\newcommand{\qed}{\hfill \ensuremath{\Box}}
\newenvironment{proof}{\vspace{1ex}\noindent{\it Proof.}\hspace{0.5em}}
	{\hfill\qed\vspace{1ex}}
\newtheorem{theorem}{Theorem}[section]
\newtheorem{lemma}[theorem]{Lemma}
\newtheorem{proposition}[theorem]{Proposition}
\newtheorem{corollary}[theorem]{Corollary}
\newtheorem{definition}[theorem]{Definition}
\DeclareMathOperator{\Gal}{\operatorname{Gal}}
\DeclareMathOperator{\Q}{\mathbf{Q}}
\DeclareMathOperator{\R}{\mathrm{R}}
\DeclareMathOperator{\Z}{\mathbf{Z}}
\DeclareMathOperator{\Ps}{\mathbf{P}}
\DeclareMathOperator{\N}{\mathbf{N}}
\DeclareMathOperator{\Spec}{\operatorname{Spec}}
\DeclareMathOperator{\Frac}{\operatorname{Frac}}
\DeclareMathOperator{\Id}{\mathrm{Id}}
\DeclareMathOperator{\Og}{\mathcal{O}}
\DeclareMathOperator{\Pic}{\mathrm{Pic}}
\DeclareMathOperator{\et}{\acute{\mathrm{e}}{\mathrm{t}}}
\DeclareMathOperator{\fppf}{\mathrm{fppf}}
\DeclareMathOperator{\Res}{\mathrm{Res}}
\DeclareMathOperator{\alg}{{^\mathrm{alg}}}
\DeclareMathOperator{\sep}{{^\mathrm{sep}}}
\title{On Jacobians of geometrically reduced curves and their Néron models}
\author{Otto Overkamp}
\date{}
\begin{document}
\maketitle 

{\abstract{We study the structure of Jacobians of geometrically reduced curves over arbitrary (i. e., not necessarily perfect) fields. We show that, while such a group scheme cannot in general be decomposed into an affine and an Abelian part as over perfect fields, several important structural results for these group schemes nevertheless have close analoga over imperfect fields. We apply our results to prove two conjectures due to Bosch-Lütkebohmert-Raynaud about the existence of Néron models and Néron lft-models over excellent Dedekind schemes in the special case of Jacobians of geometrically reduced curves. Finally, we prove some existence results for semi-factorial models and related objects for general geometrically integral curves in the local case.}}

\tableofcontents

\section{Introduction}
\subsection{Background}
Let $k$ be a field and let $C$ be a geometrically reduced connected proper algebraic curve over $k.$ The purpose of this article is to investigate the relationship between the structure of $C$ and the structure of its Jacobian $\Pic^0_{C/k}$ without imposing the condition that $k$ be perfect. In the case where $k$ does happen to be perfect, this has been worked out in great detail in the literature; see, for example, \cite{BLR}, Chapter 9.1. We shall begin by describing the situation over perfect ground fields; this will enable us to formulate more precise questions for the general case, which we shall subsequently answer. 
Suppose now that $k$ is a perfect field and that $C$ is a proper \it geometrically integral \rm algebraic curve over $k$. Let $\nu\colon \tilde{C}\to C$ be the normalisation morphism. By 
\cite{BLR}, p. 247, there is a unique factorisation $\tilde{C}\overset{\tilde{\nu}}{\to} C'\overset{\nu'}{\to} C$ of $\nu,$ such that $C'$ is the largest curve between $C$ and its normalisation which is universally homeomorphic to $C.$ Then we have the following
\begin{proposition} \rm (\cite{BLR}, Chapter 9.1, Propositions 9 and 10) \it The morphisms $$\Pic^0_{C/k}\overset{\nu'^\ast}{\to} \Pic^0_{C'/k} \overset{\tilde{\nu}^\ast}{\to}\Pic^0_{\tilde{C}/k}$$ are surjective in the étale topology and induce a filtration
$$0\subseteq \ker{\nu'^\ast}\subseteq \ker\nu^\ast  \subseteq \Pic^0_{C/k},$$ whose successive quotients are a smooth connected unipotent algebraic group, a torus, and an Abelian variety over $k,$ respectively. \label{IntroBLRProp}
\end{proposition}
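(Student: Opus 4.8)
The plan is to reduce everything to a single exact sequence of unit sheaves and then read off the three pieces from the local structure of the singularities. First I would work on $C$ with the short exact sequence
$$1\to\Og_C^\ast\to\nu_\ast\Og_{\tilde C}^\ast\to\mathcal Q\to1,$$
where $\mathcal Q$ is a skyscraper sheaf supported on the finite singular locus $Z\subset C$. Since $\nu$ is finite one has $\Pic_{\tilde C/k}=H^1(C,\nu_\ast\Og_{\tilde C}^\ast)$ (because $R^1\nu_\ast\Og_{\tilde C}^\ast=0$, the normalisation of a local ring being semilocal) and $H^1(C,\mathcal Q)=0$, so the long exact cohomology sequence, formed relatively over $k$ (after base change to arbitrary $k$-algebras and sheafification), shows at once that $\nu^\ast$ is surjective and identifies its kernel with the sections of $\mathcal Q$. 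Because the constant global units of $\tilde C$ are already local units on $C$, the correction term disappears and one gets
$$\ker\nu^\ast\;\cong\;\bigoplus_{x\in Z}\bigl(\nu_\ast\Og_{\tilde C}\bigr)_x^\ast\big/\Og_{C,x}^\ast$$
as a functor on $k$-algebras. As all the groups in sight are smooth (for a curve $H^2(C,\Og_C)=0$, so $\Pic_{C/k}$ is smooth), fppf-surjectivity coincides with étale-surjectivity, which gives the first assertion; the same argument applied to $\nu'$ and to $\tilde\nu$ shows that $\nu'^\ast$ and $\tilde\nu^\ast$ are surjective as well.

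Next I would determine the structure of the local contributions. Fix $x\in Z$, write $A_x=\Og_{C,x}$ with semilocal normalisation $\tilde A_x=\prod_i\tilde A_{x,i}$, one factor per branch $y_i$ over $x$, and filter $\tilde A_x^\ast/A_x^\ast$ by the conductor. The successive quotients coming from the $1+\mathfrak m$-parts are vector groups and contribute a smooth connected unipotent group, while the top quotient $\prod_i\kappa(y_i)^\ast/\kappa(x)^\ast$ is the Weil restriction $\prod_i\mathrm R_{\kappa(y_i)/\kappa(x)}\Gm$ modulo the diagonal $\Gm$. Because $k$ is perfect the residue extensions $\kappa(y_i)/\kappa(x)$ are separable, so this top quotient is a (in general non-split) torus. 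Hence each local contribution is an extension of a torus by a smooth connected unipotent group, and therefore so is $\ker\nu^\ast$; being connected and affine it lies in $\Pic^0_{C/k}$, and over the perfect field $k$ such an extension of commutative affine groups splits as a product $T\times U$.

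It then remains to match this torus–unipotent decomposition to the geometric factorisation through $C'$. The curve $C'$ is the largest intermediate curve universally homeomorphic to $C$; over a perfect field a universal homeomorphism of reduced curves neither separates branches nor enlarges residue fields, so the step $\nu'\colon C'\to C$ records exactly the infinitesimal thickening of each singularity (the $1+\mathfrak m$-part), whereas the separation of branches and the separable residue extensions are concentrated in $\tilde\nu\colon\tilde C\to C'$. Running the same exact-sequence computation for $\nu'$ and $\tilde\nu$ therefore yields that $\ker\nu'^\ast$ is unipotent and $\ker\tilde\nu^\ast$ is a torus. Since $\nu^\ast=\tilde\nu^\ast\circ\nu'^\ast$ with $\nu'^\ast$ surjective, the morphism $\nu'^\ast$ maps $\ker\nu^\ast$ onto $\ker\tilde\nu^\ast$ with kernel $\ker\nu'^\ast$, giving the filtration together with the identifications
$$\ker\nu'^\ast=U,\qquad \ker\nu^\ast/\ker\nu'^\ast\cong\ker\tilde\nu^\ast=T,\qquad \Pic^0_{C/k}/\ker\nu^\ast\cong\Pic^0_{\tilde C/k}.$$
The last quotient is the Jacobian of the smooth proper geometrically integral curve $\tilde C$, hence an Abelian variety, and the three successive quotients are as claimed.

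I expect the main obstacle to be this third step: proving that $C'$, characterised abstractly as the maximal intermediate curve universally homeomorphic to $C$, cuts the affine kernel $\ker\nu^\ast$ precisely along the boundary between its unipotent and toric parts. This requires the local classification of the quotients $\tilde A_x^\ast/A_x^\ast$ together with a clean description of how passing to $C'$ alters each local ring, and it is exactly here that perfectness of $k$ is used, both to guarantee that universal homeomorphisms create no residue-field extensions and to split the resulting affine group as $T\times U$. The remaining points — representability of the functors, smoothness, and the passage from fppf- to étale-surjectivity — are routine but should be verified to make the filtration one of group schemes.
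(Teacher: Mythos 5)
Your argument is sound, but note that the paper itself contains no proof of this Proposition: it is quoted from \cite{BLR}, Chapter 9.1, Propositions 9 and 10, purely as motivation for the non-perfect case, so the relevant comparisons are with BLR's proof on one side and with the paper's own generalisation (Theorem \ref{Jacstructthm}) on the other. What you have reconstructed is essentially the classical BLR/Serre argument: the sequence $1\to\Og_C^\ast\to\nu_\ast\Og_{\widetilde{C}}^\ast\to\mathcal{Q}\to 1$ with $\mathcal{Q}$ a skyscraper, vanishing of $\R^1\nu_\ast\Gm$ and of $H^1(C,\mathcal{Q})$, the identification of $\ker\nu^\ast$ with the local unit quotients (the correction term vanishing because $\Gamma(\widetilde{C},\Og_{\widetilde{C}})=k$ by geometric integrality, a point that must also be checked after base change to arbitrary $k$-algebras), the conductor filtration giving the unipotent part, the branches and separable residue extensions giving the torus, and the description of $\Og_{C',x}$ as the preimage of the diagonal copy of $\kappa(x)$ in the semilocal normalisation --- this last step, as you correctly isolate, is exactly where perfectness enters, both to force universal homeomorphisms to have trivial residue extensions and to split the affine kernel as $T\times U$. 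Your two admitted gaps (relativising the cohomology sequence and pinning down $C'$) are genuine but routine and are carried out in \emph{loc. cit.}; I see no error. It is worth contrasting this with the route the paper takes for its generalisation, which is genuinely different: over a non-perfect field one can use neither $C'$ (universal homeomorphisms may enlarge residue fields inseparably, so the largest universally homeomorphic intermediate curve no longer isolates the unipotent part) nor the $T\times U$ splitting. The paper instead factors $\nu$ through the seminormalisation $C^{\mathrm{sn}},$ decomposes $C^{\mathrm{sn}}\to C$ into elementary push-outs along $\Spec\kappa(x)[\epsilon]/\langle\epsilon^2\rangle\to\Spec\kappa(x)$ (Theorem \ref{FactorisationTheorem}), extracts from each step an exact sequence (Proposition \ref{exactsequenceProp}) exhibiting $\ker\varsigma^\ast$ as a repeated extension of vector groups, and treats $\widetilde{C}\to C^{\mathrm{sn}}$ as a single push-out along a reduced Artinian scheme contributing the toric/unirational part. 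Your approach buys the perfect-field statement with minimal machinery; the paper's buys uniformity over arbitrary fields, where the correct dichotomy becomes split-unipotent versus unirational rather than unipotent versus affine.
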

The main observation here is that the filtration constructed above in terms of the morphism $\nu$ is \it intrinsic \rm to the algebraic group $\Pic^0_{C/k}.$ Indeed, since $k$ is perfect, Chevalley's theorem (\cite{CGP}, Theorem A.3.7), together with the well-known structure theory of smooth connected commutative affine algebraic groups over perfect fields (\cite{CGP}, Propositions A.1.4 and A.2.11, and \cite{SGA3}, Exposé XVII, Théorème 6.1.1 A) ii)), tells us that there is a unique exact sequence
$$0\to U\times_k T\to \Pic^0_{C/k}\to A\to 0$$ (which depends only upon $\Pic^0_{C/k}$ and not on $C$), where $U,$ $T$, and $A$ are a smooth connected unipotent algebraic group, a torus, and an Abelian variety over $k,$ respectively. In the notation of Proposition \ref{IntroBLRProp}, we have $\ker{\nu'^\ast}=U,$ $\ker\nu^\ast /\ker{\nu'^\ast}=T,$ and $\Pic^0_{C/k}/ \ker \nu^\ast =A.$ It is well-known that the factorisation $\nu=\nu'\circ \tilde{\nu},$ Chevalley's theorem, as well as most statements of the structure theory of smooth connected commutative affine algebraic groups over perfect fields, all fail if we drop the condition that $k$ be perfect. We shall see however, that there is a way of describing the structure of the Jacobian of a geometrically reduced curve over an imperfect field which closely resembles the situation over a perfect field (see Section \ref{JacSecY}). More precisely, we shall see in Theorem \ref{Jacstructthm} that, over an arbitrary field $\kappa,$ if $C^{\mathrm{sn}}$ denotes the \it seminormalisation  \rm of $C$ (see Proposition \ref{seminormalisationexprop}), we have a factorisation 
$$\widetilde{C}\overset{\widetilde{\varsigma}}{\to} C^{\mathrm{sn}}\overset{\varsigma}{\to} C$$ of the normalisation morphism $\nu\colon \widetilde{C}\to C,$ which induces a filtration
$$0\subseteq \ker \varsigma^\ast \subseteq \ker \nu^\ast \subseteq \Pic^0_{C/\kappa},$$
such that $\ker \varsigma^\ast$ equals the maximal smooth connected split unipotent group of $\Pic^0_{C/\kappa}$ (cf. \cite{CGP}, p. 63), and such that $\ker \nu^\ast$ equals the maximal smooth unirational subgroup of $\Pic^0_{C/\kappa}$ (cf. \cite{BLR}, p. 310). Observe that, over a perfect field, a smooth connected commutative algebraic group is unirational if and only if it is affine, so this result recovers the filtration in the perfect case quoted above. We shall see that $C^{\mathrm{sn}}\to C$ is still a universal homeomorphism (Proposition \ref{seminormalisationexprop}), but it is no longer the largest curve between $C$ and $\widetilde{C}$ which is universally homeomorphic to $C.$ \\
Having studied the structure of Jacobians of geometrically reduced proper curves in the general case, we apply our results in order to prove two conjectures due to Bosch-Lütkebohmert-Raynaud (\cite{BLR}, Chapter 10.3, Conjecture I and Conjecture II) for Jacobians of such curves. The crucial observation we shall use to investigate the structure of Jacobians is the Factorisation Theorem (see Theorem \ref{FactorisationTheorem}). This result will imply in particular that \it all \rm singularities of curves can be obtained by repeatedly applying a push-out construction, beginning with a regular curve. In \cite{Ov}, the author introduced a method to use the push-out construction in order to construct proper flat models of singular curves which are well-suited to studying Picard functors and Néron models of Jacobians. The Factorisation Theorem makes it possible to apply this construction to the study of the Jacobian of \it any \rm singular curve over a field. In order to make the construction from \cite{Ov} fit for our purposes, we must generalise it in several directions, which will be accomplished in Paragraphs \ref{pushoputpara}, \ref{factoristionpara}, and Subsection \ref{twopicsubsec}. By constructing suitable proper flat models of singular curves over Dedekind schemes using the push-out construction, we prove
\begin{theorem} \rm (Conjecture II; Theorem \ref{ConjIIthm}) \it
Let $S$ be an excellent Dedekind scheme with field of rational functions $K.$ Let $C$ be a proper geometrically reduced curve over $K.$ Assume that $\Pic^0_{C/K}$ contains no closed subgroups which are unirational. Then $\Pic^0_{C/K}$ admits a Néron model over $K,$ \label{ConjIIthmInt}
\end{theorem}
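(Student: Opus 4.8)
The plan is to realise the desired Néron model as a subquotient of the relative Picard functor of a carefully chosen proper flat model of $C$ over $S$, and to use the hypothesis to force that model to be of finite type. I first read off what the hypothesis gives structurally. By Theorem \ref{Jacstructthm}, $\ker\nu^\ast$ is the maximal smooth unirational subgroup of $\Pic^0_{C/K}$; since by assumption there is no nontrivial unirational closed subgroup, we get $\ker\nu^\ast=0$, and a fortiori $\ker\varsigma^\ast=0$. As $\nu^\ast$ is surjective in the étale topology and now has trivial kernel, it is an isomorphism $\Pic^0_{C/K}\xrightarrow{\sim}\Pic^0_{\tilde C/K}$. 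The same hypothesis excludes any nontrivial subtorus, any copy of $\Gm$, and any copy of $\Ga$ (all of these are unirational), so the only possible non-proper contribution to $\Pic^0_{C/K}$ is a $K$-wound smooth connected unipotent group; this is precisely the bounded, finite-component-group situation that the argument must exploit.

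Next I construct the model. Because $S$ is excellent, a proper flat model $\tilde{\mathcal C}\to S$ of $\tilde C$ is a two-dimensional excellent scheme and so admits a resolution of singularities (Lipman), yielding a regular proper flat model which, after a suitable further modification, may be assumed cohomologically flat in degree zero (Raynaud). For such a model Raynaud's theorem identifies the separated quotient of $\Pic^0_{\tilde{\mathcal C}/S}$ with the identity component of the Néron model of $\Pic^0_{\tilde C/K}$. To pass from $\tilde C$ to the (possibly singular) $C$ uniformly over $S$, I invoke the Factorisation Theorem (Theorem \ref{FactorisationTheorem}): the normalisation $\tilde C\to C$ is an iterated push-out along closed subschemes, and the generalised push-out and two-variable Picard machinery of Paragraphs \ref{pushoputpara}--\ref{twopicpara} (extending \cite{Ov}) spreads these data over $S$, producing a proper flat model $\mathcal C\to S$ together with an exact sequence relating $\Pic_{\mathcal C/S}$ to $\Pic_{\tilde{\mathcal C}/S}$ and a smooth affine conductor group supported on the push-out locus.

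Combining these ingredients, I would assemble a candidate Néron model of $\Pic^0_{C/K}$ from the Néron model of $\Pic^0_{\tilde C/K}$ and the affine contribution of the conductor, and verify the Néron mapping property. Smoothness and separatedness follow from the construction, while the universal extension property is checked against the universal properties of the Picard functors and Raynaud's identification, reducing each test morphism out of a smooth $S$-scheme to the corresponding statement for $\tilde{\mathcal C}$. Outside a finite closed subset of $S$ the model $\tilde{\mathcal C}$ is smooth and $\Pic^0_{C/K}$ has good reduction, so the candidate is there an abelian scheme; it then remains to glue in the finitely many local Néron models at the exceptional points, which exist by the local theory of \cite{BLR}, Chapter 10, after passing to the excellent local rings (and, if convenient, their completions, descending afterwards).

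The main obstacle is showing that the glued object is genuinely of \emph{finite} type over $S$; this is exactly the content of Conjecture II beyond the local case, and the only place where the hypothesis is indispensable. Finite type-ness amounts to finiteness of the component group of every special fibre together with good reduction outside a finite set. A subtorus or a $\Gm$-quotient would force an infinite component group (as already for $\Gm$, whose Néron lft-model has component group $\Z$), and a $\Ga$-subgroup would destroy boundedness entirely; the assumption of no unirational subgroup rules out both, leaving only $K$-wound unipotent and abelian parts, which are bounded and have finite component groups. The delicate point is to make this uniform over all of $S$ rather than place by place: one must prove that the push-out data, and hence the affine conductor groups, contribute only finitely many non-trivial components globally, so that the resulting object is quasi-compact. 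Deriving this global boundedness from the single hypothesis on the generic fibre is the heart of the argument.
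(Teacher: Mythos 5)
Your opening reduction is exactly the paper's: by Theorem \ref{Jacstructthm} (equivalently Corollary \ref{Jacstructcor}), the hypothesis $\mathrm{uni}(\Pic^0_{C/K})=0$ forces $\nu^\ast\colon \Pic^0_{C/K}\to\Pic^0_{\widetilde C/K}$ to be an isomorphism, so it suffices to produce a N\'eron model of the Jacobian of the regular curve $\widetilde C.$ But you then fail to exploit your own reduction. Since $\ker\nu^\ast=0,$ there is no ``affine contribution of the conductor'' to assemble, and the entire push-out/Factorisation-Theorem machinery you re-introduce is needed only for Conjecture I (Theorem \ref{ConjIthm}), not here; invoking it creates obligations you never discharge (for instance, cohomological flatness of the model is obtained in the paper not by a ``suitable further modification'' but by first passing to a finite separable extension of $K$ so that every component of $\widetilde C$ acquires a rational point, which forces a section through the smooth locus of the regular model).

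The genuine gap is finite-typeness. You yourself call the global boundedness ``the heart of the argument'' and then do not prove it; instead you gesture at controlling conductor groups (which are trivial under the hypothesis) and at gluing in ``local N\'eron models at the exceptional points, which exist by the local theory of \cite{BLR}, Chapter 10.'' That appeal is circular: the local existence of a finite-type N\'eron model under the no-unirational-subgroup hypothesis is precisely Conjecture II over a discrete valuation ring, which \cite{BLR}, Chapter 10 establishes only for groups admitting a regular compactification (Chapter 10.3, Theorem 5) --- unavailable for these Jacobians, as the paper notes. The paper closes this gap concretely: $P^{\mathrm{sep}}_{\widetilde{\mathscr C}/S}$ is a N\'eron lft-model by \cite{BLR}, Chapter 9.5, Theorem 4 combined with Lemma \ref{localgloballem}, and it is of finite type by \cite{BLR}, Chapter 10.1, Corollary 10, because the component groups at all closed points are finite (again Chapter 9.5, Theorem 4) and, over the dense open $U\subseteq S$ where the regular model has geometrically integral fibres, the special fibres of the N\'eron model are connected (Chapter 9.5, Theorem 1). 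Note also that your supporting claim that the candidate is an abelian scheme over a dense open of $S$ is false in the situation of interest: for imperfect $K$ the regular curve $\widetilde C$ need not be smooth, $\Pic^0_{\widetilde C/K}$ may have a nontrivial wound unipotent part, and then no fibre of the N\'eron model is proper.
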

as well as  
\begin{theorem} \rm (Conjecture I; Theorem \ref{ConjIthm}) \it 
Let $S$ be an excellent Dedekind scheme with field of rational functions $K.$ Let $C$ be a proper geometrically reduced curve over $K,$ and suppose that $\Pic^0_{C/K}$ contains no closed subgroup isomorphic to $\mathbf{G}_{\mathrm{a}, K}.$ Then $\Pic^0_{C/K}$ admits a Néron lft-model over $S.$  \label{ConjIthmInt}
\end{theorem}
These two Conjectures were previously known for smooth connected algebraic groups of dimension 1 (\cite{LiuTong}, Corollary 7.8, Remark 7.9). Moreover, Conjecture II is known for smooth connected algebraic groups which admit a regular compactification (\cite{BLR}, Chapter 10.3, Theorem 5). While compactifications of Jacobians have been studied by many authors, there do not seem to be any results on \it regular \rm compactifications of Jacobians in positive characteristic which are general enough for our purposes.\\
Finally, we use the techniques developed in this article in order to construct \it semi-factorial models \rm (cf. \cite{Pép}) and \it Néron-Picard models \rm of geometrically integral (possibly singular) curves. This will allow us to write the Néron model of the Jacobian of a geometrically integral seminormal curve in terms of the Picard functor of a particular proper flat model of the curve, which generalises earlier well-known results for regular curves. \\
\\
$\mathbf{Acknowledgement}.$ The author would like to thank the Mathematical Institute of the University of Oxford, where this paper was written, for its hospitality. He would like to express his gratitude to Professor D. Rössler for helpful conversations and to Professor Q. Liu for bringing the paper \cite{LiuTong} to his attention. Moreover, the author was supported by the German Research Foundation (Deutsche Forschungsgemeinschaft; Geschäftszeichen OV 163/1-1, Projektnummer 442615504), for whose contribution he is most grateful. Finally, the author would like to thank the referees for their very careful reading of this paper and for making a large number of suggestions which led to considerable improvements. Several proofs have been simplified (in particular, those of Theorems \ref{FactorisationTheorem} and \ref{Jacstructthm}), and the exposition improved significantly as a consequence of those suggestions. 

\subsection{Notation and conventions}
We fix some notation and state precisely a few definitions which are not applied uniformly in the literature. 
\begin{itemize}
\item When we speak of \it algebraic spaces, \rm we use Definition 4 from \cite{BLR}, Chapter 8.3, which goes back to Knutson. In particular, an algebraic space $\mathscr{X}\to S$ over a scheme $S$ is, by definition, locally of finite presentation and locally separated over $S.$ Note that the definition of an algebraic space used in \cite{Stacks} is more general. 
\item Let $S$ be a scheme. We shall say that an effective Cartier divisor $D$ on $S$ has \it strict normal crossings \rm if it has strict normal crossings in the sense of \cite{Stacks}, Tag 0BI9. In particular, a divisor with strict normal crossings is reduced. The reader should bear in mind that the terminology used in \cite{Liu}, Chapter 9.1, Definition 1.6 is different: a divisor is said to have \it normal crossings \rm in \it op. cit. \rm if and only if it is \it supported on a strict normal crossings divisor \rm in the sense of \cite{Stacks}, Tag 0CBN. 
\item A \it Dedekind scheme \rm is a regular separated quasi-compact scheme of pure dimension 1. Unless indicated otherwise, we shall assume that Dedekind schemes are connected. 
\item Let $S$ be a Dedekind scheme with field of rational functions $K$ and let $G$ be a smooth commutative group scheme over $K.$ A \it Néron model \rm is a smooth separated model $\mathscr{G}\to S$ of $G$ which satisfies the Néron mapping property (\cite{BLR}, Chapter 1.2, Definition 1), and which is \it of finite type over $S.$ \rm A \it Néron lft-model \rm of $G$ is a smooth separated model $\mathscr{G}\to S$ of $G$ which satisfies the Néron mapping property. This means that we follow the terminology of \cite{BLR} (\it op. cit., \rm Chapter 1.2, Definition 1 and Chapter 10.1, Definition 1) . Some authors use the terms \it Néron model \rm and \it Néron lft-model \rm interchangeably.
\item For a morphism of schemes $X\to Y,$ we denote by $X^{\mathrm{sm}}$ the set of points of $X$ at which the morphism is smooth. This is an open subset of $X$ almost by definition (\cite{Stacks}, Tag 01V5). 
\item For a field $k,$ we denote by $k\sep$ a choice of separable closure of $k.$ 
\end{itemize}

\section{The structure of Jacobians over general fields}
\label{JacSecY}
\subsection{Classification of prime algebras}

Let $\kappa$ be an arbitrary field. In this subsection, we shall classify what we call \it prime algebras \rm over $\kappa,$ generalising a result from \cite{Stacks}. 

\begin{definition}
Let $A$ be an algebra over $\kappa.$ We say that $A$ is a \rm prime algebra \it over $\kappa$ if the map $\kappa \to A$ is not surjective (in particular, $A\neq0$), and the only $\kappa$-subalgebras of $A$ are $\kappa$ and $A$.
\end{definition}
In \cite{Stacks}, Tag 0C1I, it is shown that, if $\kappa$ is algebraically closed (hence perfect), then any prime algebra over $\kappa$ is isomorphic to $\kappa\times\kappa$ or to $\kappa[\epsilon]/\langle \epsilon^2\rangle.$ We shall now generalise this result: 

\begin{proposition}  \label{Primeclassprop}
Let $\kappa$ be an arbitrary field and let $A$ be a prime algebra over $\kappa.$ Then $A$ is isomorphic, as a $\kappa$-algebra, to precisely one of the following: \\
(i) $\kappa[\epsilon]/\langle \epsilon^2 \rangle,$\\
(ii) $\kappa\times \kappa,$\\
(iii) $\kappa(a^{1/p})$ where $p=\mathrm{char}\, \kappa>0$ and $a\in k\backslash k^p,$\\ 
(iv) a finite non-trivial separable extension of $\kappa$ with no proper subextensions.\\
In particular, the map $\Spec A \to \Spec \kappa$ is an homeomorphism with trivial residue field extension if and only if $A$ is of type (i).
\end{proposition}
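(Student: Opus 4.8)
The plan is to show first that a prime algebra $A$ is necessarily monogenic and finite-dimensional over $\kappa$, and then to carry out a short case analysis governed by the idempotents and the nilpotents of $A$. For the reduction, note that if $x\in A$ is any element not in the image of $\kappa$, then $\kappa[x]$ is a $\kappa$-subalgebra strictly larger than $\kappa$, so by the definition of a prime algebra $\kappa[x]=A$; thus $A$ is generated by any single non-scalar element. If such an $x$ were transcendental over $\kappa$, then $\kappa[x^2]$ would be a subalgebra lying strictly between $\kappa$ and $A$ (it does not contain $x$), contradicting primeness. Hence every non-scalar $x$ is algebraic over $\kappa$, and $A=\kappa[x]$ is a commutative $\kappa$-algebra of finite dimension $n:=\dim_\kappa A\ge 2$.

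Next I would split off the case in which $A$ possesses a nontrivial idempotent $e$, meaning $e\ne 0,1$. Since a scalar idempotent can only be $0$ or $1$, such an $e$ is not in the image of $\kappa$, so $\kappa[e]=\kappa\oplus\kappa e\cong\kappa\times\kappa$ is a two-dimensional subalgebra; by primeness it equals $A$, and we land in case (ii). In the remaining situation $A$ has no nontrivial idempotents, so, being a finite-dimensional commutative algebra, it is local; let $\mathfrak{m}$ denote its maximal ideal.

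Now I distinguish according to whether $\mathfrak{m}$ vanishes. If $\mathfrak{m}\ne 0$, then $\mathfrak{m}$ is nilpotent, and choosing a nonzero $x$ in the last nonvanishing power of $\mathfrak{m}$ yields $x^2=0$; as the scalars in $\mathfrak{m}$ are zero, $x$ is non-scalar, so $\kappa[x]=\kappa\oplus\kappa x\cong\kappa[\epsilon]/\langle\epsilon^2\rangle$ is two-dimensional and therefore equal to $A$, giving case (i). If $\mathfrak{m}=0$, then $A=L$ is a finite field extension of $\kappa$; since any $\kappa$-subalgebra of $L$ is a finite-dimensional domain over $\kappa$ and hence a field, primeness says precisely that $L/\kappa$ admits no intermediate fields. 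The separable closure of $\kappa$ in $L$ is such an intermediate field, so it is either all of $L$, in which case $L/\kappa$ is separable and we obtain case (iv), or equal to $\kappa$, in which case $L/\kappa$ is purely inseparable and $\mathrm{char}\,\kappa=p>0$. In the latter case I pick $\alpha\in L\setminus\kappa$ and the least $t\ge 1$ with $\alpha^{p^t}\in\kappa$; then $\beta:=\alpha^{p^{t-1}}$ lies outside $\kappa$ and satisfies $\beta^p=:a\in\kappa\setminus\kappa^p$, so $\kappa(\beta)$ is an intermediate field of degree $p$, which by the absence of intermediate fields must be all of $L$. Thus $L\cong\kappa(a^{1/p})$, which is case (iii).

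Finally, to confirm that $A$ falls into \emph{precisely} one of the four classes, I would observe that case (i) is the unique non-reduced possibility, that among the reduced algebras case (ii) is the only one that is not a domain, and that of the two remaining field extensions case (iii) is inseparable over $\kappa$ whereas case (iv) is separable, so no two classes can coincide. I expect the only genuinely delicate point to be the purely inseparable analysis leading to case (iii) --- that is, showing that the absence of intermediate fields pins the inseparable degree down to exactly $p$ --- while the passage to a monogenic finite-dimensional algebra and the idempotent/nilpotent dichotomy follow quickly from the strong minimality encoded in the definition of a prime algebra.
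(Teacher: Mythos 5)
Your proof is correct, but it takes a genuinely different route through the case analysis than the paper does. The paper splits first on reducedness: a non-zero nilpotent $\epsilon$ must generate $A$, and primeness forces $\epsilon^2=0$, giving case (i); in the reduced case it invokes the structure theorem for Artinian rings to write $A\cong A_1\times\cdots\times A_r$ with each $A_j$ a finite field extension of $\kappa$, shows $r\leq 2$, forces $A_1=A_2=\kappa$ when $r=2$, and is then left with the field case when $r=1$. You instead begin by proving that a prime algebra is monogenic and finite-dimensional (a transcendental generator $x$ would make $\kappa[x^2]$ a subalgebra strictly between $\kappa$ and $A$), and then split on idempotents: a nontrivial idempotent $e$ gives $\kappa[e]\cong\kappa\times\kappa$ and hence case (ii), while otherwise $A$ is local, where $\mathfrak{m}\neq 0$ yields case (i) via an element of the last nonvanishing power of $\mathfrak{m}$, and $\mathfrak{m}=0$ yields the field case. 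The treatment of the field case --- the separable closure forcing the extension to be separable or purely inseparable, and primeness pinning the inseparable degree to exactly $p$ --- is essentially the same in both arguments, though you reach an element with $p$-th power in $\kappa$ by iterating Frobenius, whereas the paper applies primeness directly to $\kappa(\alpha^p)$. Your organization buys two things: the explicit finite-dimensionality step justifies the Artinian-ness that the paper asserts without proof (its ``a fortiori an Artinian ring''), and the idempotent/local dichotomy avoids the bookkeeping with the number $r$ of factors in the product decomposition. The paper's organization, conversely, dispatches the non-reduced case in two lines and keeps the entire reduced analysis in a single uniform argument.
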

\begin{proof}
A prime algebra $A$ over $\kappa$ must be finite  over $\kappa.$ Indeed, any element in $A$ not contained in $\kappa$ gives rise to a surjective morphism $\varpi\colon \kappa[t]\to A.$ If $\ker \varpi=0,$ then $\varpi$ is an isomorphism. However, $\varpi(\kappa[t^2])$ is then a proper subalgebra of $A$ not equal to $k.$ Hence $\ker\varpi$ is generated by a non-zero polynomial of degree $m,$ say. Then $\dim_{\kappa}A=m.$ Assume first that $A$ is non-reduced. Let $y$ be a non-zero nilpotent element of $A.$ Then $A=\kappa[y].$ If $y^2\neq0,$ then $y^2$ generates a proper subalgebra of $A.$ Hence $A=\kappa[y]\cong\kappa[\epsilon]/\langle \epsilon^2 \rangle.$
Now suppose that $A$ is reduced. Because $A$ is finite over $\kappa$ and hence \it a fortiori \rm an Artinian ring, we can write $A$ as
$$A\cong A_1\times ...  \times A_r$$ for some $r\geq 1,$ where the $A_j$ are finite field extensions of $\kappa.$ If $r\geq 2,$ we let $\Delta$ be the image of the map $\kappa\to A_1\times A_2.$ If $r>2,$ then $\Delta\times A_3\times...\times A_r$ is a proper subalgebra of $A.$ Hence we must have $r\leq 2.$ If $r=2,$ we claim that we must have $A_1=\kappa=A_2.$ Indeed, otherwise we may assume without loss of generality that $\kappa\subseteq A_1$ is a proper inclusion, in which case $\kappa\times A_2$ would be a proper subalgebra of $A.$ Hence $A=\kappa\times \kappa.$ Finally, suppose that $r=1.$ Then $A=A_1$ is a finite field extension of $\kappa.$ Observe that $A$ must be either separable or purely inseparable over $\kappa,$ for otherwise the separable closure of $\kappa$ in $A$ would be a proper subalgebra. In the latter case, choose $\alpha\in A\backslash \kappa.$ Then $A=\kappa(\alpha).$ If $p:=\mathrm{char}\, \kappa$ and $\alpha^p\not\in \kappa,$ then $\alpha^p$ would generate a proper subextension of $\kappa\subseteq A.$ Hence $a:=\alpha^p\in \kappa$ and $A=\kappa(a^{1/p}).$ In the former case, $A$ is a finite separable extension of $\kappa$ which is non-trivial and admits no proper subextensions by assumption. 
\end{proof}\\
$\mathbf{Remark.}$ If $\kappa$ is separably closed, then the Proposition above gives a complete classification of prime algebras over $\kappa$ in the sense that we can give a precise description, in terms of generators and relations, of each prime algebra. Unfortunately, the problem of giving such a description for a general prime algebra over a field $\kappa$ which is not separably closed seems to be an intractable problem, even if $\mathrm{char}\, \kappa=0.$ For example, it is not the case that a finite separable extension $\kappa\subseteq L$ which is non-trivial and admits no proper subextensions must have prime degree. Indeed, let $\kappa=\Q$ and let $L$ be a finite Galois extension with $\Gal(L/\Q)\cong A_4.$ It is well-known that such extensions exist. Moreover, it is an elementary exercise to show that $A_4$ contains no subgroup of order 6. In particular, any subgroup of $A_4$ generated by a 3-cycle is maximal. Let $A$ be the subextension of $\Q\subseteq L$ which corresponds to such a subgroup under the Galois correspondence. By Galois theory, $A$ is a prime algebra over $\Q$ of degree 4. If one replaces $A_4$ by $A_5,$ one can even construct examples where the degree has more than one prime factor. 

\subsection{Some results on unirational algebraic groups}
Let $G$ be a smooth connected affine commutative algebraic group over an arbitrary field $\kappa.$ It is well-known (\cite{SGA3}, Exposé XIV, Corollaire 6.10) that, if $\kappa$ is perfect, then $G$ is unirational by (i. e., there is a dominant morphism $U \to G$ with $U$ an open subscheme of $\mathbf{A}^n_{\kappa}$ for some $n\in \N$). It does not seem to be known whether a commutative extension of two commutative unirational algebraic groups is again unirational (but see \cite{Achet}, Section 2.4, for some results in this direction). The following result is known (see, e. g., \cite{Borel}, Chapter V, Corollary 15.8). We give a proof in the commutative case for the reader's convenience: 
\begin{lemma}
Let $\kappa$ be an arbitrary field and let $G,$ $G',$ and $G''$ be smooth, connected, commutative algebraic groups over $\kappa.$ Assume that $G''$ is unirational, and that $G'$ is split unipotent (i. e., that $G'$ admits a composition series whose successive quotients are isomorphic to $\mathbf{G}_{\mathrm{a}, \kappa}$). Moreover, assume that there exists an exact sequence
$$0\to G'\to G\to G''\to 0$$ in the fppf-topology over $\kappa.$ Then $G$ is unirational.  \label{unirationallemma}
\end{lemma}

\begin{proof}
We prove the statement by induction on $\dim G'.$ If $\dim G'=1,$ then $G'\cong \mathbf{G}_{\mathrm{a}, \kappa}.$ It is well-known that the canonical morphism $H^1_{\mathrm{Zar}} (G'',\mathbf{G}_{\mathrm{a}, \kappa})\to H^1_{\fppf}(G'', \mathbf{G}_{\mathrm{a}, \kappa})$ is an isomorphism (\cite{Stacks}, Tag 03P2, which applies because $\mathbf{G}_{\mathrm{a}, \kappa} = \Og_{G''} ^  a$ in the terminology of \it loc. cit.\rm). Moreover, $G''$ is affine (since it is unirational\footnote{That $G''$ is affine can be checked over an algebraic closure $\kappa\alg$ of $\kappa.$ Then we have an exact sequence $0 \to L\to G''_{\kappa\alg} \to E \to 0,$ where $L$ is a smooth connected affine algebraic group over $\kappa\alg$  and $E$ an Abelian variety over that same field (\cite{CGP}, Theorem A.3.7). Since $G''_{\kappa\alg}$ is unirational, so is $E.$ However, it is a standard fact that there are no non-constant rational functions from $\Ps^1_{\kappa\alg}$ to $E$ (see the proof of Proposition \ref{unitrivialprop} below), so $E=0$ (\cite{BLR}, Chapter 10.3, Theorem 1) and $G''_{\kappa\alg}$ is affine.}), so $H^1_{\mathrm{Zar}}(G'', \mathbf{G}_{\mathrm{a}, \kappa})=H^1(G'', \Og_{G''})=0.$ In particular, the $\mathbf{G}_{\mathrm{a}, \kappa}$-fibration $G\to G''$ is trivial in the Zariski topology. This implies that $G$ is isomorphic (as a scheme) to $\mathbf{G}_{\mathrm{a}, \kappa}\times_\kappa G'',$ which clearly means that $G$ is unirational.\\
Now consider the general case. Because $G'$ is a repeated extension of $\mathbf{G}_{\mathrm{a}, \kappa},$ we can find a closed immersion $\mathbf{G}_{\mathrm{a}, \kappa}\to G'.$ Consider the exact sequences 
$$0\to G'/\mathbf{G}_{\mathrm{a}, \kappa}\to G/\mathbf{G}_{\mathrm{a}, \kappa}\to G''\to 0$$
and
$$0\to \mathbf{G}_{\mathrm{a}, \kappa}\to G\to G/\mathbf{G}_{\mathrm{a}, \kappa}\to 0.$$
By the induction hypothesis, we know that $G/\mathbf{G}_{\mathrm{a}, \kappa}$ is unirational from the first exact sequence. The same argument as above now shows that $G$ is unirational, using the second exact sequence. 
\end{proof}
\subsection{Néron models over Dedekind schemes}
Let $S$ be a Dedekind scheme with field of rational functions $K.$ Let $g\colon \mathscr{G}\to S$ be a smooth separated group scheme over $S.$ If $R$ is a discrete valuation ring and $S=\Spec R,$ then there is a convenient criterion which allows us to check whether $\mathscr{G}$ is the Néron lft-model of its generic fibre: by \cite{BLR}, Chapter 10.1, Proposition 2, this is the case if and only if for all local extensions $R\subseteq R'$ with $R'$ of discrete valuation rings essentially smooth over $R$ and $K':=\Frac R',$ the canonical map $\mathscr{G}(R')\to \mathscr{G}(K')$ is surjective. We shall need a slightly stronger criterion (see Proposition \ref{strongcritprop} below). Moreover, we shall consider the case where $S$ is allowed to have infinitely many closed points. \\
Recall that a local extension $R\subset R'$ of discrete valuation rings is said to be \it of ramification index one \rm if the maximal ideal of $R$ generates that of $R'$ and, moreover, the induced extension of residue fields is separable (i. e., geometrically reduced; we are not assuming that it is algebraic). 

\begin{proposition} Suppose that $S=\Spec R$ for some discrete valuation ring $R$ and that $g\colon \mathscr{G}\to S$ is a smooth separated group scheme. Then the following are equivalent: \\
(i) $\mathscr{G}$ is the Néron lft-model of its generic fibre,\\
(ii) for all local extensions $R\subseteq R'$ of discrete valuation rings of ramification index one with $K':=\Frac R'$ and $R'$ \rm strictly Henselian, \it the canonical map $\mathscr{G}(R')\to \mathscr{G}(K')$ is surjective, and \\
(iii) the canonical map $\mathscr{G}(R')\to \mathscr{G}(K')$ is surjective for all local extensions $R\subseteq R'$ such that $K'=\Frac R'$ and such that there is a filtration $R\subseteq R''\subseteq R'$ with $R''$ a discrete valuation ring essentially smooth over $R$ and such that $R''\subseteq R'$ is a strict Henselisation. \label{strongcritprop}
\end{proposition}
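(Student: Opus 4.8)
The plan is to reduce the claimed criterion to the known one from \cite{BLR}, Chapter 10.1, Proposition 2, which characterises N\'eron lft-models in terms of surjectivity of $\mathscr{G}(R')\to\mathscr{G}(K')$ for all local extensions $R\subseteq R'$ with $R'$ essentially smooth over $R$. The essential smoothness condition is exactly the condition that $R\subseteq R'$ be of ramification index one together with the residue extension being separably generated; so the main task is to show that surjectivity for the smaller class of extensions considered here (ramification index one, $R'$ strictly Henselian, or even factoring through a strict Henselisation of an essentially smooth extension) already forces surjectivity for the whole class appearing in the \cite{BLR} criterion.

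The key step is a filtration/factorisation argument. Given an arbitrary local extension $R\subseteq R'$ of ramification index one, I would first pass to a strict Henselisation. Since strict Henselisation is a filtered colimit of essentially \'etale extensions and preserves ramification index one, the map $\mathscr{G}(R')\to\mathscr{G}(K')$ can be studied after replacing $R'$ by its strict Henselisation $R'^{sh}$; the point to check is that if a $K'$-point lifts over $R'^{sh}$, then it already lifts over $R'$, which follows from the N\'eron mapping property applied to the essentially \'etale intermediate extensions (\'etale descent of sections, using smoothness and separatedness of $\mathscr{G}$). Next, for a strictly Henselian target I would produce an intermediate ring $R''$ with $R\subseteq R''$ essentially smooth and $R''\subseteq R'$ a strict Henselisation: concretely, one lifts a separating transcendence basis of the residue extension to units/parameters, adjoins them to obtain an essentially smooth $R''$ realising the residue field extension up to a separable algebraic part, and then observes that the remaining extension $R''\subseteq R'$ is ind-\'etale and local with trivial residue extension, i.e.\ a strict Henselisation. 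This is precisely the filtration $R\subseteq R''\subseteq R'$ asserted in the final sentence of the Proposition.

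With the filtration in hand, the logic runs as follows. Surjectivity of $\mathscr{G}(R')\to\mathscr{G}(K')$ in the restricted (strictly Henselian, or filtered-through-essentially-smooth) case is the hypothesis; I must deduce surjectivity of $\mathscr{G}(\tilde R)\to\mathscr{G}(\tilde K)$ for an arbitrary essentially smooth $R\subseteq\tilde R$ in order to invoke \cite{BLR}. Given a $\tilde K$-point $x$, base-change to the strict Henselisation $\tilde R^{sh}$, where by hypothesis $x$ extends to an $\tilde R^{sh}$-point; then descend this integral point back down to $\tilde R$ using smoothness and separatedness of $\mathscr{G}$ together with the fact that $\tilde R\to\tilde R^{sh}$ is faithfully flat and ind-\'etale, so that a section over the cover which agrees on the relevant fibre product descends (the section is automatically unique by separatedness, hence glues). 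The converse direction of the ``if and only if'' is immediate, since the restricted class of extensions is contained in the essentially smooth class, so the N\'eron mapping property trivially yields the required surjectivity.

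The hard part will be the descent step: verifying that an integral point defined only over the strictly Henselian (or essentially \'etale) cover actually descends to a point over the base ring, since $\mathscr{G}$ is merely locally of finite type and separated rather than proper, so one cannot appeal to valuative-criterion properness. The resolution is that $\mathscr{G}$ is smooth and separated and the cover is ind-\'etale and faithfully flat, so \'etale descent of morphisms to a separated scheme applies: the two pullbacks of the section to $\tilde R^{sh}\otimes_{\tilde R}\tilde R^{sh}$ agree because they agree generically (both restrict to the pullback of $x$) and $\mathscr{G}$ is separated over a reduced base, whence the descended section exists and is unique. Care is needed to ensure the intermediate ring $R''$ is genuinely essentially smooth (finitely generated up to localisation, flat, with geometrically regular fibres) and that the residue extension is handled correctly when it is separable algebraic but non-trivial, which is where the ramification-index-one hypothesis is used in full.
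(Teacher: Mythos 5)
Your third paragraph is the actual proof of the main implication, and it is correct; moreover it is the same reduction the paper uses: to verify the criterion of \cite{BLR}, Chapter 10.1, Proposition 2, take an essentially smooth local extension $R\subseteq \tilde{R}$ of discrete valuation rings, pull a given $\tilde{K}$-point back to $\tilde{K}^{\mathrm{sh}}$, extend it over $\tilde{R}^{\mathrm{sh}}$ by the hypothesis (the filtration $R\subseteq \tilde{R}\subseteq \tilde{R}^{\mathrm{sh}}$ is exactly of the allowed shape), and descend the integral point from $\tilde{R}^{\mathrm{sh}}$ to $\tilde{R}$. The only divergence is the descent step: you use fpqc descent along the faithfully flat ind-\'etale cover $\tilde{R}\to\tilde{R}^{\mathrm{sh}}$, while the paper argues directly, choosing an open affine $U\subseteq\mathscr{G}$ containing the image of the closed point and observing that the ring map $\Gamma(U,\Og_U)\to\tilde{R}^{\mathrm{sh}}$ has image in $\tilde{R}^{\mathrm{sh}}\cap\tilde{K}=\tilde{R}$, so the point is already defined over $\tilde{R}$. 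Both mechanisms work; note only that in your cocycle verification the operative property is not reducedness but flatness: $\tilde{R}^{\mathrm{sh}}\otimes_{\tilde{R}}\tilde{R}^{\mathrm{sh}}$ is $\tilde{R}$-flat, so a uniformizer is a nonzerodivisor, the generic fibre is schematically dense, and the equalizer of the two pullbacks (closed, by separatedness) must therefore be everything.

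There are, however, two genuine defects. First, your converse direction is wrong as stated: a strictly Henselian local extension of ramification index one is not contained in the essentially smooth class. A strict Henselisation is not even essentially of finite type over $R$ (its residue field is separably closed, hence in general not finitely generated over that of $R$), so the N\'eron mapping property cannot be invoked ``trivially.'' For strict Henselisations of essentially smooth rings one could patch this with a limit argument, since $\mathscr{G}$ is locally of finite presentation and $R''^{\mathrm{sh}}$ is a filtered colimit of essentially \'etale $R''$-algebras; but for an \emph{arbitrary} ramification-index-one extension with strictly Henselian target, which is what the ``if and only if'' requires, this fails, and one needs the compatibility of N\'eron lft-models with ramification-index-one base change, i.e.\ \cite{BLR}, Chapter 10.1, Proposition 3 --- precisely what the paper cites. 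Second, what you advertise as the ``key step'' --- factoring an arbitrary strictly Henselian ramification-index-one extension $R\subseteq R'$ through an essentially smooth $R''$ by lifting a separating transcendence basis of the residue extension --- is both unnecessary and unavailable in general. It is unnecessary because the logic of the proposition only needs the trivial inclusion of the filtration class into the strictly Henselian ramification-index-one class; and it is unavailable because a separable (i.e.\ geometrically reduced) residue extension that is not finitely generated need not admit a separating transcendence basis at all: for instance $\mathbf{F}_p(t^{1/p^{\infty}})$, or its separable closure, is separable over $\mathbf{F}_p$ but not separably generated. Fortunately your main argument never actually uses this step, so deleting it and repairing the converse via \cite{BLR}, Chapter 10.1, Proposition 3 yields a complete proof.
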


\begin{proof}
(iii) $\Rightarrow$ (i): Let $R\subseteq R''$ be a local extension of discrete valuation rings, and suppose that $R''$ is essentially smooth over $R.$ Let $K'':=\Frac R''.$ Let $R':=R''^{\mathrm{sh}}$ be the strict Henselisation of $R''$ with respect to some choice of separable closure of the residue field of $R'',$ and let $K''^{\mathrm{sh}}$ be its field of fractions. Let $x\colon \Spec K''\to \mathscr{G}$ be a morphism over $S.$ The induced morphism $x^{\mathrm{sh}}\colon \Spec K''^{\mathrm{sh}}\to \mathscr{G}$ comes from a morphism $y^{\mathrm{sh}}\colon \Spec R''^{\mathrm{sh}}\to \mathscr{G}$ by assumption. Let $U$ be an open affine neighbourhood in $\mathscr{G}$ of the image of the special point of $\Spec R''^{\mathrm{sh}}.$ Then $y^{\mathrm{sh}}$ factors through $U.$ Now consider the induced morphism $\Gamma(U, \Og_U)\to R''^{\mathrm{sh}}.$ Because $x^{\mathrm{sh}}$ comes form a $K''$-point of $\mathscr{G},$ this morphism factors through $R''$ (indeed, $R''^{\mathrm{sh}}\cap K''=R''$), which implies that $y^{\mathrm{sh}}$ comes from a morphism $y\colon \Spec R''\to \mathscr{G}$ extending $x.$ We conclude this implication using \cite{BLR}, Chapter 10.1, Proposition 2. \\
The implication (i) $\Rightarrow$ (ii) follows from \cite{BLR}, Chapter 10.1, Proposition 3. \\
Finally, the implication (ii) $\Rightarrow$ (iii) is trivial since both $R\subseteq R''$ and $R''\subseteq R'$ are of ramification index 1; hence so is $R\subseteq R''.$
\end{proof}
\begin{lemma} \rm (cf. \cite{LiuTong}, Corollary 2.5) \it
Let $S$ be a Dedekind scheme and let $\mathscr{G}\to S$ be a smooth separated group scheme over $S.$ Suppose that, for all closed points $\mathfrak{p}$ of $S$, the group scheme $\mathscr{G}_{\mathfrak{p}}:=\mathscr{G}\times_SS_{\mathfrak{p}}$ is the Néron lft-model of its generic fibre, where $S_{\mathfrak{p}}$ is the localisation of $S$ at $\mathfrak{p}.$ Then $\mathscr{G}$ is the Néron lft-model of its generic fibre over $S.$ \label{localgloballem}
\end{lemma}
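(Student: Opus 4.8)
The plan is to reduce the global Néron mapping property to the local one that we are given by hypothesis. The Néron mapping property for $\mathscr{G}\to S$ requires that, for every smooth $S$-scheme $Z$, every $K$-morphism $Z_K\to \mathscr{G}_K$ extend uniquely to an $S$-morphism $Z\to\mathscr{G}$. Uniqueness is immediate: since $\mathscr{G}$ is separated and $Z$ is flat over $S$ (being smooth), two extensions agreeing on the schematically dense generic fibre must coincide. So the entire content is existence, and the idea is that existence is a statement that can be checked locally on $S$.

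First I would reduce to the case where $Z$ is affine and, using that the special fibres of $\mathscr{G}$ over distinct closed points are disjoint, localize the problem at one closed point at a time. More precisely, given a smooth $Z\to S$ and a $K$-morphism $u_K\colon Z_K\to\mathscr{G}_K$, I want to glue together extensions constructed over each local ring $S_{\mathfrak{p}}$. The key point is that $S\setminus\{\text{finitely many points}\}$ causes no trouble because away from the closed points we are over $K$, and the generic fibre morphism is already given; the obstruction to extending lives only over the closed points. For each closed point $\mathfrak{p}$, the base change $Z\times_S S_{\mathfrak{p}}\to S_{\mathfrak{p}}$ is smooth, and by hypothesis $\mathscr{G}_{\mathfrak{p}}$ is the Néron lft-model of its generic fibre $\mathscr{G}_K$; hence $u_K$ pulled back to $(Z\times_S S_{\mathfrak{p}})_K = Z_K\times_{\Spec K}\Spec K$ extends uniquely to an $S_{\mathfrak{p}}$-morphism $Z\times_S S_{\mathfrak{p}}\to\mathscr{G}_{\mathfrak{p}}\to\mathscr{G}$.

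Next I would glue these local extensions into a single $S$-morphism. For this I would argue that an $S$-morphism $Z\to\mathscr{G}$ is determined by, and can be assembled from, its restrictions over the local rings $S_{\mathfrak{p}}$ together with the given generic data, using that the structure sheaf of a Dedekind scheme satisfies $\Og_S(V)=\bigcap_{\mathfrak{p}\in V}\Og_{S,\mathfrak{p}}$ inside $K$ for open $V$. Concretely, after restricting to an affine open $U=\Spec A$ of $S$ and an affine open of $\mathscr{G}$ through which the relevant maps factor, giving an extension amounts to extending certain elements of the function ring from $K$ into $A$; an element of $K$ lies in $A$ precisely when it lies in every localization $A_{\mathfrak{p}}$, so the local extensions over all $\mathfrak{p}\in U$ automatically glue to the desired $A$-point. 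The separatedness and smoothness of $\mathscr{G}$ guarantee that these ring-theoretic extensions are compatible with the group-scheme structure and patch correctly on overlaps.

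The main obstacle I anticipate is the gluing step, and in particular making rigorous the claim that local extensibility at every closed point implies global extensibility. The subtlety is that a morphism $Z\to\mathscr{G}$ is not literally a tuple of functions but must be built compatibly across an open cover of $\mathscr{G}$, and one must ensure that the locally constructed extensions $Z\times_S S_{\mathfrak{p}}\to\mathscr{G}$ land in compatible affine charts and agree on the (schematically dense) generic fibre, so that they do in fact come from a single global morphism rather than merely a compatible family of local ones. The cleanest way to handle this is probably to note that, because $\mathscr{G}$ is separated and $Z$ is $S$-flat with schematically dense generic fibre $Z_K$, the extension problem is representable and its solvability is a local question on the target of the valuative-type criterion supplied by Proposition \ref{strongcritprop} and \cite{BLR}, Chapter 1.2; I would then invoke that the uniqueness already established lets one glue the $S_{\mathfrak p}$-morphisms by a standard faithfully-flat or intersection-of-local-rings descent argument. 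Once gluing is justified, the conclusion that $\mathscr{G}$ satisfies the Néron mapping property, and hence is the Néron lft-model of $\mathscr{G}_K$, is immediate.
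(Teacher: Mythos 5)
Your uniqueness argument is fine, and your overall instinct (localise at closed points, use the hypothesis there, then glue) matches the paper's strategy. But the gluing step, which you yourself flag as the main obstacle, has a genuine gap, and the tools you propose for it do not close it. The family $\{S_{\mathfrak{p}}\}_{\mathfrak{p}}$ of \emph{all} local rings of $S$ (even together with $\Spec K$) is \emph{not} an fpqc covering of $S$ when $S$ has infinitely many closed points: each $S_{\mathfrak{p}}$ has only two points, so a quasi-compact open of $\bigsqcup_{\mathfrak{p}} S_{\mathfrak{p}}$ meets only finitely many of them and cannot dominate any affine open of $S$. Hence "standard faithfully-flat descent" over this family is not available. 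Your intersection-of-local-rings variant ($A=\bigcap_{\mathfrak{p}}A_{\mathfrak{p}}$) has the same problem in disguise: it presupposes that the would-be extension, and all the local extensions $Z\times_S S_{\mathfrak{p}}\to\mathscr{G}$, factor through one fixed affine chart of $\mathscr{G}$, which is precisely what is not given. Moreover, the assertion that "$S\setminus\{\text{finitely many points}\}$ causes no trouble because away from the closed points we are over $K$" is false for a global Dedekind scheme: removing finitely many closed points still leaves infinitely many, and the generic point is not open; that the given $K$-morphism extends over the complement of \emph{finitely many} closed points is a statement that requires proof, not a triviality.

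That statement is exactly the missing idea, and it is the heart of the paper's proof: one first reduces to smooth $T\to S$ \emph{of finite presentation}, and then a spreading-out (limit) argument (\cite{Stacks}, Tag 01ZC) shows that $T_K\to\mathscr{G}_K$ already extends to a morphism over $S\setminus\{\mathfrak{p}_1,\dots,\mathfrak{p}_r\}$ for some finite set of closed points. Only the finitely many remaining points need the local hypothesis, and then $\bigl(S\setminus\{\mathfrak{p}_1,\dots,\mathfrak{p}_r\}\bigr)\sqcup S_{\mathfrak{p}_1}\sqcup\dots\sqcup S_{\mathfrak{p}_r}\to S$ \emph{is} a genuine fpqc covering; since all pairwise overlaps live over $\Spec K$, where every piece restricts to the given morphism, separatedness of $\mathscr{G}$ gives agreement on overlaps and the sheaf property of schemes in the fpqc topology glues the pieces. (Alternatively one could spread out each local extension $Z\times_S S_{\mathfrak{p}}\to\mathscr{G}$ to an open neighbourhood of $\mathfrak{p}$ and glue Zariski-locally, but some finiteness/spreading-out input of this kind is unavoidable.) Without it, local extendability at every closed point simply does not assemble into a global morphism by descent.
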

\begin{proof}
Let $K$ be the field of rational functions of $S.$ It suffices to show that, for all smooth morphisms $T\to S$ \it of finite presentation \rm  and every morphism $T_K\to \mathscr{G}_K,$ there is a unique morphism $T\to \mathscr{G}$ extending $T_K\to \mathscr{G}_K.$ Suppose we have chosen such a scheme and a morphism over $K.$ By passing to the limit (\cite{Stacks}, Tag 01ZC), there is a finite set of closed points $\{\mathfrak{p}_1, ..., \mathfrak{p}_r\}$ of $S$ such that $T_K\to \mathscr{G}_K$ extends to a morphism over $S\backslash \{\mathfrak{p}_1, ..., \mathfrak{p}_r\}.$ By assumption, $T_K\to \mathscr{G}_K$ also extends to morphisms $T\times_SS_{\mathfrak{p}_j}\to \mathscr{G}\times_SS_{\mathfrak{p}_j}$ for all $j=1,..., r.$ Because schemes are sheaves in the fpqc-topology, $T_K\to \mathscr{G}_K$ does indeed extend to a morphism $T\to \mathscr{G}$ as required. Uniqueness follows because $\mathscr{G}$ is separated over $S.$
\end{proof}\\
We can use Proposition \ref{strongcritprop} and Lemma \ref{localgloballem} above to deduce the following (partial) generalisation of \cite{BLR}, Chapter 7.5, Proposition 1 (b): 
\begin{corollary}
Let $S$ be a Dedekind scheme, and let $0\to \mathscr{G}'\to \mathscr{G}\to \mathscr{G}''\to 0$ be an exact sequence of smooth separated group schemes over $S.$ Assume, moreover, that $\mathscr{G}'$ and $\mathscr{G}''$ are the Néron lft-models of their respective generic fibres. Then so is $\mathscr{G}.$ \label{exactsequenceNéroncor}
\end{corollary}
\begin{proof}
By Lemma \ref{localgloballem}, we may assume, without loss of generality, that $S=\Spec R,$ where $R$ is a discrete valuation ring. 
In this case, Proposition \ref{strongcritprop} tells us that it suffices to show that for all local extensions $R\subseteq R'$ of discrete valuation rings of ramification index one with $R'$ strictly Henselian, the induced map $\mathscr{G}(R')\to \mathscr{G}(K')$ is surjective, where $K':=\Frac R'.$ Note that the sequence $0\to \mathscr{G}'(R')\to \mathscr{G}(R')\to \mathscr{G}''(R')\to 0$ is exact because $R'$ is strictly Henselian and $\mathscr{G'}$ is smooth over $S,$ as is the sequence $0\to \mathscr{G}'(K')\to \mathscr{G}(K')\to \mathscr{G}''(K').$ The same diagram chasing argument as in \cite{BLR}, Chapter 7.5, proof of Proposition 1 (b) shows that the map $\mathscr{G}(R')\to \mathscr{G}(K')$ is surjective, as desired.
\end{proof}\\
Many of the following Lemmata are certainly well-known to the experts; we give proofs here for the reader's convenience:
\begin{lemma}
Let $S$ be a locally Noetherian scheme of dimension $\leq 1.$ Let $S'\to S$ be a finite and locally free morphism. Let $\mathscr{G}\to S'$ be a separated group scheme locally of finite presentation over $S'.$ Then $\Res_{S'/S}\mathscr{G}$ is representable by a separated group scheme locally of finite presentation over $S.$ If $\mathscr{G}$ is smooth over $S',$ then $\Res_{S'/S}\mathscr{G}$ is smooth over $S.$ 
\end{lemma}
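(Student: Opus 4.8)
The plan is to treat $\Res_{S'/S}\mathscr{G}$ as the fppf sheaf $T\mapsto \mathscr{G}(T\times_S S')$ on $S$-schemes, which inherits a group structure from $\mathscr{G}$, and to establish representability, separatedness, finite presentation, and smoothness in turn. Since Weil restriction commutes with base change along any $S_0\to S$, representability is local on $S$, so I would first reduce to the case $S=\Spec A$ with $S'=\Spec B$ and $B$ a finite free $A$-module. Observe that, because $S$ is locally Noetherian and $S'\to S$ is finite, $S'$ is locally Noetherian, so the hypothesis that $\mathscr{G}\to S'$ be locally of finite presentation is equivalent to its being locally of finite type; in particular every fibre of $\mathscr{G}\to S'$ is a group scheme of finite type over a field.

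For representability I would invoke the standard gluing method for Weil restriction along a finite locally free morphism (cf. \cite{BLR}, Chapter 7.6): writing $\mathscr{G}$ as a union of affine opens, the restriction of each affine open is representable by an affine $S$-scheme, and these glue to represent $\Res_{S'/S}\mathscr{G}$ provided that every finite subset of every fibre $\mathscr{G}\times_S\kappa(s)$ (for $s\in S$) is contained in an affine open of $\mathscr{G}$. Verifying this condition is, I expect, the heart of the matter, and it is exactly where the group structure is indispensable, since for a general separated scheme of finite type over a field a finite set of points need not lie in a common affine open. Here the fibre $\mathscr{G}\times_S\kappa(s)=\mathscr{G}\times_{S'}(S'\times_S\kappa(s))$ is a group scheme over the Artinian ring $B\otimes_A\kappa(s)$; its underlying topological space agrees, via the nilpotent thickening given by the reduction, with that of the finite disjoint union $\coprod_i \mathscr{G}\times_{S'}\kappa(s_i')$ taken over the finitely many points $s_i'\in S'$ above $s$. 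Each factor is a group scheme of finite type over the field $\kappa(s_i')$, hence quasi-projective (its identity component is of finite type and quasi-projective, and $\mathscr{G}$ is the disjoint union of its cosets); a finite set of points in a quasi-projective scheme over a field lies in an affine open, and a finite disjoint union of affine opens is affine. As affine opens of the fibre correspond to those of the disjoint union, the criterion holds, and $\Res_{S'/S}\mathscr{G}$ is representable by a scheme, hence by a group scheme since it is a group functor.

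It remains to record the qualitative properties. Separatedness of $\Res_{S'/S}\mathscr{G}\to S$ follows from separatedness of $\mathscr{G}\to S'$, because Weil restriction carries the diagonal of $\mathscr{G}$ to the diagonal of $\Res_{S'/S}\mathscr{G}$ and preserves closed immersions. For finite presentation I would check on the functor that $\Res_{S'/S}\mathscr{G}$ commutes with filtered colimits of affine $S$-algebras; this is immediate from $\mathscr{G}\to S'$ being locally of finite presentation together with $S'\to S$ being finite locally free, which gives $(\varinjlim T_i)\times_S S'=\varinjlim(T_i\times_S S')$, and combined with representability by a scheme locally of finite type over the locally Noetherian base $S$ it yields local finite presentation.

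Finally, for the smoothness statement I would use that smooth is equivalent to formally smooth plus locally of finite presentation, the latter having just been established. Formal smoothness is checked by the infinitesimal lifting criterion: given a square-zero (more generally nilpotent) closed immersion $T_0\hookrightarrow T$ of affine $S$-schemes, I must lift any $T_0$-point of $\Res_{S'/S}\mathscr{G}$ to a $T$-point. By the defining adjunction this amounts to lifting an $S'$-morphism $T_0\times_S S'\to\mathscr{G}$ along the immersion $T_0\times_S S'\hookrightarrow T\times_S S'$, which is again square-zero (nilpotent) as the base change of one; since $\mathscr{G}\to S'$ is smooth, hence formally smooth, such a lift exists, and therefore $\Res_{S'/S}\mathscr{G}\to S$ is smooth. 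The only genuinely delicate point in the whole argument is the verification of the affine-open condition on the fibres, for which the quasi-projectivity of group schemes of finite type over a field is the essential input.
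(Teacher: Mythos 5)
There is a genuine gap, and it sits exactly at the point you yourself identify as ``the heart of the matter.'' The representability criterion you invoke (\cite{BLR}, Chapter 7.6, Theorem 4) requires that every finite set of points of $\mathscr{G}$ lying in a fibre of $\mathscr{G}\to S'\to S$ be contained in an open affine subset \emph{of the total space} $\mathscr{G}$, because the construction glues the Weil restrictions of affine opens of $\mathscr{G}$. What your fibrewise argument produces is something strictly weaker: an affine open \emph{of the fibre} $\mathscr{G}\times_S\Spec\kappa(s)$ containing the given points (via quasi-projectivity of group schemes of finite type over a field, applied to the factors over the points of $S'$ above $s$). An affine open of a fibre need not be the trace of an affine open of $\mathscr{G}$, and no argument is given to pass from one to the other. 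This passage is precisely the hard step, and it cannot be purely formal: your proposal never uses the hypothesis $\dim S\leq 1$, yet that hypothesis is essential --- over $2$-dimensional regular bases there exist smooth separated group schemes (due to Raynaud) which fail the affine-open condition, so a proof that ignores the dimension restriction cannot be complete.

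The paper closes exactly this gap by citing Anantharaman (\cite{An}, Th\'eor\`eme 4.A): a separated group scheme locally of finite type over a locally Noetherian base of dimension $\leq 1$ is \emph{de type (FA)}, i.e.\ every finite set of points mapping into an affine open of the base lies in an affine open of the total space. Granting that, the argument is the short one you sketched: a finite set $P$ of points of $\mathscr{G}$ over $s\in S$ maps into the preimage $V\subseteq S'$ of an affine neighbourhood of $s$, $V$ is affine since $S'\to S$ is finite, and (FA) then puts $P$ inside an affine open of $\mathscr{G}$. Your reduction to the affine case, and your treatments of separatedness, local finite presentation, and smoothness (the last two also obtainable from \cite{BLR}, Chapter 7.6, Proposition 5) are fine; but without Anantharaman's theorem, or some substitute that genuinely uses the group structure of $\mathscr{G}$ over the one-dimensional base $S'$ rather than only over its fibres, the representability claim is not established.
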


\begin{proof}
By \cite{BLR}, Chapter 7.6, Theorem 4, the functor $\Res_{S'/S}\mathscr{G}$ is representable as soon as any finite set of points of $\mathscr{G}$ contained in a fibre of $\mathscr{G}\to S'\to S$ is contained in an open affine subset of $\mathscr{G}.$ By \cite{An}, Théorème 4.A, the morphism $\mathscr{G}\to S'$ is \it de type (FA) \rm in the terminology of \it loc. cit., \rm i. e., every finite set of points of $\mathscr{G}$ which maps to an open affine subset of $S'$ is contained in an open affine subset of $\mathscr{G}.$ Let $P$ be a finite set of points of $\mathscr{G}$ all of whose elements are mapped to the point $s\in S.$ Let $U$ be an open affine neighbourhood of $s.$ Then the pre-image $V$ of $U$ in $S'$ is affine (since $S'$ is finite over $S$), and clearly $P$ is mapped into $V.$ Therefore $P$ is contained in an open affine subset of $\mathscr{G}.$ Hence $\Res_{S'/S}\mathscr{G}$ is indeed representable. The remaining claims follow from \cite{BLR}, Chapter 7.6, Proposition 5 (b), (d), and (h). 
\end{proof}\\
It is an immediate consequence of the Néron mapping property that Néron lft-models commute with Weil restriction, i. e., if $S'\to S$ is a finite locally free extension of Dedekind schemes and $\mathscr{G}\to S'$ is a group scheme which is the Néron lft-model of its generic fibre, then the functor $\Res_{S'/S}\mathscr{G}$ satisfies the Néron mapping property as well. The preceding Lemma shows that the Weil restriction $\Res_{S'/S} \mathscr{G}$ always exists and satisfies the scheme-theoretic properties required of a Néron lft-model. We shall use this result freely throughout this paper.\\
\\
In some cases, it is possible to construct Néron lft-models by hand; the most prominent example is the Néron lft-model of ${\mathbf{G}_{\mathrm{m}, K}}$ over a Dedekind scheme $S$ with field of rational functions $K$ (see \cite{BLR}, Chapter 10.1, Example 5), which we shall denote by $\mathscr{G}_{\mathrm{m}, S}.$ 

\begin{lemma} \label{Gmclimprop}
Let $S$ be an excellent Dedekind scheme with field of rational functions $K$ and let $G$ be a smooth connected algebraic group scheme over $K.$ Suppose that there is a closed immersion ${\mathbf{G}_{\mathrm{m}, K}}\to G$ of $K$-group schemes. Assume, moreover, that $G$ admits a Néron lft-model $\mathscr{G}$ over $S.$ Then the induced morphism $\mathscr{G}_{\mathrm{m}, S}\to \mathscr{G}$ is a closed immersion. Moreover, the fppf-quotient $\mathscr{G}/\mathscr{G}_{\mathrm{m}, S}$ is representable and isomorphic to the Néron lft-model of $G/{\mathbf{G}_{\mathrm{m}, K}}.$
\end{lemma}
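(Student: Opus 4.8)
The plan is to reduce to the case of a discrete valuation ring and then study the morphism $\phi\colon \mathscr{G}_{\mathrm{m}}\to \mathscr{G}$ induced by the N\'eron mapping property. Since the assertion that $\phi$ be a closed immersion is local on the target, and since being the N\'eron lft-model of the generic fibre may be checked locally by Lemma \ref{localgloballem}, I would assume throughout that $S=\Spec R$ for a discrete valuation ring $R$ with fraction field $K$ and residue field $k$. The morphism $\phi$ arises as follows: the generic fibre of the smooth scheme $\mathscr{G}_{\mathrm{m}}$ is $\Gm$, which maps into $G=\mathscr{G}_K$ via the given closed immersion, so the N\'eron mapping property of $\mathscr{G}$ provides a unique extension $\phi\colon \mathscr{G}_{\mathrm{m}}\to\mathscr{G}$, and uniqueness forces $\phi$ to be a homomorphism.

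The first main step is to show that $\phi$ is a closed immersion. Let $\mathscr{T}\subseteq \mathscr{G}$ denote the schematic closure of $\Gm\subseteq G=\mathscr{G}_K$; this is a flat closed $R$-subgroup scheme with generic fibre $\Gm$. Because $\mathscr{G}_{\mathrm{m}}$ is $R$-flat, its generic fibre is schematically dense, so the preimage $\phi^{-1}(\mathscr{T})$ is a closed subscheme containing a schematically dense open and hence equals $\mathscr{G}_{\mathrm{m}}$; thus $\phi$ factors as $\mathscr{G}_{\mathrm{m}}\xrightarrow{\psi}\mathscr{T}\hookrightarrow \mathscr{G}$ with the second map a closed immersion, and it suffices to prove that $\psi$ is an isomorphism. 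The key input here is the rigidity of tori over $R$: the schematic closure of the split torus $\Gm$ inside the smooth group scheme $\mathscr{G}$ has smooth identity component (namely a torus, the connected N\'eron model $\mathscr{G}_{\mathrm{m}}^{0}$), and a group scheme over a discrete valuation ring whose identity component is smooth is itself smooth. Granting this, $\mathscr{T}$ is a smooth separated model of $\Gm$, so the N\'eron mapping property of $\mathscr{G}_{\mathrm{m}}$ yields a morphism $\alpha\colon\mathscr{T}\to \mathscr{G}_{\mathrm{m}}$ restricting to the identity on generic fibres; since $\mathscr{G}_{\mathrm{m}}$ and $\mathscr{T}$ are separated and $R$-flat, the composites $\alpha\circ\psi$ and $\psi\circ\alpha$ agree with the respective identities on the schematically dense generic fibres and hence are the identities, so $\psi$ is an isomorphism and $\phi$ is a closed immersion. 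I expect the smoothness of the schematic closure $\mathscr{T}$ --- equivalently, the statement that $\phi$ does not degenerate on the special fibre, which amounts to $\Lie(\phi)$ remaining injective modulo the maximal ideal --- to be the main obstacle, and the structure theory of tori over a discrete valuation ring is what makes it go through.

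It remains to treat the quotient. Once $\mathscr{G}_{\mathrm{m}}\to \mathscr{G}$ is known to be a closed immersion of smooth $R$-group schemes with $\mathscr{G}_{\mathrm{m}}$ flat over $R$, the fppf-quotient sheaf $\mathscr{H}:=\mathscr{G}/\mathscr{G}_{\mathrm{m}}$ is representable by a smooth separated $R$-group scheme: representability follows from the results of Anantharaman on quotients by flat closed subgroup schemes over a base of dimension one (\cite{An}), applied to the identity component together with the \'etale component group; smoothness descends along the faithfully flat projection $\mathscr{G}\to\mathscr{H}$, and separatedness follows from $\mathscr{G}_{\mathrm{m}}$ being closed in $\mathscr{G}$. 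Its generic fibre is $G/\Gm$.

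Finally, I would verify that $\mathscr{H}$ is the N\'eron lft-model of $G/\Gm$ by means of Proposition \ref{strongcritprop}. Let $R\subseteq R'$ be a local extension of ramification index one with $R'$ strictly Henselian and $K':=\Frac R'$, and let $x\in \mathscr{H}(K')=(G/\Gm)(K')$. The fibre of $G\to G/\Gm$ over $x$ is a $\Gm$-torsor over the field $K'$, hence trivial because $\Pic(\Spec K')=0$, so $x$ lifts to some $\tilde{x}\in G(K')=\mathscr{G}(K')$. By the N\'eron mapping property of $\mathscr{G}$, in the form of the criterion in Proposition \ref{strongcritprop}, the point $\tilde{x}$ extends to $\tilde{y}\in \mathscr{G}(R')$, and the image of $\tilde{y}$ in $\mathscr{H}(R')$ restricts to $x$ on $\Spec K'$. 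Thus $\mathscr{H}(R')\to \mathscr{H}(K')$ is surjective, and Proposition \ref{strongcritprop} shows that $\mathscr{H}$ satisfies the N\'eron mapping property, completing the identification of $\mathscr{G}/\mathscr{G}_{\mathrm{m}}$ with the N\'eron lft-model of $G/\Gm$.
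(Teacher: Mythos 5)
Your strategy founders at the step you yourself flag as the main obstacle: the smoothness of the schematic closure $\mathscr{T}$ of $\Gm$ in $\mathscr{G}$. You justify it by asserting that the identity component of $\mathscr{T}$ is a torus (``the connected N\'eron model $\mathscr{G}_{\mathrm{m}}^0$'') by ``rigidity of tori over $R$''. No such principle exists for schematic closures in a general smooth separated group scheme, and since your argument for this step never invokes the N\'eron property of the ambient $\mathscr{G}$, it would have to hold in that generality. It does not: take $R=\Z_p$ ($p$ odd), $R'$ a ramified quadratic extension with uniformiser $\pi$, and consider the norm-one torus $T^1\subset \Res_{K'/K}\Gm$ inside the smooth affine $R$-group $\Res_{R'/R}\Gm.$ The special fibre of the Weil restriction is the unit group of $k[\pi]/\langle\pi^2\rangle$, i.e.\ an extension of $\Gm$ by $\Ga$ over $k$, and the norm reduces to $(a,b)\mapsto a^2$ there; consequently the identity component of the schematic closure of $T^1$ has special fibre $\Ga_k$ --- unipotent, not a torus. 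Base changing to $R'$ (schematic closures commute with flat base change) turns $T^1$ into a split $\Gm$, so the failure occurs for $\Gm$ itself. Closures of smooth subgroups need not even be smooth: the closure of the \'etale subgroup $\mu_{p,K}\subset \Gm_K$ (char $K=0$, residue characteristic $p$) in $\Gm_R$ is $\mu_{p,R}.$ So smoothness of $\mathscr{T}$ is essentially equivalent to the assertion being proved and must use the N\'eron property of $\mathscr{G}$ in an essential way. The paper takes a different route: over the localisation $S_{\mathfrak{p}}$ it first proves that $G/\Gm$ admits a N\'eron lft-model $\mathscr{G}'$ (\cite{BLR}, Chapter 10.2, Theorem 2 (b'), after excluding $\Ga$-subgroups of $G/\Gm$ via \cite{SGA3}, Expos\'e XVII, Th\'eor\`eme 6.1.1 and \cite{BLR}, Chapter 10.1, Proposition 8), and then runs the Chai--Yu argument (\cite{CY}, Lemma 11.2) to get exactness of $0\to\mathscr{G}_{\mathrm{m},\mathfrak{p}}\to\mathscr{G}_{\mathfrak{p}}\to\mathscr{G}'\to 0$; the closed immersion then comes for free, $\mathscr{G}_{\mathrm{m},\mathfrak{p}}$ being the kernel of a homomorphism to a separated scheme. (Your closing paragraph, verifying the N\'eron property of the quotient via Hilbert 90 and Proposition \ref{strongcritprop}, is correct and would pleasantly bypass the existence theorem the paper quotes --- but only once the closed immersion and representability are in hand.)

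There is a second, independent gap in the opening reduction. Being a closed immersion is local on the target for open (or fpqc) covers, but the localisations $S_{\mathfrak{p}}$ at the closed points of a Dedekind scheme with infinitely many closed points form no such cover; nor can one spread out from the local statements by limit arguments, because at that stage $\mathscr{G}_{\mathrm{m}}\to\mathscr{G}$ is not known to be quasi-compact ($\mathscr{G}_{\mathrm{m}}$ has component group $\Z$ over every closed point, so it is not of finite type). This is exactly why the paper's proof has a second half: from the local statements it deduces that $\mathscr{G}_{\mathrm{m}}\to\mathscr{G}$ is unramified and universally injective, and then verifies the valuative criterion for universal closedness, concluding by \cite{Stacks}, Tag 04XV. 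The same issue affects the quotient: Anantharaman's representability theorem (\cite{An}, Th\'eor\`eme 4.C) is applied over $S$ only after the closed immersion is known globally, whereas the N\'eron mapping property itself may indeed be checked locally by Lemma \ref{localgloballem}, as you say.
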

\begin{proof}
Note that $G/\mathbf{G}_{\mathrm{m}, K}$ is representable by a $K$-group scheme of finite presentation by \cite{An}, Théorème 4.C. Moreover, this quotient is smooth because $G$ is smooth by assumption. Let $\mathfrak{p}$ be a closed point of $S$ and let $S_{\mathfrak{p}}$ be the localisation of $S$ at $\mathfrak{p}.$ Then $S_{\mathfrak{p}}$ is the spectrum of an excellent discrete valuation ring with field of fractions $K.$ Moreover, $\mathscr{G}_{\mathfrak{p}}:=\mathscr{G}\times_SS_{\mathfrak{p}}$ is the Néron lft-model of $G$ over $S_{\mathfrak{p}};$ the same is true for $\mathscr{G}_{\mathrm{m}, \mathfrak{p}}:=\mathscr{G}_{\mathrm{m},S}\times_SS_{\mathfrak{p}}$ and ${\mathbf{G}_{\mathrm{m}, K}}.$ First we claim that $G/{\mathbf{G}_{\mathrm{m}, K}}$ admits a Néron lft-model over $S_{\mathfrak{p}}.$ By \cite{BLR}, Chapter 10.2, Theorem 2 (b'), all we have to show is that $G/{\mathbf{G}_{\mathrm{m}, K}}$ does not have a closed subgroup isomorphic to $\mathbf{G}_{\mathrm{a}, K}$ (this is where we use that $S_{\mathfrak{p}}$ is excellent). If this were false, then, denoting by $G'$ the pre-image of $\mathbf{G}_{\mathrm{a},K}$ in $G,$ we would obtain an exact sequence $0\to {\mathbf{G}_{\mathrm{m}, K}}\to G'\to \mathbf{G}_{\mathrm{a},K}\to 0$ over $K.$  By \cite{SGA3}, Exposé XVII, Théorème 6.1.1 A) ii), we could now construct a closed immersion $\mathbf{G}_{\mathrm{a}, K}\to G'\to G,$ contradicting \cite{BLR}, Chapter 10.1, Proposition 8.\\
Let $\mathscr{G}'$ denote the Néron lft-model of $G/{\mathbf{G}_{\mathrm{m}, K}}$ over $S_{\mathfrak{p}}.$ Then the argument given in the proof of \cite{Chai}, Corollary 4.7 shows that the sequence
$$0\to \mathscr{G}_{\mathrm{m}, \mathfrak{p}}\to \mathscr{G}_{\mathfrak{p}}\to \mathscr{G}'\to 0$$ is exact. In particular, the map $\mathscr{G}_{\mathrm{m}, S}\to \mathscr{G}$ is a closed immersion after localising at any closed point $\mathfrak{p}$ of $S.$ Since this morphism is clearly locally of finite presentation, we find that it is unramified and universally injective (since it is set-theoretically injective on all fibres and induces isomorphisms on residue fields; see \cite{Stacks}, Tags 01S3, 01S4, and 02G8). By \cite{Stacks}, Tag 04XV (5), all that remains to be shown is that the morphism is universally closed. We use the valuative criterion for universal closedness (\cite{Stacks}, Tag 01KF). The map $\mathscr{G}_{\mathrm{m}, S} \to \mathscr{G}$ is easily seen to be quasi-compact since the group of components of $\mathscr{G}_{\mathrm{m}}$ is torsion-free at each point of $S.$ Let $\mathcal{R}$ be a valuation ring with field of fractions $\mathcal{K}$ and let $\varphi\colon \Spec \mathcal{R} \to \mathscr{G}$ be a morphism of schemes whose restriction to $\Spec \mathcal{K}$ factors through $\mathscr{G}_{\mathrm{m}, S}.$ Let $\mathfrak{m}$ be the maximal ideal of $\mathcal{R}.$ Let $\mathfrak{p}$ be the image of $\mathfrak{m}$ in $S.$ Then $\varphi$ factors through $\mathscr{G}_{\mathfrak{p}}.$ Moreover, the restriction to $\Spec \mathcal{K}$ of the induced map $\Spec \mathcal{R}\to \mathscr{G}_{\mathfrak{p}}$ factors through $\mathscr{G}_{\mathrm{m}, \mathfrak{p}}$ by assumption. Because we already know that the map $\mathscr{G}_{\mathrm{m}, \mathfrak{p}}\to \mathscr{G}_{\mathfrak{p}}$ is a closed immersion, we deduce that $\phi$ factors through $\mathscr{G}_{\mathrm{m}, S}.$ Hence we conclude that the morphism $\mathscr{G}_{\mathrm{m}, S}\to \mathscr{G}$ is a closed immersion. The claim that $\mathscr{G}/\mathscr{G}_{\mathrm{m}, S}$ is representable now follows from \cite{An}, Théorème 4.C, and it is easy to check that the quotient is smooth and separated over $S.$ Finally, this quotient is the Néron lft-model of $G/{\mathbf{G}_{\mathrm{m}, K}}$ over $S$ by Lemma \ref{localgloballem}.
\end{proof}

\begin{lemma}
Let $S$ be an excellent Dedekind scheme with function field $K.$ Let $A\subseteq B$ be two finite non-zero reduced $K$-algebras. Then the $K$-group scheme $$(\Res_{B/K}{\mathbf{G}_{\mathrm{m}, B}})/\Res_{A/K}{\mathbf{G}_{\mathrm{m}, A}}$$ admits a Néron lft-model over $S.$ \label{uninéronexistlem}
\end{lemma}
\begin{proof}
Let $S_B$ and $S_A$ denote the integral closures of $S$ in $B$ and $A,$ respectively. Because $A$ and $B$ are reduced and $S$ is excellent, $S_A$ and $S_B$ are (not necessarily connected) Dedekind schemes which are finite and locally free over $S,$ and we have obvious fppf-covers $S_B\to S_A \to S.$ Let $\mathscr{G}_{\mathrm{m}, S_B}$ be the Néron lft-model of ${\mathbf{G}_{\mathrm{m}, B}}$ over $S_B.$  By \cite{CGP}, Corollary A.5.4 (3), the sequence
$$0\to \Res_{A/K}{\mathbf{G}_{\mathrm{m}, A}}\to \Res_{B/K}{\mathbf{G}_{\mathrm{m}, B}} \to \Res_{A/K} (\Res_{B/A}{\mathbf{G}_{\mathrm{m}, B}}/{\mathbf{G}_{\mathrm{m}, A}})\to 0$$ is exact. By Lemma \ref{Gmclimprop}, the algebraic group $\Res_{B/A}{\mathbf{G}_{\mathrm{m}, B}}/{\mathbf{G}_{\mathrm{m}, A}}$ admits a Néron lft-model $\mathscr{R}$ over $S_A.$ By the discussion preceding Lemma \ref{Gmclimprop}, $\Res_{S_B/S} \mathscr{R}$ is the desired Néron lft-model of $(\Res_{B/K}{\mathbf{G}_{\mathrm{m}, B}})/\Res_{A/K}{\mathbf{G}_{\mathrm{m}, A}}$ over $S.$
\end{proof}

\subsection{Factorisation of birational morphisms of one-dimensional schemes}
We shall now proceed to showing that each finite birational morphism $f\colon X\to Y$ of reduced curves over an arbitrary field $\kappa$ can be written as a composition of push-outs along prime algebras $\kappa' \to A,$ where $\kappa'$ is a finite field extension of $\kappa.$ We shall set up the necessary technical framework regarding push-outs of schemes and seminormality in this section. This will be more general than immediately needed, since more powerful techniques will be required later. 

\subsubsection{Seminormality and seminormalisation}

Let us first recall a few definitions and results from \cite{Stacks}, Tag 0EUK:
\begin{definition}
Let $S$ be a scheme. We say that $S$ is \rm seminormal \it if for every open affine subscheme $U\subseteq S$ and all $x,y\in \Gamma(U, \Og_U)$ with $x^3=y^2$, there exists a unique $a\in \Gamma(U, \Og_U)$ such that $x=a^2$ and $y=a^3.$  \label{seminormaldef}
\end{definition}
Being seminormal is a local property of schemes by \cite{Stacks}, Tag 0EUP, i. e., it suffices to require the existence of one affine open cover of $S$ all of whose members have the property from the Definition above. It is easy to see that seminormal schemes are reduced. Moreover, given a scheme $S,$ there exists a \it seminormalisation \rm with some remarkable properties:

\begin{proposition}
Let $S$ be a scheme. Then there exists a seminormal scheme $S^{\mathrm{sn}}$ and a morphism $\varsigma\colon S^{\mathrm{sn}}\to S$ which is a universal homeomorphism (hence integral), induces isomorphisms on all residue fields, and satisfies the following universal property: for each universal homeomorphism $S'\to S$  which induces isomorphisms on all residue fields, the morphism $\varsigma\colon S^{\mathrm{sn}}\to S$ factors uniquely through $S'\to S.$ \label{seminormalisationexprop}
\end{proposition}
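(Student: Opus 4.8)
The plan is to construct $S^{\mathrm{sn}}$ affine-locally and then glue, since seminormality is a local property (Tag 0EUP) and the universal property will pin down the gluing uniquely. First I would reduce to the affine case $S = \Spec A$: for each open affine $\Spec A \subseteq S$ I would build $(\Spec A)^{\mathrm{sn}} = \Spec A^{\mathrm{sn}}$, where $A^{\mathrm{sn}}$ is the \emph{seminormalisation of $A$ in its total ring of fractions}, defined as the largest subring $B$ with $A \subseteq B \subseteq A^{\mathrm{red}} \hookrightarrow \Frac(A^{\mathrm{red}})$ such that $\Spec B \to \Spec A^{\mathrm{red}}$ is a universal homeomorphism inducing isomorphisms on residue fields (equivalently, using the gluing-of-points description: $B$ is built from the normalisation, or simply from $A^{\mathrm{red}}$, by identifying points lying over the same point of $\Spec A$ and matching residue fields). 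This is exactly the construction in \cite{Stacks}, Tag 0EUK, so I would quote the existence of $A \to A^{\mathrm{sn}}$ together with the facts that $A^{\mathrm{sn}}$ is seminormal and that $\Spec A^{\mathrm{sn}} \to \Spec A$ is a universal homeomorphism inducing isomorphisms on residue fields.

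Next I would verify the universal property in the affine case. Given any universal homeomorphism $S' \to S = \Spec A$ inducing isomorphisms on all residue fields, I would first note that $S'$ is affine (a universal homeomorphism is integral, hence affine over an affine base), say $S' = \Spec A'$, and that $A \to A'$ is integral, injective modulo nilpotents, and induces isomorphisms on residue fields. The key point is that such an $A'$ embeds into $A^{\mathrm{red}}$'s total ring of fractions compatibly and is dominated by the maximal such extension $A^{\mathrm{sn}}$; concretely, the seminormalisation is characterised in \cite{Stacks}, Tag 0EUK, as the universal object among subextensions of $A^{\mathrm{red}} \to \Frac(A^{\mathrm{red}})$ that are universal homeomorphisms with trivial residue extensions, and any $A'$ as above factors through it. This yields a unique $A' \to A^{\mathrm{sn}}$ of $A$-algebras, i.e. a unique factorisation $S^{\mathrm{sn}} \to S' \to S$.

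To globalise, I would observe that the affine construction is compatible with localisation: for $f \in A$ one has a canonical isomorphism $(A_f)^{\mathrm{sn}} \cong (A^{\mathrm{sn}})_f$, which follows because seminormalisation commutes with localisation (the defining condition in Definition \ref{seminormaldef} and Tag 0EUP is local, and the total-ring-of-fractions construction is stable under localisation). Hence the locally defined schemes $(\Spec A)^{\mathrm{sn}}$ glue along overlaps to a scheme $S^{\mathrm{sn}}$ equipped with a morphism $S^{\mathrm{sn}} \to S$ that is a universal homeomorphism inducing isomorphisms on all residue fields, and $S^{\mathrm{sn}}$ is seminormal because seminormality is local. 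The universal property glues as well: given $S' \to S$ as in the statement, the affine-local factorisations are unique, hence agree on overlaps and patch to a unique morphism $S^{\mathrm{sn}} \to S'$ over $S$.

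I expect the main obstacle to be the compatibility with localisation, i.e. the isomorphism $(A_f)^{\mathrm{sn}} \cong (A^{\mathrm{sn}})_f$, which is what makes the gluing well-defined; this is really a statement that the universal-homeomorphism-with-trivial-residue-extension condition is preserved and reflected under passing to a principal open, and it must be checked carefully so that the universal property descends to the glued object. Everything else — existence in the affine case, seminormality, and the universal factorisation — can be taken essentially verbatim from \cite{Stacks}, Tag 0EUK, so the real content of the proof is the descent/gluing argument rather than any new algebra.
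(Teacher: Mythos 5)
Your gluing strategy is sound in outline (and the localisation compatibility you single out as the main obstacle is indeed true; it is essentially Lemma \ref{loclemma} of the paper, proved from the universal property and \cite{Stacks}, Tag 0EUP), but your affine-local construction is wrong in the generality of the statement, which concerns an \emph{arbitrary} scheme $S.$ You define $A^{\mathrm{sn}}$ as the largest subextension of $A^{\mathrm{red}}\subseteq \Frac(A^{\mathrm{red}})$ which is a universal homeomorphism with trivial residue field extensions. This Traverso-style recipe works for reduced rings with finitely many minimal primes (e.g.\ reduced Noetherian rings, which is all the paper ever needs later), but it fails for general rings: there exist reduced rings $A$ which are equal to their own total ring of fractions (every non-zerodivisor is a unit) and yet are not seminormal. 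For instance, let $R$ be the ring of infinitely many cuspidal curves $\Spec k[u_i,v_i]/\langle v_i^2-u_i^3\rangle$ glued at one rational point, and let $A:=Q(R)$ be its total quotient ring. Then $A$ is reduced, $Q(A)=A,$ and the pair $x=u_1,$ $y=v_1$ satisfies $x^3=y^2$ but admits no $a\in A$ with $a^2=x,$ $a^3=y$ (any such $a=p/s$ would force $p_1=ts_1$ in the first cusp component, which has a nonzero linear term and hence does not lie in $k[t^2,t^3]$). For such an $A$ your recipe returns $A$ itself, which is neither seminormal nor universal: the element that must be adjoined simply does not live in $\Frac(A^{\mathrm{red}}).$ The same defect appears in your verification of the universal property, where you assert that any $A'$ as in the statement "embeds into $A^{\mathrm{red}}$'s total ring of fractions"; this is false even in trivial cases, since $A'$ may be non-reduced (e.g.\ $\Spec k[\epsilon]\to\Spec k$ is a legitimate $S'\to S$).

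This is also why the construction you attribute to \cite{Stacks}, Tag 0EUK is not the one given there: the Stacks Project builds the seminormalisation abstractly, by (transfinitely) adjoining solutions of $x^3=y^2,$ equivalently as a filtered colimit of elementary subintegral extensions $A\subseteq A[b]$ with $b^2,b^3\in A$ (compare Tag 0CND, which the paper uses in the proof of its Lemma on seminormal schemes), and never inside the total ring of fractions. For the same reason your justification of $(A_f)^{\mathrm{sn}}\cong(A^{\mathrm{sn}})_f$ ("the total-ring-of-fractions construction is stable under localisation") does not work -- total rings of fractions do not commute with localisation in general -- whereas the compatibility itself is true and follows from the universal property. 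Note finally that the paper proves nothing here: its proof is the single citation of \cite{Stacks}, Tag 0EUT, where existence, the universal property, seminormality, and all gluing issues are handled at once for arbitrary schemes. If you replace your affine step by Swan's construction (the actual one behind Tag 0EUT) and deduce localisation compatibility from the universal property, your reduce-and-glue outline becomes a correct, self-contained alternative to that citation.
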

\begin{proof}
See \cite{Stacks}, Tag 0EUS (3).
\end{proof}\\
Now suppose that $S$ is a reduced Noetherian scheme. Let $\eta(S)$ be the disjoint union of the spectra of the fields of fractions of the (finitely many) irreducible components of $S.$ We let $\widetilde{S}\to S$ be the normalisation of $S$ in $\eta(S)$ and call $\widetilde{S}$ the \it normalisation of $S.$\rm \\
The following lemmata are certainly well-known; we include proofs for the sake of completeness: 
\begin{lemma}
Let $S$ be a scheme. Then the morphism $\varsigma\colon S^{\mathrm{sn}}\to S$ is an isomorphism if and only if $S$ is seminormal.
\end{lemma}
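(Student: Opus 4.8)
The plan is to prove the two implications separately; the forward one is formal, while the reverse one contains all of the content. Suppose first that the structure morphism $c\colon S^{\mathrm{sn}}\to S$ is an isomorphism. Then $S\cong S^{\mathrm{sn}}$, which is seminormal by Proposition \ref{seminormalisationexprop}, and since the condition of Definition \ref{seminormaldef} is visibly invariant under isomorphisms of schemes, $S$ is seminormal. This settles one direction.

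For the converse I would assume that $S$ is seminormal and reduce the assertion that $c$ is an isomorphism to the affine case. This reduction rests on three facts: being an isomorphism is local on the target, seminormality is a local property (\cite{Stacks}, Tag 0EUP), and the formation of the seminormalisation is compatible with restriction to open subschemes. The last point I would extract from the universal property in Proposition \ref{seminormalisationexprop}: for an open $U\subseteq S$, the morphism $c^{-1}(U)\to U$ is again a universal homeomorphism inducing isomorphisms on all residue fields, its source $c^{-1}(U)$ is seminormal (an open subscheme of a seminormal scheme being seminormal), and one checks that it enjoys the universal property of $U^{\mathrm{sn}}\to U$, so that it is canonically identified with the latter. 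Since a seminormal scheme is reduced, it then suffices to prove the claim for $S=\Spec A$ with $A$ a reduced ring.

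In the affine case the seminormalisation is $\Spec{}^+\!A$, where $A\subseteq{}^+\!A$ is the ring-theoretic seminormalisation of $A$ inside its normalisation and $c$ corresponds to this inclusion; the problem thus reduces to the purely algebraic assertion that the elementary condition of Definition \ref{seminormaldef} on $A$ is equivalent to the equality $A={}^+\!A$, after which $c$ is an isomorphism over each affine open and hence globally. I expect this algebraic identification to be the main obstacle, since it is exactly the point at which the hands-on description of seminormality (via $x^3=y^2\Rightarrow x=a^2,\ y=a^3$) must be matched against the universal property defining ${}^+\!A$; I would dispatch it by appealing to the ring-theoretic development of seminormalisation in \cite{Stacks}, Tag 0EUK and the surrounding tags rather than reproving it. (Alternatively one can argue globally: any section $d$ of $c$ satisfies $c\circ(d\circ c)=c$, so $d\circ c$ is an endomorphism of $S^{\mathrm{sn}}$ over $S$ and hence the identity by the universal property, whence $c$ is an isomorphism; this reduces the converse to producing a single section of $c$, which is again the content of the affine computation.)
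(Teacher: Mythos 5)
Your forward direction and your reduction to the affine reduced case are correct, and they agree with the paper's proof (the compatibility of seminormalisation with open restriction you invoke is exactly the paper's Lemma \ref{loclemma}). The problem is everything after that: the affine statement, which you yourself identify as ``the main obstacle,'' carries the entire content of the lemma, and your proposal does not prove it. Saying you would ``dispatch it by appealing to \cite{Stacks}, Tag 0EUK and the surrounding tags'' names no specific statement; and if the equivalence were directly citable in this form, the lemma would need no proof at all --- the paper explicitly includes proofs of these well-known lemmata precisely in order to supply this step. There is also a genuine imprecision in your setup: you describe the affine seminormalisation as a subring ${}^{+}\!A$ of the normalisation of $A$. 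That is Traverso's classical construction, valid for reduced Noetherian (Mori) rings, but the lemma is stated for arbitrary schemes. For a general reduced ring, an element $b$ with $b^{2},b^{3}\in A$ need not lie in the total ring of fractions of $A$ (if $b^{2}$ is a zerodivisor one cannot write $b=b^{3}/b^{2}$), so the seminormalisation cannot in general be realised inside the normalisation; this is why the general construction proceeds differently. Your alternative ``section'' argument is a nice formal observation --- uniqueness in the universal property of Proposition \ref{seminormalisationexprop} does show that any section of $c$ is a two-sided inverse --- but, as you concede, it bottoms out in the same unproved affine computation.

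For comparison, here is how the paper closes exactly this gap. With $S$ affine and seminormal, write $A=\Gamma(S,\Og_S)\subseteq B=\Gamma(S^{\mathrm{sn}},\Og_{S^{\mathrm{sn}}})$ (the morphism $S^{\mathrm{sn}}\to S$ is affine, being a universal homeomorphism, and dominant). By \cite{Stacks}, Tag 0CND --- a concrete structure result for this extension --- every $b\in B$ lies in $A[b_{1},\ldots,b_{n}]$ for elements $b_{i}\in B$ satisfying $b_{i}^{2},b_{i}^{3}\in A[b_{1},\ldots,b_{i-1}].$ One then proves by induction that $A[b_{1},\ldots,b_{i}]=A$ for all $i$: assuming $A[b_{1},\ldots,b_{i-1}]=A,$ the elements $x:=b_{i}^{2}$ and $y:=b_{i}^{3}$ lie in $A$ and satisfy $x^{3}=y^{2},$ so seminormality of $A$ yields $a\in A$ with $a^{2}=x$ and $a^{3}=y$; a short computation gives $(a-b_{i})^{3}=0,$ and reducedness of $B$ forces $b_{i}=a\in A.$ Hence $A=B$ and $c$ is an isomorphism. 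Some argument of this kind (or at minimum a citation of a precisely identified equivalent statement) is what your proposal is missing.
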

\begin{proof}
See \cite{Stacks}, Tag 0EUS (4).
\end{proof}
\begin{lemma}
Let $S$ be a scheme and let $U\to S$ be an open immersion. Then the canonical morphism $U^{\mathrm{sn}}\to U\times_SS^{\mathrm{sn}}$ is an isomorphism. The same is true if the morphism $U\to S$ is a localisation\footnote{i. e., for each affine open subscheme $V\subseteq S,$ the scheme $U\times_SV$ is affine and its ring of global sections is a localisation of $\Gamma(V, \Og_V)$ at a multiplicative subset.}. \label{loclemma}
\end{lemma}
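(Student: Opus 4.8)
The plan is to reduce both assertions to the following intrinsic characterisation of the seminormalisation: if $\pi\colon T\to X$ is a universal homeomorphism inducing isomorphisms on all residue fields and $T$ is seminormal, then the canonical morphism $X^{\mathrm{sn}}\to T$ is an isomorphism. To prove this, first note that the universal property of Proposition~\ref{seminormalisationexprop} produces a canonical $X$-morphism $\phi\colon X^{\mathrm{sn}}\to T$, since $\pi$ is a universal homeomorphism inducing isomorphisms on residue fields. By the cancellation property of universal homeomorphisms (and of morphisms inducing isomorphisms on residue fields), $\phi$ is itself a universal homeomorphism inducing isomorphisms on residue fields. As $T$ is seminormal, the preceding Lemma gives $T^{\mathrm{sn}}=T$, so the universal property applied to $T$ yields a $T$-morphism $\psi\colon T\to X^{\mathrm{sn}}$ with $\phi\circ\psi=\mathrm{id}_T$; and $\psi\circ\phi$ is an $X$-endomorphism of $X^{\mathrm{sn}}$, hence equals the identity by the uniqueness clause of the universal property. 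Thus $\phi$ is an isomorphism.

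Granting this, I would apply it with $X=U$ and $T=U\times_SS^{\mathrm{sn}}$. The morphism $T\to U$ is the base change of $S^{\mathrm{sn}}\to S$ along $U\to S$, hence a universal homeomorphism inducing isomorphisms on residue fields, and the canonical morphism $U^{\mathrm{sn}}\to T$ furnished by the principle is exactly the one in the statement. It therefore remains only to show that $T$ is seminormal. Here I would observe that $T\to S^{\mathrm{sn}}$ is the base change of $U\to S$ along the affine morphism $S^{\mathrm{sn}}\to S$, so it is again an open immersion in the first case and a localisation (in the sense of the footnote) in the second case; consequently $T$ is obtained from the seminormal scheme $S^{\mathrm{sn}}$ by one of these two operations.

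In the open-immersion case this finishes the argument at once: seminormality is a local property by \cite{Stacks}, Tag 0EUP, so every open subscheme of a seminormal scheme is seminormal. In the localisation case I would reduce to the affine situation: over an affine open $V=\Spec A\subseteq S$ the open-immersion case already gives $S^{\mathrm{sn}}\times_SV\cong V^{\mathrm{sn}}=\Spec A^{\mathrm{sn}}$ (the seminormalisation being affine because universal homeomorphisms are affine by \cite{Stacks}, Tag 04DE), whence $T\times_SV$ is the spectrum of a localisation of the seminormal ring $A^{\mathrm{sn}}$. The genuine technical heart of the lemma is therefore the assertion that a localisation of a seminormal ring is again seminormal, which I expect to verify directly from Definition~\ref{seminormaldef} by clearing denominators in an identity $x^3=y^2$ (using that localisations of reduced rings are reduced in order to control the resulting element), or equivalently via the criterion of \cite{Stacks}, Tag 0CND already used in the preceding Lemma. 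By comparison, the formal manipulation of universal properties and the cancellation property of universal homeomorphisms are routine.
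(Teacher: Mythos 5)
Your proof is correct and follows essentially the same route as the paper's: the paper likewise establishes that $U\times_S S^{\mathrm{sn}}\to U$ is a universal homeomorphism inducing isomorphisms on residue fields and that $U\times_S S^{\mathrm{sn}}$ is seminormal (citing \cite{Stacks}, Tag 0EUP), and then invokes the intrinsic characterisation of the seminormalisation (\cite{Stacks}, Tag 0EUS (4)), which is precisely the statement you re-derive formally from the universal property of Proposition \ref{seminormalisationexprop}. The one step you defer --- that a localisation of a seminormal ring is seminormal --- is true and routine by your clearing-denominators strategy (with the minor caveat that $A\to S^{-1}A$ need not be injective, so after clearing denominators one must multiply by a further element of the multiplicative set to obtain a genuine identity $x^3=y^2$ in $A$, and use reducedness for uniqueness), and it is exactly the content carried by the paper's citation of Tag 0EUP in the localisation case.
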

\begin{proof}
By \cite{Stacks}, Tag 0EUP, the scheme $U\times_SS^{\mathrm{sn}}$ is seminormal if $U\to S$ is an open immersion, and an elementary calculation shows that this remains true after localisation. Hence it suffices to prove that the morphism $U\times_SS^{\mathrm{sn}} \to U$ is a universal homeomorphism which induces isomorphisms at all residue fields (\cite{Stacks}, Tag 0EUS (4)). But this is clear since both claims hold for the map $\varsigma\colon S^{\mathrm{sn}}\to S$ and are stable under localisation.
\end{proof}
\begin{lemma}
Let $T$ be a normal Noetherian scheme. Then $T$ is seminormal. Moreover, for any reduced Noetherian scheme $S,$ the canonical morphism $\widetilde{S}\to S$ factors through the map $\varsigma\colon S^{\mathrm{sn}}\to S.$ \label{snnormlemma}
\end{lemma}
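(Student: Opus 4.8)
The plan is to treat the two assertions separately, as they are of rather different character. For the first, I would use that seminormality is a local property (as recalled after Definition \ref{seminormaldef}) together with the fact that a Noetherian normal scheme is a disjoint union of integral normal schemes, so that it suffices to verify the condition of Definition \ref{seminormaldef} on an affine open $\Spec A$ with $A$ a normal domain with fraction field $K$. Given $x,y\in A$ with $x^3=y^2$, the case $x=0$ forces $y=0$, so $a=0$ works. If $x\neq 0$, I would set $a:=y/x\in K$ and compute $a^2=y^2/x^2=x$ and $a^3=xa=y$. Since $a^2=x\in A$, the element $a$ is integral over $A$, hence lies in $A$ by normality; this gives existence. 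Uniqueness is immediate, since $a^2=x\neq 0$ together with $a^3=y$ forces $a=y/x$, while $x=0$ forces $a=0$.

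For the second assertion, the key observation is that the normalisation depends only on the scheme together with its generic points and their residue fields, and that the universal homeomorphism $\varsigma\colon S^{\mathrm{sn}}\to S$ of Proposition \ref{seminormalisationexprop} preserves precisely this data. I would argue affine-locally, writing $S=\Spec A$ and $S^{\mathrm{sn}}=\Spec B$, so that $\varsigma$ corresponds to a ring map $A\to B$. Since $\varsigma$ is a universal homeomorphism it is integral (\cite{Stacks}, Tag 04DC), whence $B$ is integral over $A$; and since $\varsigma$ is surjective, induces isomorphisms on all residue fields, and $A$ is reduced, the map $A\to B$ is injective. As $S^{\mathrm{sn}}$ is seminormal, hence reduced, $B$ embeds into its total ring of fractions. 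Because $\varsigma$ is a homeomorphism inducing isomorphisms on residue fields, it induces a bijection between the (finitely many) minimal primes of $A$ and those of $B$ with isomorphic residue fields; hence $A$ and $B$ have the same total ring of fractions $Q$, and $A\subseteq B\subseteq Q$.

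Since $B$ is integral over $A$, the integral closure of $A$ in $Q$ contains $B$ and, by transitivity of integrality, coincides with the integral closure of $B$ in $Q$. By the definition of normalisation given before the Lemma, these integral closures are $\widetilde{S}$ and $\widetilde{S^{\mathrm{sn}}}$ respectively, so $\widetilde{S}=\widetilde{S^{\mathrm{sn}}}$ canonically, and the normalisation morphism of $S$ factors as $\widetilde{S}=\widetilde{S^{\mathrm{sn}}}\to S^{\mathrm{sn}}\overset{\varsigma}{\to}S$. These local factorisations are compatible with restriction to smaller affine opens, since the identification $\widetilde{S}=\widetilde{S^{\mathrm{sn}}}$ is canonical, and therefore glue to the desired global factorisation of $\widetilde{S}\to S$ through $S^{\mathrm{sn}}\to S$.

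The normal-domain computation for the first claim is routine. The step requiring care is the identification of total rings of fractions in the second claim: one must check that $\varsigma$ preserves the residue fields specifically at the generic points of the irreducible components, not merely at closed points, so that the two integral closures are genuinely taken inside one and the same ring $Q$. This is where both hypotheses packaged in Proposition \ref{seminormalisationexprop} are used decisively — the universal homeomorphism property yields integrality of $B$ over $A$ and the bijection of minimal primes, while the isomorphisms on residue fields identify the corresponding function fields.
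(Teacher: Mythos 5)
Your proof is correct and follows essentially the same route as the paper: the first part is the identical $y/x$ computation in a normal domain, and the second part likewise reduces to the affine case, embeds $\Gamma(S^{\mathrm{sn}}, \Og_{S^{\mathrm{sn}}})$ into the total ring of fractions of $\Gamma(S,\Og_S)$, and uses that a universal homeomorphism is integral to conclude that this ring lands inside the integral closure, giving the factorisation. The only cosmetic difference is that you produce the embedding into the total ring of fractions by hand (via the bijection on minimal primes and the residue-field isomorphisms), where the paper instead invokes Lemma \ref{loclemma} on compatibility of seminormalisation with localisation.
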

\begin{proof}
For the first claim, we may assume without loss of generality that $T$ is affine and integral. Then $\Gamma(T, \Og_T)$ is an integral domain. Let $x,y\in \Gamma(T, \Og_T)$ such that $x^3=y^2.$ If $x=0,$ then $y=0$ and $x=0^2,$ $y=0^3.$ If $x\neq0,$ then $(y/x)^3 - y=0,$ so the element $y/x$ of $\mathrm{Frac}\, \Gamma(T, \Og_T)$ is integral over $\Gamma(T, \Og_T).$ Since $T$ is normal, this implies that $a:=y/x\in \Gamma(T, \Og_T).$ Hence $y=ax,$ which implies that $x^3=y^2=a^2x^2,$ so $x=a^2.$ This, in turn, implies that $y=ax=a^3.$ Therefore $T$ is seminormal. \\
For the second claim, we may once more assume that $S$ is affine. Let $M$ be the total ring of fractions of $\Gamma(S, \Og_S).$ By Lemma \ref{loclemma}, the morphism $\Spec M = \Spec M^{\mathrm{sn}} \to \Spec M\times_SS^{\mathrm{sn}}$ is an isomorphism. In particular, we obtain a morphism $\Gamma(S^{\mathrm{sn}}, \Og_{S^{\mathrm{sn}}})\to M\otimes_{\Gamma(S, \Og_S)} \Gamma(S^{\mathrm{sn}}, \Og_{S^{\mathrm{sn}}}) = M.$ Since the morphism $\varsigma\colon S^{\mathrm{sn}}\to S$ is a universal homeomorphism and therefore integral (\cite{Stacks}, Tag 04DF), we obtain our desired factorisation $\widetilde{S} \to S^{\mathrm{sn}} \to S$ of $\widetilde{S} \to S.$ 
\end{proof}
\begin{corollary}
Let $S$ be a reduced Noetherian scheme. Then both morphisms $\widetilde{\varsigma} \colon \widetilde{S} \to S^{\mathrm{sn}}$ and $\varsigma\colon S^{\mathrm{sn}}\to S$ are scheme-theoretically dominant, i. e., the canonical maps $\Og_S \to \varsigma_\ast \Og_{\mathrm{sn}}$ and $\Og_{S^{\mathrm{sn}}}\to \widetilde{\varsigma}_\ast \Og_{\widetilde{S}}$ of sheaves on the small Zariski (and étale) sites are injective. In particular, if the normalisation morphism $\nu\colon \widetilde{S}\to S$ is finite, then so are both $\varsigma$ and $\widetilde{\varsigma}.$ \label{schdomcor}
\end{corollary}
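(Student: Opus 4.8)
The plan is to reduce everything to a statement about rings and to exploit the factorisation $\nu=\varsigma\circ\widetilde{\varsigma}$ of the normalisation morphism supplied by Lemma \ref{snnormlemma}. Since universal homeomorphisms are affine (\cite{Stacks}, Tag 04DE) and the normalisation is affine, and since scheme-theoretic dominance can be checked on an affine open cover, I would assume $S=\Spec A$ with $A$ reduced Noetherian, writing $S^{\mathrm{sn}}=\Spec B$ and $\widetilde{S}=\Spec\widetilde{A}$. By Lemma \ref{loclemma} the affine open $\Spec A$ really does compute the seminormalisation, i.e. $\varsigma^{-1}(\Spec A)=(\Spec A)^{\mathrm{sn}}$, and the normalisation localises in the same way, so on global sections I obtain ring maps $A\to B\to\widetilde{A}$ whose composite is the inclusion of $A$ into the integral closure $\widetilde{A}$ of $A$ inside its total ring of fractions $M$. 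The two assertions are then exactly that $A\to B$ and $B\to\widetilde{A}$ are injective.

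The first assertion is immediate. Since $A$ is reduced, $A\hookrightarrow M$, so the composite $A\to B\to\widetilde{A}\subseteq M$ is injective, and hence the first arrow $A\to B$ is injective. Phrased sheaf-theoretically, the canonical map $\Og_S\to\nu_\ast\Og_{\widetilde{S}}$ factors as $\Og_S\to\varsigma_\ast\Og_{S^{\mathrm{sn}}}\to\varsigma_\ast\widetilde{\varsigma}_\ast\Og_{\widetilde{S}}=\nu_\ast\Og_{\widetilde{S}}$, and injectivity of the composite forces injectivity of the first map. This gives scheme-theoretic dominance of $\varsigma$.

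The crux is the injectivity of $B\to\widetilde{A}$, for which the ``injective composite'' trick is useless, as it never propagates to the upper map. The key point is that the non-zerodivisors of $A$ remain non-zerodivisors in $B$. Indeed, $B$ is reduced, being seminormal by Proposition \ref{seminormalisationexprop}, and the inclusion $A\subseteq B$ is integral with $\varsigma$ a homeomorphism, so the minimal primes of $B$ contract bijectively onto those of $A$. Since in a reduced ring every zerodivisor lies in some minimal prime, any $s\in A$ avoiding all minimal primes of $A$ also avoids all minimal primes of $B$, hence is a non-zerodivisor in $B$. Consequently $B\hookrightarrow S_0^{-1}B$, where $S_0$ denotes the set of non-zerodivisors of $A$; but $S_0^{-1}B=M\otimes_A B$, which equals $M$ by the computation already carried out (via Lemma \ref{loclemma}) in the proof of Lemma \ref{snnormlemma}. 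Thus $B\hookrightarrow M$, and since $B$ is integral over $A$ it lands inside the integral closure $\widetilde{A}$; the resulting inclusion $B\subseteq\widetilde{A}$ is precisely the injective map $B\to\widetilde{A}$ we wanted, proving scheme-theoretic dominance of $\widetilde{\varsigma}$.

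Finally, the inclusions $A\subseteq B\subseteq\widetilde{A}\subseteq M$ make the finiteness claims formal. If $\nu$ is finite, then $\widetilde{A}$ is a finite $A$-module; as $A$ is Noetherian, its $A$-submodule $B$ is then also a finite $A$-module, so $\varsigma$ is finite, and $\widetilde{A}$, being finite over $A\subseteq B$, is a fortiori finite over $B$, so $\widetilde{\varsigma}$ is finite as well. I expect the main obstacle to be precisely the injectivity of $B\to\widetilde{A}$: the easy dominance of the composite $\nu$ does not descend to the upper map $\widetilde{\varsigma}$, and one genuinely needs the reducedness of the seminormalisation together with control of its minimal primes through the homeomorphism $\varsigma$ in order to place $B$ inside the total ring of fractions.
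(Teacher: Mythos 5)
Your proof is correct, and on the decisive point it takes a genuinely different route from the paper's. For the first assertion you argue exactly as the paper does: injectivity of the composite $\Og_S \to \varsigma_\ast\Og_{S^{\mathrm{sn}}} \to \nu_\ast\Og_{\widetilde{S}}$ forces injectivity of the first arrow. For the second assertion the paper stays entirely sheaf-theoretic: it asserts that $\varsigma_\ast\Og_{S^{\mathrm{sn}}} \to \varsigma_\ast\widetilde{\varsigma}_\ast\Og_{\widetilde{S}}$ is injective, observes that the kernel $\mathscr{F}$ of $\Og_{S^{\mathrm{sn}}} \to \widetilde{\varsigma}_\ast\Og_{\widetilde{S}}$ then satisfies $\varsigma_\ast\mathscr{F}=0$, and concludes $\mathscr{F}=0$ because $\varsigma$ is a homeomorphism. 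Since $\varsigma$ is affine, injectivity after applying $\varsigma_\ast$ is equivalent to the injectivity being proved, so the entire content of the paper's argument sits in its ``we also see'' step, which is asserted rather than derived --- and, as you correctly observe, it does not follow from injectivity of the composite. Your minimal-primes argument supplies exactly this content: reducedness of $B=\Gamma(S^{\mathrm{sn}},\Og_{S^{\mathrm{sn}}})$ (Proposition \ref{seminormalisationexprop}), together with the bijection on minimal primes induced by the homeomorphism $\varsigma$ and integrality of $A\subseteq B$, shows that non-zerodivisors of $A$ stay non-zerodivisors in $B$, whence $B\hookrightarrow M\otimes_AB=M$ (the identification via Lemma \ref{loclemma} already used in the proof of Lemma \ref{snnormlemma}) and then $B\subseteq\widetilde{A}$ by integrality; this inclusion is indeed the map $\widetilde{\varsigma}^{\#}$, since that is literally how the factorisation of Lemma \ref{snnormlemma} is constructed. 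What the paper's formal argument buys is brevity and no need to pass to affine charts; what yours buys is that the key injectivity is actually proved rather than asserted, and you additionally make the final finiteness claim explicit ($B$ is an $A$-submodule of the finite module $\widetilde{A}$ over the Noetherian ring $A$), which the paper leaves as immediate.
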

\begin{proof}
The normalisation morphism $\nu$ is scheme-theoretically dominant by construction. Since $\nu=\varsigma\circ\widetilde{\varsigma},$ we obtain a factorisation $\Og_S \to \varsigma_\ast \Og_{S^{\mathrm{sn}}}\to \nu_\ast \Og_{\widetilde{S}}.$ This immediately implies that the map $\Og_S \to \varsigma_\ast \Og_{S^{\mathrm{sn}}}$ is injective. We also see that the morphism $\varsigma_\ast \Og_{S^{\mathrm{sn}}}\to \varsigma_\ast \widetilde{\varsigma}_\ast \Og_{\widetilde{S}}$ is injective. If $\mathscr{F}$ denotes the kernel of the map $\Og_{S^{\mathrm{sn}}}\to \widetilde{\varsigma}_\ast \Og_{\widetilde{S}},$ then this implies that $\varsigma_\ast\mathscr{F}=0.$ Since $\varsigma$ is an homeomorphism, this implies that $\mathscr{F}=0,$ so $\Og_{S^{\mathrm{sn}}} \to \widetilde{\varsigma}_\ast \Og_{\widetilde{S}}$ is indeed injective.
\end{proof}
\subsubsection{Push-outs of schemes} \label{pushoputpara}
We shall now recall several results regarding push-outs (i. e., fibre coproducts) in the category of schemes. It is well-known that general push-outs of schemes need not exist. However, there are several important cases where push-outs do exist. They have been studied by Ferrand \cite{Fer} and (independently) by Schwede \cite{Schw}. The behaviour of push-outs under arbitrary base change has been studied by the author\footnote{See also \cite{Brion}, proof of Lemma 2.2, and the references therein.} \cite{Ov}, Paragraph 4, where it was shown that push-outs can be used to construct models of some singular curves over discrete valuation rings, and to study their Picard functors. We shall extend those methods to the extent necessary for our purposes. As  the language of \cite{Schw} was used in \cite{Ov}, we shall continue using \cite{Schw} as our reference for results on push-outs of schemes. Some similar results are also contained in \cite{Stacks}. Let us begin with the following results, which generalise \cite{Ov}, Proposition 4.0.2. \\
Throughout this Paragraph, we shall work with the following setup: let $f\colon X\to Y$ be a morphism of schemes. Let $t\colon T\to Y$ and $z\colon Z \to Y$ be schemes \it affine \rm over $Y,$ let $\xi \colon Z\to T$ be a morphism over $Y,$ and let $\widetilde{\iota}\colon Z\to X$ be a closed immersion. We summarise this in the following diagram:
$$\begin{tikzcd}
& X \arrow[swap, bend right]{ldd}{f} \\
& Z\arrow[swap]{ld}{z} \arrow[swap]{u}{\widetilde{\iota}} \arrow{r}{\xi} & T\arrow[bend left]{lld}{t} \\
Y
\end{tikzcd}.$$
Moreover, throughout this Paragraph, we shall impose the following \\
\\
$\mathbf{Standing  \,\, assumption}.$ Each point of $Y$ has an open affine neighbourhood $U$ such that the induced morphism $z^{-1}(U)\to f^{-1}(U)$ factors through an open affine subset of $f^{-1}(U).$

\begin{proposition} \rm (Cf. \cite{Fer}, Théorème 7.1 A)) \it 
Assume that the standing assumption is satisfied. Then the push-out $X\cup_ZT$ (taken in the category of ringed spaces) is a scheme. Moreover, the morphisms of ringed spaces $X \to X\cup_ZT$ and $T\to X\cup_ZT$ are morphisms of schemes, which turn $X\cup_ZT$ into a push-out in the category of schemes. \\
There is a canonical morphism $X\cup_ZT \to Y$ which is the push-out of the diagram
$$\begin{CD}
X\\
@AAA\\
Z@>>>T
\end{CD}$$
in the category of schemes over $Y.$ The map $T\to X\cup_ZT$ is a closed immersion and the morphism $X\backslash Z \to X\cup_ZT \backslash T$ is an isomorphism. \label{Schwede}
\end{proposition}
\begin{proof}
We may assume, without loss of generality, that $Y$ is affine; this follows from the fact that, for open subschemes $U\subseteq X$ and $W\subseteq T$ with common intersection $\Omega$ with $Z,$ the push-out $U\cup_{\Omega} W$  is an open subscheme of $X\cup_ZT$ by construction. Then $T$ and $Z$ are affine as well by assumption. Let $V$ be an open affine subset of $X$ through which $\widetilde{\iota}$ factors. By \cite{Schw}, Theorem 3.5, the push-out $V\cup_ZT$ exists in the category of schemes, and is isomorphic to $\Spec(\Gamma(V, \Og_V)\times_{\Gamma(Z, \Og_Z)} \Gamma(T, \Og_T)).$ Moreover, this scheme is the push-out of the relevant diagram in the category of ringed spaces. By \cite{Schw}, Theorem 3.4, the scheme $V\backslash Z$ is canonically an open subscheme of $V\cup_ZT \backslash T.$ Hence we can glue the schemes $X\backslash Z$ and $V\cup_ZT$ along $V\cup_ZT \backslash T.$ One now checks easily that the scheme thus constructed satisfies the universal property of the push-out in the category of schemes. The remaining claims can be proven in a purely formal manner, which will be left to the reader.
\end{proof}\\
The scheme $X\cup_ZT$ we just constructed fits into our setup as follows:

$$\begin{tikzcd}
& X \arrow[swap, bend right]{ldd}{f} \arrow{r}{\psi} & X\cup_ZT \\
& Z\arrow[swap]{ld}{z} \arrow[swap]{u}{\widetilde{\iota}} \arrow{r}{\xi} & T\arrow[bend left]{lld}{t} \arrow[swap]{u}{\iota}\\
Y
\end{tikzcd}.$$

We shall now introduce further assumptions on the morphisms $z,$ $t,$ and $\xi,$ and prove results about the push-outs we just constructed which would fail without these additional conditions. These assumptions will be referred to by the roman numerals (i),...,(v), and they will be \it cumulative, \rm i. e., once introduced, they will remain in place, as will the standing assumption. \\
\\
$\mathbf{Further  \,\, assumptions \, \, I}.$ In addition to the standing assumption, suppose the following: \\
(i) The scheme $Y$ is locally Noetherian, \\
(ii) the morphisms $f\colon X\to Y$ and $t\colon T\to Y$ are of finite type, and\\
(iii) the morphism $\xi \colon Z\to T$ is finite.

\begin{proposition} \rm (Cf. \cite{Brion}, 2.1) \it 
Suppose that the standing assumptions as well as (i), (ii), and (iii) are satisfied. Then the scheme $X\cup_ZT$ is of finite type over $Y.$ Moreover, if $X$ and $T$ are proper over $Y,$ then so is $X\cup_ZT.$ \label{pushfiniteProp}
\end{proposition}
\begin{proof}
We may assume that $Y$ is affine. By the construction of $X\cup_ZT$ from the proof of the previous Proposition, it suffices to show that $V\cup_ZT$ is of finite type. This follows from \cite{Stacks}, Tag 00IT, or the argument from the proof of \cite{Ov}, Proposition 4.0.2. Since the morphism $X\sqcup T \to X \cup_ZT$ is surjective (which follows from the topological construction \cite{Schw}, proof of Theorem 3.4) and $X\sqcup T$ is proper over $Y$ by assumption, we see as in the proof of \cite{Ov}, Proposition 4.0.2 that the morphism $X\cup_ZT\to Y$ is universally closed. It follows from the proof of \cite{Stacks}, Tag 00IT that the morphism $X\sqcup T \to X\cup_ZT$ is finite, and we already know that it is surjective. Hence \cite{Stacks}, Tag 09MQ, implies that the map $X\cup_ST \to Y$ is separated. Putting things together, we find that $X\cup_ZT$ is proper over $Y,$ as claimed. 
\end{proof}

\begin{proposition} \rm (Cf. \cite{Fer}, Théorème 7.1 A)) \it 
Assuming the standing assumption, as well as (i), (ii), and (iii), the following assertions hold: \\
(a) The map $X\to X\cup_ZT$ is finite, and\\
(b) the canonical morphism 
$Z\to X\times_{X\cup_ZT} T$ is an isomorphism.\label{Preimageprop}
\end{proposition}
\begin{proof}
As before, we may assume that $Y$ is affine, which implies that $Z$  and $T$ are affine as well. We may choose an open affine subset $V$ of $X$ through which the map $Z\to X$ factors. Claim (b) is then equivalent to the assertion that the map $Z\to V\times_{V\cup_ZT}T$ is an isomorphism. By Proposition \ref{Schwede}, all we must prove is that the map 
$$\Gamma(V, \Og_V)\otimes_{\Gamma(V, \Og_V)\times_{\Gamma(Z, \Og_Z)}\Gamma(T, \Og_T)}\Gamma(T, \Og_T) \to \Gamma(Z, \Og_Z)$$ is an isomorphism of rings. Since the map $\Gamma(V, \Og_V)\times_{\Gamma(Z, \Og_Z)}\Gamma(T, \Og_T)\to \Gamma(T, \Og_T)$ is surjective, every element of the tensor product above can be written as $\alpha\otimes 1$ for some $\alpha\in \Gamma(V, \Og_V).$ If this element vanishes in $\Gamma(Z, \Og_Z),$ then the same is true for $\alpha.$ This implies that $\alpha$ is the image of $(\alpha, 0)\in \Gamma(V, \Og_V)\times_{\Gamma(Z, \Og_Z)}\Gamma(T, \Og_T),$ which means that $\alpha\otimes 1=0.$ Hence the map above is injective; its surjectivity follows immediately from the fact that $Z\to V$ is a closed immersion. Claim (a) follows from the finiteness of $X\sqcup T \to X\cup_ZT,$ which we have already established in the proof of the preceding Proposition.  
\end{proof}\\
Having established the existence of push-outs under certain conditions, we shall now prove that, under appropriate flatness assumptions, push-outs commute with arbitrary base change, generalising Propositions 4.0.3, 4.0.4, and 4.0.5 from \cite{Ov} (see also the proof of Lemma 2.2 in \cite{Brion}). This will be used to study Picard functors by methods introduced in \cite{Ov}, which we shall generalise. It is not difficult to prove that push-outs commute with flat base change, which has already been observed by Ferrand \cite{Fer}, Lemme 4.4.\\
\\
We now impose the following \\
\\
$\mathbf{Further  \,\, assumptions \, \, II}.$ In addition to the standing assumption and (i), (ii), (iii), suppose the following: \\
(iv) The morphism $\xi\colon Z\to T$ is faithfully flat, and\\ 
(v) the cokernel of the injective map $\xi^\ast\colon t_\ast \Og_T\to z_\ast \Og_Z$ is projective locally in the Zariski topology on $Y$ (\cite{Stacks}, Tag 05JP).

\begin{proposition}
Assuming the standing assumption and (i),...,(v), the following holds: \\
(a) For any scheme $Y'\to Y,$ denote by $X',$ $Z',$ and $T'$ the base changes of $X,$ $Z,$ and $T$ to $Y',$ respectively. Then the morphism \label{Basechangeprop}
$$X'\cup_{Z'}T'\to(X\cup_ZT)\times_YY' $$
is an isomorphism. In particular, the map $X \to X\cup_ZT$ is scheme-theoretically dominant and remains so after any base change $Y'\to Y.$\\
(b) If $X$ is flat over $Y,$ then so is $X\cup_TZ.$
\end{proposition}
\begin{proof}
(a) We may assume that both $Y'$ and $Y$ are affine, and that the cokernel of the map $t_\ast \Og_T\to z_\ast \Og_Z$ is a projective quasi-coherent sheaf on $Y.$ As before, we choose an open affine subscheme $V$ of $X$ through which the map $Z\to X$ factors. We write $V':=V\times_YY'.$ It suffices to show that the morphism $V'\cup_{Z'}T'\to(V\cup_ZT)\times_YY'$ is an isomorphism; this follows from the fact that push-outs are local (see the proof of Proposition \ref{Schwede}). To simplify the notation, we shall write $R,$ $R',$ $A,$ $A',$ $B,$ $B',$ $C,$ and $C'$ for the rings of global sections of $Y,$ $Y',$ $V,$ $V',$ $Z,$ $Z',$ $T,$ and $T',$ respectively. In particular, we have $A'=A\otimes_RR',$ $B'=B\otimes_RR',$ and $C'=C\otimes_RR'.$ We must now prove that the canonical map 
$$(A\times_BC)\otimes_RR'\to A'\times_{B'}C'$$ is an isomorphism, which we shall do by adapting the proof of \cite{Ov}, Proposition 4.0.3. We begin by observing that the maps $C\to B$ and $C'\to B'$ are faithfully flat and hence injective. Now we consider the exact sequence
$$0\to A\times_BC\to A \to B/C\to 0.$$ By assumption, the $R$-module $B/C$ is projective, so this sequence splits. This implies, in particular, that it remains exact after arbitrary base change, and we obtain an exact sequence
$$0\to (A\times_BC)\otimes_RR'\to A' \to (B/C)\otimes_RR'\to 0.$$ The same argument shows that the exact sequence $0\to C\to B \to B/C \to 0$ remains exact after tensoring with $R'.$ Hence we obtain a canonical isomorphism $B'/C'\to (B/C)\otimes_RR'$ However, the kernel of the morphism $A'\to B'/C'$ is clearly the same as $A'\times_{B'}C',$ which proves our claim. \\
(b) This claim is local in the Zariski topology on both source and target, so we may again assume that $Y,$ $X,$ $Z,$ and $T$ are all affine. With the notation as in the proof of (i), we must show that if $A$ is flat over $R,$ then $\mathrm{Tor}^R_1(A\times_BC,-)=0.$ This follows immediately from the long exact sequence associated with $0 \to A\times_BC\to A \to B/C \to 0$ and  the fact that both $A$ and $B/C$ are flat over $R.$ 
\end{proof}\\
The following is a generalisation of \cite{Ov}, Proposition 4.0.4 (see also \cite{Fer}, Lemme 4.4 and \cite{Stacks}, Tag 0D2K). We include a proof since the result is crucial for later applications.
\begin{proposition} 
(a) Assuming the standing assumption and (i),...,(iii), the following holds: for any flat morphism $F\to X\cup_ZT$ of schemes, the canonical map 
$$\lambda\colon (F\times_{X\cup_ZT}X)\cup_{F\times_{X\cup_ZT}Z}(F\times_{X\cup_ZT}T)\to F$$ is an isomorphism. \\
(b) Moreover, under the standing assumption as well as (i),..., (v), part (a) remains true after arbitrary base change $Y'\to Y$ (i. e., even if $Y'$ is not locally Noetherian).  \label{BasechangepropII}
\end{proposition}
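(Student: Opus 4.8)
The plan is to reduce the assertion to a statement of commutative algebra about the flat base change of a fibre-product ring, and then to observe that this algebraic statement is insensitive to Noetherian hypotheses, which will simultaneously take care of the final sentence. Throughout I abbreviate $W:=X\cup_ZT$.

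First I would note that the claim that $\lambda$ is an isomorphism may be checked after restricting $W$ to the members of an open cover, since the push-out defining the source of $\lambda$ is formed by gluing over such a cover by the construction in Proposition \ref{Schwede}, and over each chart we may further localise on $F$. Over the open locus of $W$ not meeting the (common) image of $Z$ and $T$, we have $W\cong X\setminus Z$ and both $F\times_WZ$ and $F\times_WT$ are empty, so $\lambda$ is trivially an isomorphism there. It therefore suffices to treat an affine chart of the shape furnished by Proposition \ref{Schwede}: we may assume $Y=\Spec R$, $V=\Spec A$, $Z=\Spec B$, $T=\Spec C$, so that the relevant piece of $W$ is $\Spec D$ with $D=A\times_BC$, that the preimage of this chart in $X$ is $V$, and (after shrinking) that $F=\Spec E$ with $E$ a flat $D$-algebra. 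The three fibre products then become $\Spec(E\otimes_DA)$, $\Spec(E\otimes_DB)$, and $\Spec(E\otimes_DC)$; since $F\times_WZ\to F\times_WX$ is the base change of the closed immersion $Z\to V$, it is again a closed immersion, so Proposition \ref{Schwede} presents the source of $\lambda$ as $\Spec\bigl((E\otimes_DA)\times_{E\otimes_DB}(E\otimes_DC)\bigr)$, with $\lambda$ induced by $e\mapsto(e\otimes1,e\otimes1)$.

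The heart of the matter is the following. Because $Z\to X$ is a closed immersion, the map $A\to B$ is surjective, whence the sequence $0\to D\to A\oplus C\to B\to 0$, with right-hand map $(a,c)\mapsto\bar a-\bar c$, is exact. As $E$ is flat over $D$, tensoring preserves exactness and yields $0\to E\to(E\otimes_DA)\oplus(E\otimes_DC)\to E\otimes_DB\to 0$. The kernel of the right-hand map is by definition $(E\otimes_DA)\times_{E\otimes_DB}(E\otimes_DC)$, and exactness identifies it with the image of $E=E\otimes_DD$. This identification is precisely $\lambda$, which is therefore an isomorphism, proving the first assertion.

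For the final sentence, set $W':=W\times_YY'$. By Proposition \ref{Basechangeprop} we have $W'\cong X'\cup_{Z'}T'$, and locally this is $\Spec(A'\times_{B'}C')$ with $A'=A\otimes_RR'$ and so on. Writing $F':=F\times_YY'$, the cancellation $F\times_YY'=F\times_WW'$ shows that $F'\to W'$ is flat, and using $X'=X\times_WW'$ (and likewise for $Z$ and $T$) one checks $F'\times_{W'}X'=(F\times_WX)\times_YY'$, with the analogous identities for $Z$ and $T$. Thus the final assertion is exactly the first assertion applied to the flat morphism $F'\to W'$. The only inputs used above were the push-out presentation of $W'$ (supplied by Propositions \ref{Basechangeprop} and \ref{Schwede}), the surjectivity of $A'\to B'$ (immediate from right-exactness of $-\otimes_RR'$), and the flatness of $E'$ over $D'$; none of these requires $Y'$ to be locally Noetherian, so the argument goes through verbatim. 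The only real care needed is in this last step, namely verifying that the reduction to the affine fibre-product picture and the identification of the various fibre products survive over a non-Noetherian base; the algebraic core is entirely formal once flatness is in hand.
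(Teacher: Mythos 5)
Your proposal is correct, and its skeleton --- reduce to the affine chart provided by Proposition \ref{Schwede}, prove the resulting statement about the fibre-product ring, and deduce the non-Noetherian base-change claim from Proposition \ref{Basechangeprop} plus the observation that the algebra is insensitive to Noetherian hypotheses --- is the same as the paper's. The genuine difference lies in the algebraic core. The paper (writing $D$ for $\Gamma(F,\Og_F)$, and $A$, $B$, $C$ for the rings of global sections of $V$, $Z$, $T$) proves that $D\to(D\otimes_{A\times_BC}A)\times_{D\otimes_{A\times_BC}B}(D\otimes_{A\times_BC}C)$ is an isomorphism by an element chase adapted from \cite{Ov}, Proposition 4.0.4: it first uses the faithful flatness of $C\to B$ (condition (iv)) to realise the target inside $D\otimes_{A\times_BC}A$, then writes a general element as $\sum_i d_i\otimes a_i$ and corrects it modulo the ideal $\ker(A\to B)\subseteq A\times_BC$ to exhibit it as an elementary tensor $d\otimes 1.$ You instead tensor the short exact sequence $0\to A\times_BC\to A\oplus C\to B\to 0$ (exact solely because $Z\to V$ is a closed immersion, so $A\to B$ is surjective) with the flat algebra of global sections of $F$ and read the isomorphism off directly. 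This is in substance the argument of \cite{Stacks}, Tag 08KQ, which the paper explicitly cites as sufficient but then declines to reproduce in favour of its own element-theoretic variant. Your route is shorter, and it makes visible something the paper only partially records in its subsequent Remark: the first assertion needs neither condition (iv) nor condition (v), only flatness of $F\to X\cup_ZT$ and the closed-immersion hypothesis, whereas the paper's chase does invoke (iv), and its Remark only notes that (v) is dispensable. The treatments of the final base-change statement agree, though you are more explicit about the cancellation identities such as $F'\times_{W'}X'=(F\times_WX)\times_YY'.$
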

\begin{proof}
We may once again assume, without loss of generality, that $Y$ (and hence $Z$ and $T$) are affine. As before, we choose an open affine subscheme $V$ of $X$ through which $Z\to X$ factors. We may then replace $F$ by the pre-image of $V$ in $F$ and assume that the morphism $F\to X\cup_ZT$ factors through $V\cup_ZT.$ Moreover, we may assume that $F$ is affine. Both those claims follow from the fact that for any open affine $U\subseteq F,$ we have 
$$\lambda^{-1}(U)=(U\times_{X\cup_ZT}X)\cup_{U\times_{X\cup_ZT}Z}(U\times_{X\cup_ZT}T);$$ this is a consequence of the fact that the push-outs we consider are already push-outs in the category of ringed spaces. This allows us to translate the claim into a purely algebraic assertion: with the notation from the proof of Proposition \ref{Basechangeprop} and $D:=\Gamma(F, \Og_F),$ we must prove that the canonical morphism 
$$\lambda^\ast\colon D\to (D\otimes_{A\times_BC}A)\times_{D\otimes_{A\times_BC}B}(D\otimes_{A\times_BC}C)$$ is an isomorphism. This follows from \cite{Stacks}, Tag 08KQ. We give a sightly different proof which is an adaption of the proof of \cite{Ov}, Proposition 4.0.4. We begin by observing that the map $D\otimes_{A\times_BC}C\to D\otimes_{A\times_BC}B$ is injective because it is faithfully flat. Hence the target of $\lambda^\ast$ is equal to the set of all elements of $\delta\in D\otimes_{A\times_BC}A$ whose image in $D\otimes_{A\times_BC}B$ comes from $D\otimes_{A\times_BC}C.$ Since the map $A\times_BC \to C$ is surjective, every element of $D\otimes_{A\times_BC}C$ is an elementary tensor. Let $\delta$ be an element of the target of $\lambda^\ast$ and let $\overline{\delta}$ bet its image in $D\otimes_{A\times_BC}B.$ Then we can find an element $d\in D$ such that $\overline{\delta}=d\otimes 1$ in $D\otimes _{A\times_BC}B.$
Let $I:=\ker(A\to B).$ Because the sequence 
$$D\otimes_{A\times_BC}I\to D\otimes_{A\times_BC}A\to D\otimes_{A\times_BC}B\to 0$$ is exact, we can find elements $\eta_1,..., \eta_r \in I$ (for some $r\in \N$) such that 
$$\delta - d\otimes 1 = \sum_j d'_j\otimes \eta_j$$ in $D\otimes_{A\times_BC}A$ for appropriately chosen elements $d'_j\in D.$ However, since $I\subseteq A\times_BC,$ there exists $d'\in D$ with the property that $\sum_j d'_j\otimes \eta_j = d'\otimes 1.$ This shows that $\delta= (d+d')\otimes 1,$ so that $\lambda^\ast$ is surjective. Because $D$ is flat over $A\times_BC,$ the map $D\to D\otimes_{A\times_BC}A$ is injective, which shows that $\lambda^\ast$ is injective as well. Finally, note that the Noetherian hypothesis was only used in Proposition \ref{Schwede} in order to prove that the push-out is of finite type over $Y,$ which we have not used in this proof. Hence the final claim follows from Proposition \ref{Basechangeprop}.
\end{proof}\\
\\
For later use, we shall at this point study line bundles on the push-out $X\cup_ZT$ in terms of line bundles on $X,$ $Z,$ and $T.$ This is inspired by \cite{Stacks}, Tag 0D2G. Once again, we keep the notation and assumptions from above (i. e., the standing assumption and (i),...,(v)). Let $\xi$ and $\widetilde{\iota}$ be the morphisms already used in the setup at the beginning of this paragraph. We define a category $\mathcal{C}$ as follows: the objects are triples $(\mathscr{M}, \mathscr{N}, \lambda),$ where $\mathscr{M}$ and $\mathscr{N}$ are line bundles on $X$ and $T,$ respectively, and $\lambda \colon \widetilde{\iota}^\ast \mathscr{M}\to \xi^\ast\mathscr{N}$ is an isomorphism. A morphism $(\mathscr{M}, \mathscr{N}, \lambda)\to (\mathscr{M}', \mathscr{N}', \mathscr{\lambda'})$ in $\mathcal{C}$ is a pair $(\alpha, \beta)$ consisting of morphisms\footnote{By a \it morphism of line bundles \rm we mean a morphism of quasi-coherent sheaves.} $\alpha\colon \mathscr{M}\to \mathscr{M}'$ and $\beta\colon \mathscr{N}\to \mathscr{N'}$ such that $\lambda'\circ \widetilde{\iota}^\ast\alpha=\xi^\ast\beta\circ \lambda.$ (The category $\mathcal{C}$ is a \it fibre product of categories; \rm see \cite{Stacks}, Tag 003R.) Then we have 
\begin{proposition} \rm (Cf. \cite{Brion}, section 2.2) \it
Let $\mathcal{P}$ denote the category of line bundles on $X\cup_ZT.$ Then the functor $\mathcal{P}\to \mathcal{C}$ given by
$$\mathscr{L}\mapsto (x ^ \ast\mathscr{L}, y^\ast \mathscr{L}, \lambda_{\mathscr{L}})$$ is an equivlence of categories. Here, $x\colon X\to X\cup_ZT$ and $y\colon T\to X\cup_ZT$ denote the canonical morphisms, and 
$$\lambda_{\mathscr{L}}\colon \widetilde{\iota}^\ast x^\ast \mathscr{L}\to \xi^\ast y^\ast \mathscr{L}$$ denotes the canonical isomorphism. \label{linebundlesequivprop}
\end{proposition}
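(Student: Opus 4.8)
The plan is to reduce the assertion to the classical Milnor patching theorem for invertible modules over a fibre product of rings, and then to glue the resulting local equivalences. First I would check that the functor $\mathcal{P}\to\mathcal{C}$ is well-defined and functorial: for a line bundle $\mathscr{L}$ on $X\cup_Z T$, the identity $x\circ\iota=y\circ\xi$ of morphisms $Z\to X\cup_Z T$ furnishes the canonical isomorphism $\lambda_{\mathscr{L}}\colon \iota^\ast x^\ast\mathscr{L}\to\xi^\ast y^\ast\mathscr{L}$, and the rest is immediate. Next I would reduce to a local statement on $X\cup_Z T$. Both $\mathcal{P}$ and (via pullback along $x$ and $y$) $\mathcal{C}$ are naturally stacks of categories on $X\cup_Z T$ for the Zariski topology, and the functor is a morphism of stacks, so it may be tested on an open cover. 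In the construction of Proposition \ref{Schwede} the push-out is glued from $(X\cup_Z T)\setminus T\cong X\setminus Z$ and the affine charts $V\cup_Z T$; on the former the functor is visibly the identity on line bundles on $X$ (the data $\mathscr{N}$ and $\lambda$ being supported on $T$, whose preimage under $y$ is empty there), so the only substantial point is the affine chart.

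On such a chart I am reduced, in the notation of Proposition \ref{Basechangeprop}, to $X\cup_Z T=\Spec(A\times_B C)$ with $A=\Gamma(V,\Og_V)$, $B=\Gamma(Z,\Og_Z)$, $C=\Gamma(T,\Og_T)$. Here the closed immersion $\iota$ makes $A\to B$ surjective while condition (iv) makes $C\to B$ faithfully flat, hence injective; thus $A\times_B C$ together with $A$, $B$, $C$ forms a Milnor square. Line bundles become invertible modules, and the claim is exactly that base change along $A\times_B C\to A$ and $A\times_B C\to C$ induces an equivalence between invertible $(A\times_B C)$-modules and triples $(P,Q,\phi)$ with $P$, $Q$ invertible over $A$, $C$ and $\phi\colon P\otimes_A B\xrightarrow{\sim}Q\otimes_C B$. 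This is the content of \cite{Stacks}, Tag 0D2G (compare also \cite{Brion}, section 2.2), restricted to modules of rank one. Full faithfulness reduces to the statement that the sections of a line bundle on the push-out patch from sections over $X$ and $T$ agreeing on $Z$, which follows formally from the construction of the push-out (whose structure sheaf is the corresponding fibre product $x_\ast\Og_X\times_{(x\iota)_\ast\Og_Z}y_\ast\Og_T$) together with Proposition \ref{BasechangepropII}.

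The main obstacle is therefore essential surjectivity in the affine chart: from a triple $(\mathscr{M},\mathscr{N},\lambda)$ one must construct an honest invertible module over $A\times_B C$ that recovers it. Concretely one forms the patched module $M:=P\times_{Q\otimes_C B}Q$, the fibre product taken via $\phi$, and must verify both that $M$ is locally free of rank one over $A\times_B C$ and that $M\otimes_{A\times_B C}A\cong P$ and $M\otimes_{A\times_B C}C\cong Q$ compatibly with $\phi$. It is precisely at this step that the surjectivity of $A\to B$ coming from the closed immersion $\iota$ is indispensable, since it guarantees that elements of $Q\otimes_C B$ lift and that the two base changes of $M$ are as required; the faithful flatness of $C\to B$ ensures the relevant injectivity, exactly as in the proof of Proposition \ref{BasechangepropII}. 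Granting this, the local equivalences are canonical and agree on overlaps with the identity functor on $X\setminus Z$, so they glue to the desired global equivalence $\mathcal{P}\to\mathcal{C}$. Alternatively, one could deduce essential surjectivity geometrically by applying Proposition \ref{BasechangepropII} to the total spaces of the bundles, but the Milnor-patching route seems the most economical.
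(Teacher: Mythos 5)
Your proposal is correct and follows essentially the same route as the paper: the paper likewise reduces to the affine chart (taking $X$ and $Y$ affine, i.e.\ working on $V\cup_ZT$) and invokes the Stacks project's Milnor-patching equivalence over the fibre-product ring $A\times_BC$ (Tag 0D2J, stated for finite locally free modules), then recovers the rank-one case exactly as you indicate, using surjectivity of $X\sqcup T\to X\cup_ZT$ to see that a locally free sheaf whose pullbacks to $X$ and $T$ are invertible has rank one everywhere. The only cosmetic differences are that you make the Zariski gluing over the cover $\{(X\cup_ZT)\setminus T,\ V\cup_ZT\}$ explicit via a stack argument and cite the patching statement directly for invertible modules, whereas the paper compresses the gluing into a ``without loss of generality'' and treats the rank condition as a separate final step.
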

\begin{proof}
We may assume without loss of generality that $X$ and $Y$ are affine. Hence \cite{Stacks}, Tag 0D2J tells us that the claim is true if we replace line bundles by finite locally free modules. We must therefore prove that the equivalence of categories from \it loc. cit. \rm translates line bundles to line bundles. Clearly, if $\mathscr{L}$ is a line bundle then so are $x^\ast \mathscr{L}$ and $y^\ast \mathscr{L}.$ On the other hand, suppose $\mathscr{F}$ is a locally free coherent sheaf on $X\cup_ZT$ such that both $x^\ast \mathscr{F}$ and $y^\ast \mathscr{F}$ are line bundles. Then $\mathscr{F}$ is a line bundle because the map $X\sqcup T\to X\cup_ZT$ is surjective, so the rank of $\mathscr{F}$ is equal to 1 everywhere. 
\end{proof}

\subsubsection{The factorisation theorem for curves} \label{factoristionpara}
In this paragraph, we shall prove a factorisation theorem for finite dominant birational morphisms of reduced curves. (Throughout this paragraph, a \it reduced curve \rm will mean a reduced purely one-dimensional schemes of finite type over a field. A \it birational morphism \rm of reduced curves is a morphism which induces an isomorphism of dense open subsets of source and target.) Our result will generalise \cite{Stacks}, Tag 0C1L, where the factorisation theorem is proven over algebraically closed fields. The proof given in \it loc. cit. \rm can be taken with some minor modifications. 
First recall that a commutative diagram
$$\begin{CD}
X@>>>X'\\
@AAA@AAA\\
Z@>>>T
\end{CD}$$
of schemes is \it co-Cartesian \rm if $X'$ satisfies the universal property of the push-out in the category of schemes. 
\begin{proposition}
Let $\kappa$ be an arbitrary field and let $X$ and $X'$ be reduced curves over $\kappa.$ Moreover, let $\beta\colon X\to X'$ be a finite birational morphism (in particular, the canonical map $\Og_{X'}\to \beta_\ast \Og_X$ is injective, i. e., $\beta$ is scheme-theoretically dominant). Assume that, for every factorisation $X\to X''\to X',$ if both morphisms therein appearing are scheme-theoretically dominant, then at least one of them is an isomorphism. Then either $\beta$ is an isomorphism, or there exists a closed point $x'\in X'$ such that the following assertions hold: \\
(i) The scheme $X\times_{X'}\Spec \kappa(x')$ is isomorphic to $\Spec A,$ where $A$ is a prime algebra over $\kappa(x'),$\\
(ii) if $U$ is the complement of $x'$ in $X',$ the induced morphism $\beta^{-1}(U)\to U$ is an isomorphism. \\
(iii) Let $\alpha \colon \Spec A\to \Spec \kappa(x')$ be the canonical morphism. Then the diagram \label{FactorizationI}
$$\begin{CD}
X@>{\beta}>>X'\\
@AAA@AA{j}A\\
\Spec A@>>{\alpha}>\Spec \kappa(x')
\end{CD}$$
is co-Cartesian.
\end{proposition}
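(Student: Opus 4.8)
The plan is to reduce the statement to a purely ring-theoretic assertion about a finite ring extension admitting no intermediate subring, exactly as in the algebraically closed case of \cite{Stacks}, Tag 0C1L, and then to feed the resulting local description into the push-out machinery of Paragraph \ref{pushoputpara}. The first step is to set up the dictionary between factorisations and subalgebras. Since $\beta$ is finite we have $X=\Spec_{X'}(\beta_\ast \Og_X)$, and $\beta_\ast \Og_X$ is a coherent sheaf of $\Og_{X'}$-algebras containing $\Og_{X'}$ (by scheme-theoretic dominance), with $\mathcal{Q}:=\beta_\ast \Og_X/\Og_{X'}$ coherent and supported on finitely many closed points. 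Every coherent sub-$\Og_{X'}$-algebra $\Og_{X'}\subseteq \mathcal{A}\subseteq \beta_\ast \Og_X$ produces a factorisation $X\to \Spec_{X'}\mathcal{A}\to X'$ into finite, scheme-theoretically dominant morphisms (and $\Spec_{X'}\mathcal{A}$ is again purely one-dimensional, reduced, of finite type), so the minimality hypothesis applies to each such factorisation.

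Next I would use minimality to locate the singularity. If $x_1',x_2'$ were two distinct points of the support of $\mathcal{Q}$, the subalgebra $\mathcal{A}\subseteq\beta_\ast\Og_X$ of sections whose germ at $x_1'$ lies in $\Og_{X',x_1'}$ would give a factorisation in which $X\to\Spec_{X'}\mathcal{A}$ is non-trivial at $x_1'$ and $\Spec_{X'}\mathcal{A}\to X'$ is non-trivial at $x_2'$, so neither map would be an isomorphism --- contradicting the hypothesis. Hence $\mathcal{Q}$ is supported at a single point $x'$ (if it is empty, $\beta$ is an isomorphism and we are done), which is precisely assertion (ii). Localising at $x'$, write $B:=\Og_{X',x'}$ and $C:=(\beta_\ast\Og_X)_{x'}$, a semilocal ring finite over $B$ with $C/B$ of finite length. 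Applying the same dictionary to intermediate rings $B\subseteq B_1\subseteq C$, minimality shows that there is \emph{no} proper intermediate ring between $B$ and $C$.

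I would then extract the prime algebra together with the pinching relation. Set $A:=C/\mathfrak{m}_B C=C\otimes_B \kappa(x')$, the coordinate ring of the fibre $X\times_{X'}\Spec\kappa(x')$, so that (i) amounts to showing $A$ is prime. That $A\neq 0$ and that $\kappa(x')\to A$ is not surjective both follow from Nakayama's lemma, since surjectivity would give $C=B+\mathfrak{m}_B C$, hence $C=B$ and $\beta$ an isomorphism. For the subalgebra condition, given a $\kappa(x')$-subalgebra $\kappa(x')\subseteq A_0\subseteq A$, consider its preimage $B_1:=\pi^{-1}(A_0)$ under $\pi\colon C\twoheadrightarrow A$; this is an intermediate ring $B\subseteq B_1\subseteq C$, so $B_1\in\{B,C\}$. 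If $B_1=C$ then $A_0=A$; if $B_1=B$ then $A_0=\pi(B)$ is a quotient of the field $\kappa(x')$ and hence equals $\kappa(x')$. Thus $A$ is a prime algebra. Taking $A_0=\kappa(x')$ in this argument records the key identity $C\times_A\kappa(x')=\pi^{-1}(\kappa(x'))=B$ at the level of local rings.

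Finally, for the co-Cartesian assertion (iii), I would upgrade the local identity $B=C\times_A\kappa(x')$ to the ring identity $R'=R\times_A\kappa(x')$ on an affine neighbourhood $V=\Spec R'$ of $x'$ over which $x'$ is the only bad point, where $R:=\Gamma(\beta^{-1}(V),\Og)$. Writing $R'':=R\times_A\kappa(x')$, one has $R'\subseteq R''\subseteq R$ with $R''/R'$ supported at $x'$; localising the left-exact sequence $0\to R''\to R\oplus\kappa(x')\to A$ at $x'$ --- which is exact and leaves $\kappa(x')$ and $A$ unchanged, as both are already supported at $x'$ --- identifies $(R'')_{x'}$ with $C\times_A\kappa(x')=B=(R')_{x'}$, whence $R''/R'=0$. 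By the explicit description of affine push-outs (Proposition \ref{Schwede} and \cite{Schw}, Theorem 3.5) this makes the square co-Cartesian over $V$, and gluing with $U=X'\setminus\{x'\}$, over which $\beta$ is an isomorphism, recovers $X'$, so the full square is co-Cartesian. The step I expect to be most delicate is precisely this last passage from the local identity $B=C\times_A\kappa(x')$ to the scheme-theoretic push-out, since fibre products of rings do not commute with localisation in general; the argument above circumvents this by realising the fibre product as a kernel and invoking exactness of localisation together with the fact that $\kappa(x')$ and $A$ are supported at $x'$.
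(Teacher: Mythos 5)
Your proof is correct and takes essentially the same approach as the paper's own proof (both adapt \cite{Stacks}, Tag 0C1L): minimality applied to subalgebras of $\beta_\ast\Og_X$ locates a single singular point $x'$ and shows that the fibre algebra $A$ is prime, and minimality combined with Nakayama's lemma yields the local pinching identity $\Og_{X',x'}=(\beta_\ast\Og_X)_{x'}\times_A\kappa(x').$ The only substantive variation is the final globalisation step: the paper proves the analogous identity on an affine neighbourhood of $x'$ by gluing functions over the covering $\Spec\Og_{X',x'}\sqcup(X'\setminus\{x'\})\to X',$ whereas you deduce it from exactness of localisation together with the fact that the relevant quotient module is supported at $x'$; both arguments are sound.
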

\begin{proof} We proceed as in the proof of \cite{Stacks}, Tag 0C1L. Consider the cokernel $\mathcal{Q}$ of $\Og_{X'}\to \beta_\ast\Og_{X}.$ Then we have $\mathcal{Q}=\mathcal{Q}_1\oplus...\oplus \mathcal{Q}_r$ for some $r\in \N$, where each $\mathcal{Q}_j$ is topologically supported on a closed point $x'_j$ of $X'$ and non-zero. This follows from the assumption that $\beta$ is birational. If $r>1,$ then the $\Og_{X'}$-algebra $\beta_\ast \Og_{X}$ has a proper subalgebra, contradicting our assumption on $\beta.$ Hence we must have $r=0$ (in which case $\beta$ is an isomorphism), or $r=1.$ We shall now prove that, in the latter case, assertions (i), (ii), and (iii) are satisfied. Claim (ii) is immediately clear. To see claim (i), we consider the morphism $\beta_\ast \Og_{X}\to j_\ast A,$ where we denote by $A$ the ring of global functions of the affine scheme $X\times_{X'}\Spec \kappa(x'),$ viewed as a sheaf on $\Spec \kappa(x').$ If $A$ had a proper $\kappa(x')$-subalgebra, then its pre-image in $\beta_\ast \Og_X$ would again give rise to a non-trivial factorisation of $X\to X'.$ To see that $\kappa(x') \to A$ is not an isomorphism, let $B$ be the ring of global functions of the affine scheme $X\times_{X'} \Spec \Og_{X', x'},$  and let $\mathfrak{m}$ be the maximal ideal of $\Og_{X', x'}.$ Consider the exact sequence of $\Og_{X', x'}$-modules
$$0 \to \Og_{X', x'} \to B \to Q \to 0,$$ where $Q$ is the obvious cokernel. If $\kappa(x') \to A$ were an isomorphism, we would have $Q/\mathfrak{m}Q=0,$ so Nakayama's lemma would imply that $\beta$ is an isomorphism. Hence $A$ is indeed a prime algebra over $\kappa(x'),$ which proves (i). All that now remains to be shown is claim (iii). 
To prove this assertion, note that we have a factorisation 
$$X\to X\cup_{\Spec A} \Spec \kappa(x') \to X'$$ of $\beta.$ Because $\kappa(x')\to A$ is not an isomorphism, neither is the first map in this factorisation. Hence our assumptions on $\beta$ imply that the second map is an isomorphism.
\end{proof}
\begin{corollary}
Let $X$ and $X'$ be reduced curves over a field $\kappa.$ Let $\beta\colon X\to X'$ be a finite birational morphism. If $\beta$ is not an isomorphism, then $\beta$ can be written as a composition 
$$X=X_1\overset{\beta_1}{\to}... \overset{\beta_{n-1}}{\to} X_n=X'$$ for some $n\in \N$ 
of morphisms of $\kappa$-schemes such that, for each $i=1,..., n-1,$ there exists a point $x_{i+1}$ in $X_{i+1},$ a prime $\kappa(x_{i+1})$-algebra $A_{i},$ and a closed immersion $\Spec A_{i}\to X_i$ with the property that the diagram
$$\begin{CD}
X_i@>{\beta_i}>>X_{i+1}\\
@AAA@AAA\\
\Spec A_i@>>> \Spec \kappa(x_{i+1})
\end{CD}$$
is co-Cartesian. \label{FactorizationII}
\end{corollary}
\begin{proof}
We shall once more adapt the proof of \cite{Stacks}, Tag 0C1L. By our assumptions on $\beta,$ we know that the cokernel of $\Og_{X'}\to \beta_\ast\Og_X$ is of finite length. We shall argue by induction on the length of $\mathcal{Q}.$  If $\mathrm{length}\, \mathcal{Q}=0,$ then $\beta$ is an isomorphism. If there is no proper subalgebra $\Og_{X'} \subseteq \mathscr{A} \subseteq \beta_\ast \Og_X,$ then the result follows from Proposition \ref{FactorizationI}. On the other hand, if such a subalgebra does exist, we can factor $\beta$ as 
$$X\to \boldsymbol{\mathrm{Spec}}\, \mathscr{A} \to X'.$$ Since the length of the cokernels of the induced maps on structure sheaves is strictly smaller than $\mathrm{length}\, \mathcal{Q}$ for both $ X\to \boldsymbol{\mathrm{Spec}}\, \mathscr{A}$ and $ \boldsymbol{\mathrm{Spec}}\, \mathscr{A} \to X',$ the result follows.
\end{proof}\\
We shall now apply this result to the normalisation morphism $\nu\colon\widetilde{X} \to X$ of a reduced curve $X$ over the field $\kappa.$ From Lemma \ref{snnormlemma}, we already know that we can factor $\nu$ as $\widetilde{X}\overset{\widetilde{\varsigma}}{\to} X^{\mathrm{sn}}\overset{\varsigma}{\to} X,$ where $X^{\mathrm{sn}}$ denotes the seminormalisation of $X.$ A result very similar to part (ii) of the following Theorem has previously been obtained by Laurent (\cite{Lau}, Lemmata 3.1(c) and 3.7), who uses the language of \cite{Fer}. Before we state the next Theorem, we recall that, if $\kappa$ is a field and $X$ is a scheme of finite type over $\kappa,$ then the set of singular points (i. e., the set $\{x\in X\colon \text{$\Og_{X,x}$ is not a regular local ring}\}$) is a closed subset of $X.$ This follows from the fact that $X$ is excellent (\cite{Stacks}, Tag 07QU; excellent schemes have closed singular locus by definition).
\begin{theorem} \rm (Factorisation theorem) \it
Let $\kappa$ be an arbitrary field and let $C$ be a reduced curve over $\kappa.$ Denote by $\widetilde{C}$ the normalisation of $C.$ Then the following two assertions hold:\\
(i) If $C$ is not seminormal, the morphism $\varsigma\colon C^{\mathrm{sn}}\to C$ can be written as a composition \label{FactorisationTheorem}
$$C^{\mathrm{sn}}=C_1 \overset{\varsigma_1}{\to}... \overset{\varsigma_{n-1}}{\to} C_{n}=C$$ for some $n\in \N,$ such that, for each $i=1,..., n-1,$ there is a closed point $x_{i+1}$ in $C_{i+1}$ and a closed immersion $\Spec \kappa(x_{i+1})[\epsilon]/\langle \epsilon^2 \rangle \to C_{i},$ such that the diagram
$$\begin{CD}
C_i@>{\varsigma_i}>>C_{i+1}\\
@AAA@AAA\\
\Spec \kappa(x_{i+1})[\epsilon]/\langle \epsilon^2 \rangle @>>> \Spec \kappa(x_{i+1})
\end{CD}$$
is co-Cartesian.\\
(ii) Suppose $C$ is seminormal and let $C^{\mathrm{sing}}\subseteq C$ denote the set of non-regular points of $C$ (which is closed in $C$ and finite over $\kappa$) endowed with its reduced subscheme structure. Let $D\to C$ be a finite birational morphism of reduced curves. Then the morphism $D \to C$ has reduced fibres and the diagram
$$\begin{CD}
D@>>> C^{\mathrm{sn}}\\
@AAA@AAA\\
D\times_{C} C^{\mathrm{sing}} @>>> C^{\mathrm{sing}}
\end{CD}$$
is co-Cartesian.  
\end{theorem}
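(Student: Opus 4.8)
The plan for (i) is to apply Corollary \ref{FactorizationII} to the seminormalisation morphism $\varsigma\colon C^{\mathrm{sn}}\to C$ and then to determine which prime algebras can occur. First I would verify the hypotheses of that Corollary: since $C$ is reduced and of finite type over a field it is excellent, so $\nu$ is finite, whence $\varsigma$ is finite and scheme-theoretically dominant by the Corollary following Lemma \ref{snnormlemma}; moreover $\varsigma$ is an isomorphism over the (open, cofinite) seminormal locus of $C$, and it is not globally an isomorphism because $C$ is not seminormal. Corollary \ref{FactorizationII} then yields a factorisation $C^{\mathrm{sn}}=C_1\to\dots\to C_n=C$ into push-outs along prime algebras $A_i$ over $\kappa(x_{i+1})$, and it remains to force each $A_i$ to be of type (i). Here I would use that, by Proposition \ref{seminormalisationexprop}, $\varsigma$ is a universal homeomorphism inducing isomorphisms on all residue fields, hence is injective on geometric points with trivial residue extensions; each $\varsigma_i$ is finite and scheme-theoretically dominant, so surjective, and thus each $C^{\mathrm{sn}}\to C_i$ is surjective. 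The main point of (i) is the descent of these properties to the factors: tracking a point of a fibre of $\varsigma_i$ up the tower to $C^{\mathrm{sn}}$ and down to $C$ and comparing the two equal outer residue fields shows $\varsigma_i$ preserves residue fields, while two distinct geometric points in one fibre of $\varsigma_i$ would lift to two geometric points of $C^{\mathrm{sn}}$ over a single point of $C$, contradicting injectivity of $\varsigma$. Since by Proposition \ref{Preimageprop} the fibre of $\varsigma_i$ over $x_{i+1}$ is $\Spec A_i$, the morphism $\Spec A_i\to\Spec\kappa(x_{i+1})$ is a universal homeomorphism inducing an isomorphism on residue fields, and inspecting Proposition \ref{Primeclassprop} only type (i) qualifies (types (ii), (iv) are not universally injective, and type (iii) does not preserve the residue field).

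For (ii), I would first note that both assertions are local on $C^{\mathrm{sn}}$ — reducedness of fibres obviously, and the co-Cartesian property because the push-out of Proposition \ref{Schwede} is glued from affine push-outs and commutes with restriction to opens — so I may assume $C^{\mathrm{sn}}=\Spec R$ with $R$ seminormal, reduced, excellent and one-dimensional, with finite normalisation $\widetilde C=\Spec\widetilde R$. Let $\mathfrak{c}=(R:\widetilde R)$ be the conductor; then $V(\mathfrak{c})$ is the non-normal, i.e. non-regular, locus, so $Z:=C^{\mathrm{sn},\mathrm{sing}}=\Spec R/I$ with $I=\sqrt{\mathfrak{c}}$, and $\widetilde Z:=\widetilde C\times_{C^{\mathrm{sn}}}Z=\Spec\widetilde R/I\widetilde R$. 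By Proposition \ref{Schwede} the push-out $\widetilde C\cup_{\widetilde Z}Z$ is $\Spec\bigl(\widetilde R\times_{\widetilde R/I\widetilde R}(R/I)\bigr)$, and its canonical map to $C^{\mathrm{sn}}$ is an isomorphism precisely when $R=\widetilde R\times_{\widetilde R/I\widetilde R}(R/I)$. Since the conductor square $R=\widetilde R\times_{\widetilde R/\mathfrak{c}}(R/\mathfrak{c})$ is always Cartesian ($\mathfrak{c}$ being the largest ideal of $\widetilde R$ contained in $R$), the task reduces to showing that $\mathfrak{c}=I$ in $R$ and $\mathfrak{c}=I\widetilde R$ in $\widetilde R$, i.e. that $\mathfrak{c}$ is radical in both rings.

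This reducedness is the heart of (ii); I expect it, rather than any formal manipulation, to be the main obstacle, but it follows cleanly from Definition \ref{seminormaldef}. If $b\in\widetilde R$ satisfies $b^2\in\mathfrak{c}$, then for every $w\in\widetilde R$ the element $t:=bw$ has $t^2=b^2w^2\in\mathfrak{c}\subseteq R$ and $t^3=b^2\cdot(bw^3)\in\mathfrak{c}\subseteq R$; taking $x=t^2$, $y=t^3$ gives $x^3=y^2$, so seminormality yields $a\in R$ with $a^2=t^2$ and $a^3=t^3$, which forces $a=t$ in the reduced ring $\widetilde R$ and hence $t=bw\in R$. Thus $b\widetilde R\subseteq R$, so $b\in\mathfrak{c}$, proving $\widetilde R/\mathfrak{c}$ reduced; the identical computation with $b\in R$ shows $R/\mathfrak{c}$ is reduced. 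Consequently $\mathfrak{c}=\sqrt{\mathfrak{c}}=I$ and $I\widetilde R=\mathfrak{c}$, so the conductor square becomes $R=\widetilde R\times_{\widetilde R/I\widetilde R}(R/I)$ and the square in (ii) is co-Cartesian by Proposition \ref{Schwede}. Finally, $\widetilde R/I\widetilde R=\widetilde R/\mathfrak{c}=\prod_{z\in Z}(\widetilde R\otimes_R\kappa(z))$ is reduced, so the fibres of $\widetilde C\to C^{\mathrm{sn}}$ over $Z$ are reduced; over the regular locus $\widetilde\varsigma$ is an isomorphism, so all its fibres are reduced, completing the proof.
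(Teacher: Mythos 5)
Your proof of part (i) is correct and follows the paper's own argument essentially verbatim: both apply Corollary \ref{FactorizationII} to $\varsigma$ and then rule out the prime algebras of types (ii)--(iv) from Proposition \ref{Primeclassprop} using that $\varsigma$ is bijective and induces isomorphisms on residue fields, properties which descend to each $\varsigma_i$; your point-tracking argument for that descent is just a more explicit version of what the paper asserts in one line. For part (ii), however, you take a genuinely different route. The paper again invokes the factorisation of $\widetilde{\varsigma}$ from Corollary \ref{FactorizationII}: it first shows that no dual numbers occur among the $A_i$ (choosing $r$ maximal with $A_r \cong \kappa(x_{r+1})[\epsilon]/\langle\epsilon^2\rangle$ and producing a function $f$ whose square and cube descend all the way to $C$ while $f$ itself does not, contradicting seminormality), so that every $A_i$ is reduced, and then composes the resulting co-Cartesian squares, suitably enlarged to squares over the full pre-image of $C^{\mathrm{sn},\mathrm{sing}}$, into the single square of the statement. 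You bypass this tower entirely: you identify the square of (ii) with the conductor square $R = \widetilde{R} \times_{\widetilde{R}/\mathfrak{c}} (R/\mathfrak{c})$, which is Cartesian for purely formal reasons, and reduce everything to the single claim that $\mathfrak{c}$ is radical in $\widetilde{R}$ (hence in $R$), which you extract from Definition \ref{seminormaldef} via the substitution $x = t^2$, $y = t^3$ with $t = bw$; this is in essence the classical characterisation of seminormal one-dimensional rings by a radical conductor (going back to Traverso), and your computation $(a-t)^3 = 0$ is the same one the paper uses when proving that $S^{\mathrm{sn}}\to S$ is an isomorphism for seminormal $S$. Your route buys a shorter, more self-contained argument in which the co-Cartesian property and the reducedness of the fibres fall out of one computation, together with the concrete identification $C^{\mathrm{sn},\mathrm{sing}} = V(\mathfrak{c})$; the paper's route stays uniformly inside the push-out factorisation machinery it has just built, which is the same tool it reuses for part (i) and for the later Picard-functor arguments, at the cost of a more delicate induction. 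Both arguments ultimately rest on the same use of the defining property of seminormality, and I see no gap in yours.
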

\begin{proof}
(i) Let $C^{\mathrm{sn}}=C_1 \overset{\varsigma_1}{\to}... \overset{\varsigma_{n-1}}{\to} C_{n}=C$ be the factorisation of $\varsigma$ from Corollary \ref{FactorizationII} (which applies because of Corollary \ref{schdomcor}). In the terminology of Corollary \ref{FactorizationII}, we must show that, for all $i=1,..., n-1,$ the prime algebra $A_i$ over $\kappa(x_{i+1})$ is isomorphic to $\kappa(x_{i+1})[\epsilon]/\langle \epsilon^2 \rangle.$ By construction, each of the morphisms $\varsigma_j$ is surjective. Because their composition is injective, we see that all $\varsigma_j$ are, in fact, bijective. In particular, there is no $i=1,... n-1$ with the property that $A_i\cong \kappa(x_{i+1})\times \kappa(x_{i+1}).$ Moreover, each morphism $\varsigma_j$ induces isomorphisms on all residue fields (which follows from the fact that this is true for $\varsigma$), so there can be no $i=1,..., n-1$ such that $A_i$ is a proper field extension of $\kappa(x_{i+1}).$ Hence the claim follows from Corollary \ref{FactorizationII} and Proposition \ref{Primeclassprop}. \\
(ii) We begin by showing that the morphism $D\to C$ has reduced fibres. Let $x\in C$ be a closed point. Since the claim is local in the Zariski topology on $C,$ we may assume that $C$ (and hence also $D$) are affine. Shrinking $C$ further if necessary, we may assume that $C$ is regular away from $x,$ so that $D\to C$ is an isomorphism away from $x.$ Let $f\in \Gamma(D, \Og_D)$ be an element such that $f\otimes 1$ is nilpotent in $\Gamma(D, \Og_D)\otimes_{\Gamma(C, \Og_C)} \kappa(x)$ (note that every element in this tensor product is an elementary tensor since $\Gamma(C, \Og_C)\to \kappa(x)$ is surjective). We obtain ring extensions
$$\Gamma(C, \Og_C)\subseteq \Gamma(C, \Og_C)[f] \subseteq \Gamma(D, \Og_D).$$ The morphisms $D \to \Spec \Gamma(C, \Og_C)[f]$ and $\Spec \Gamma(C, \Og_C)[f] \to C$ are clearly integral and hence surjective (\cite{Stacks}, Tag 00GQ). In particular, the morphism
$$ D\times_C \Spec \kappa(x) \to (\Spec \Gamma(C, \Og_C)[f]) \times_{C}\Spec \kappa(x)$$ is surjective, so $f\otimes 1$ is nilpotent as an element of $\Gamma(C, \Og_C)[f] \otimes_{\Gamma(C, \Og_C)}\kappa(x).$ Since this element generates $\Gamma(C, \Og_C)[f] \otimes_{\Gamma(C, \Og_C)}\kappa(x)$ over $\kappa(x),$ it follows that the morphism $\Spec \Gamma(C, \Og_C)[f] \to C$ is injective.
Suppose $y$ is the point of $\Spec \Gamma(C, \Og_C)[f]$ which is mapped to $x.$ By construction, $\kappa(y)$ is generated by the image of $f$ over $\kappa(x).$ However, since $f\otimes 1 \in \Gamma(C, \Og_C)[f]\otimes_{\Gamma(C, \Og_C)} \kappa(x)$ is nilpotent, the image of $f$ in $\kappa(y)$ vanishes and the map $\kappa(x)\to \kappa(y)$ is an isomorphism. Now \cite{Stacks}, Tags 04DF and 01S4 show that the morphism $\Spec \Gamma(C, \Og_C)[f] \to C$ is a universal homeomorphism. Because $C$ is seminormal, this morphism is, in fact, an isomorphism by Proposition \ref{seminormalisationexprop}, so $f\otimes1 \in \Gamma(D, \Og_D)\otimes_{\Gamma(C, \Og_C)} \kappa(x)$ vanishes.\\
To see that the diagram in (ii) is co-Cartesian, let $Z:=D\times_CC^{\mathrm{sing}}$ and consider the canonical map 
$$D\cup_Z C^{\mathrm{sing}} \to C.$$ This map is clearly surjective, and the topological description of the push-out (Proposition \ref{Schwede}) shows that it is injective. Moreover, because the morphism $D\backslash Z \to D\cup_Z C^{\mathrm{sing}}\backslash C^{\mathrm{sing}}$ is an isomorphism and the map $C^{\mathrm{sing}} \to D\cup_Z C^{\mathrm{sing}}$ is a closed immersion (Proposition \ref{Schwede} again), the morphism $D\cup_Z C^{\mathrm{sing}} \to C$ induces isomorphisms on all residue fields. Finally, tis canonical map is finite (Proposition \ref{Preimageprop}), so we can argue exactly as in the preceding step to show that the canonical morphism is an isomorphism.  
\end{proof}
\begin{lemma}
The factorisation of $\nu=\varsigma\circ \widetilde{\varsigma}$ given above Theorem \ref{FactorisationTheorem} commutes with (not necessarily finite) separable algebraic extensions of $\kappa.$ \label{snbasechangelem}
\end{lemma}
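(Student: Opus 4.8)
The plan is to prove separately that forming the normalisation $\widetilde{C}$ and forming the seminormalisation $C^{\mathrm{sn}}$ each commute with base change along a separable algebraic extension $\kappa'/\kappa$; because both constructions are characterised by universal properties (Proposition~\ref{seminormalisationexprop} for the seminormalisation, and the usual one for the normalisation), the morphisms $\widetilde{\varsigma}$ and $\varsigma$, and hence the whole factorisation $\nu=\varsigma\circ\widetilde{\varsigma}$, will then be identified with the base changes of the original ones by functoriality. All claims may be checked affine-locally by Lemma~\ref{loclemma}. I would use freely that a separable field extension preserves reducedness, so that $C_{\kappa'}$ is again a reduced curve.

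The normalisation is the easy half: a separable extension also preserves normality, so $\widetilde{C}\times_\kappa\kappa'$ is normal, and it is finite and birational over $C_{\kappa'}$ (the total ring of fractions being computed componentwise and commuting with the flat base change $-\otimes_\kappa\kappa'$); hence $(\widetilde{C})_{\kappa'}$ is the normalisation of $C_{\kappa'}$.

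For the seminormalisation I would apply the characterisation of $(\,\cdot\,)^{\mathrm{sn}}$ already used in the proof of Lemma~\ref{loclemma} (\cite{Stacks}, Tag~0EUS~(4)): a morphism is the seminormalisation exactly when its source is seminormal and the morphism is a universal homeomorphism inducing isomorphisms on all residue fields. The morphism $\varsigma_{\kappa'}\colon(C^{\mathrm{sn}})_{\kappa'}\to C_{\kappa'}$ is a universal homeomorphism, this class being stable under base change, and it induces isomorphisms on residue fields: a point $y'\in(C^{\mathrm{sn}})_{\kappa'}$ and its image $x'=\varsigma_{\kappa'}(y')$ lie over points $\bar y\in C^{\mathrm{sn}}$ and $\bar x=\varsigma(\bar y)\in C$ for which $\varsigma$ induces an isomorphism $\kappa(\bar x)\xrightarrow{\sim}\kappa(\bar y)$, so $x'$ and $y'$ correspond to the same prime of one and the same algebra $\kappa(\bar x)\otimes_\kappa\kappa'\cong\kappa(\bar y)\otimes_\kappa\kappa'$, and therefore $\kappa(x')\cong\kappa(y')$. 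It thus remains only to show that $(C^{\mathrm{sn}})_{\kappa'}$ is seminormal.

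This is the crux of the argument. For a \emph{finite} separable extension the ring map $A\to A\otimes_\kappa\kappa'$ from a seminormal affine chart $A$ of $C^{\mathrm{sn}}$ is finite étale, and seminormality is preserved under étale base change, being an étale-local property. The case of an arbitrary separable algebraic $\kappa'$ then follows by writing $\kappa'=\varinjlim_i\kappa_i$ as the union of its finite separable subextensions: one has $A\otimes_\kappa\kappa'=\varinjlim_i(A\otimes_\kappa\kappa_i)$, a filtered colimit of seminormal rings along injective transition maps, which is again seminormal because any relation $x^3=y^2$ and its unique solution (the rings being reduced) already live at a finite stage. I expect this preservation of seminormality under separable base change to be the only real obstacle. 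If one wishes to avoid importing it, one can instead try to re-derive it from the Factorisation Theorem~\ref{FactorisationTheorem}(i), which presents $\varsigma$ as an iterated push-out along the closed immersions $\Spec\kappa(x)[\epsilon]/\langle\epsilon^2\rangle\to\Spec\kappa(x)$: since $\kappa(x)\otimes_\kappa\kappa'$ is a finite product of separable field extensions and $-\otimes_\kappa\kappa'$ is exact, the base change of each such diagram is a disjoint union of diagrams of the same shape, namely exactly those prescribed by Theorem~\ref{FactorisationTheorem}(i) for $C_{\kappa'}$; this would identify $(C^{\mathrm{sn}})_{\kappa'}$ with $(C_{\kappa'})^{\mathrm{sn}}$, provided one first checks that the flatness and projectivity hypotheses of Proposition~\ref{Basechangeprop} are genuinely satisfied in this setting.
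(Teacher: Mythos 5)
Your main line of argument is correct, but it takes a genuinely different route from the paper. The paper observes that, since $\nu\colon\widetilde{C}\to C$ is finite, $C^{\mathrm{sn}}$ coincides with the seminormalisation of $C$ \emph{in} $\widetilde{C}$ in the sense of \cite{Kol}, Definition 7.2.1, and then simply cites \cite{Kol}, Proposition 7.2.6 for compatibility with separable field extensions; compatibility for $\widetilde{C}$ and for $C^{\mathrm{sn},\mathrm{sing}}$ is noted, and the push-out/base-change machinery (Proposition~\ref{Basechangeprop}) finishes the argument. You instead verify the universal characterisation directly: $(\widetilde{C})_{\kappa'}$ is the normalisation of $C_{\kappa'}$ (standard), $\varsigma_{\kappa'}$ is a universal homeomorphism inducing isomorphisms on residue fields (your fibre argument is correct), and then everything reduces, via \cite{Stacks}, Tag 0EUS (4), to the seminormality of $(C^{\mathrm{sn}})_{\kappa'}$. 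Your reduction of that last point to the finite separable case by a filtered-colimit argument is sound: uniqueness of the solution of $x^3=y^2$ is automatic in a reduced ring, so both the relation and its solution descend to a finite stage. What your route buys is independence from Koll\'ar's relative seminormalisation; what it costs is that the crux is shifted onto the assertion that seminormality ascends along (finite) \'etale maps.

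Be aware that this assertion is precisely the non-trivial input, not a formal consequence of seminormality being ``\'etale-local'': the descent half follows from faithfully flat descent, but the ascent half is a theorem (due to Greco--Traverso, \emph{Seminormal rings and weakly normal varieties}, Compositio Math.\ 40 (1980)), and it plays in your proof exactly the role that the citation of \cite{Kol}, Proposition 7.2.6 plays in the paper's; you must either cite it or prove it. Note also that your proposed fallback via Theorem~\ref{FactorisationTheorem}(i) is circular as sketched: base changing the tower does exhibit $(C^{\mathrm{sn}})_{\kappa'}\to C_{\kappa'}$ as a composite of push-outs along $\Spec L[\epsilon]/\langle\epsilon^2\rangle\to\Spec L$, but part (i) is not a characterisation of the seminormalisation --- to conclude $(C^{\mathrm{sn}})_{\kappa'}\cong(C_{\kappa'})^{\mathrm{sn}}$ from the universal property you still need to know that $(C^{\mathrm{sn}})_{\kappa'}$ is seminormal, which is what was to be proved. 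If you want an argument internal to the paper's toolkit, base change the co-Cartesian square of Theorem~\ref{FactorisationTheorem}(ii) instead: $(C^{\mathrm{sn}})_{\kappa'}$ is then the push-out of the normal curve $(\widetilde{C})_{\kappa'}$ along reduced Artinian schemes (reducedness of the fibre is preserved because $\kappa'/\kappa$ is separable), and such a push-out is seminormal by a direct check --- solve $x^3=y^2$ in the normal curve, and observe that on a product of fields the solution $a$ lies in the given subring because $a=a^3/a^2$ wherever $a\neq 0$.
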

\begin{proof}
Because the morphism $\widetilde{C}\to C$ is finite, we can easily check that $C^{\mathrm{sn}}$ is the \it semi normalisation of \rm  $C$ \it in \rm $\widetilde{C}$ in the sense of \cite{Kol}, Definition 7.2.1. In particular, the formation of $C^{\mathrm{sn}}$ commutes with separable field extensions by \cite{Kol}, Proposition 7.2.6. Moreover, the formation of $\widetilde{C}$ commutes with separable algebraic extensions of $\kappa,$ which implies the Lemma.
\end{proof}
\subsection{The two Picard functors}\label{twopicsubsec}
Let $f\colon X\to Y$ be a morphism of schemes. As usual, a \it Picard functor \rm will be the sheafification of the presheaf
$$T\mapsto \Pic (T\times_YX),$$
with respect to a suitable Grothendieck topology on the category of schemes over $Y.$ The only topologies we shall use are the étale- and fppf-topologies. Following \cite{Kle}, Definition 9.2.2, we introduce two different Picard functors: 
\begin{definition}
Let $f\colon X\to Y$ be as above. We let $\Pic_{X/Y, \et}$ and $\Pic_{X/Y, \fppf}$ be the sheafification of the functor $T\mapsto  \Pic (T\times_YX)$ in the étale and the fppf-topology, respectively. If $\Pic_{X/Y, \fppf}$ is representable by an algebraic space, we shall refer to the $Y$-algebraic space representing it as $\Pic_{X/Y}.$ 
\end{definition}
We shall see later that we must work with $\Pic_{X/Y, \et}$ in an essential way, whereas most representability results for Picard functors are only available for $\Pic_{X/Y, \fppf}.$ This explains why we must introduce, and work with, both functors. We shall only ever consider very special morphisms $f\colon X\to Y.$ More precisely, we shall only apply Picard functors to relative curves $f\colon \mathscr{C}\to S,$ where $S$ is a Dedekind scheme. There is one general situation where the two Picard functors are isomorphic, and we shall make much use of this fact. First recall that a morphism $f\colon X\to Y$ is said to be \it cohomologically flat in dimension zero \rm if, for all morphisms $\phi\colon T\to Y,$ the canonical map
$$\phi^\ast f_\ast \Og_{X}\to ( \Id_T\times f)_\ast \Og_{T\times_YX}$$ is an isomorphism (there are different definitions in the literature; this is the one used in \cite{Art}, \cite{BLR}, \cite{Kle}, \cite{Liu}, and \cite{Ov}). Also recall that there is a canonical morphism 
$$\Pic_{X/Y, \et}\to \Pic_{X/Y, \fppf},$$ which comes from the fact that the fppf-topology is finer than the étale topology. The following result is known (\cite{Ray}, p. 28, (1.2) or \cite{BLR}, p. 203). However, since its proof is only sketched briefly in both of those sources, we give a complete proof here.
\begin{proposition}
Let $f\colon X\to Y$ be a proper morphism of schemes, with $Y$ not necessarily locally Noetherian. Assume moreover that $f$ is finitely presented and flat. Then the canonical map $\Pic_{X/Y, \et}\to \Pic_{X/Y, \fppf}$ is an isomorphism. \label{Picisomprop}
\end{proposition}
\begin{proof}
Everything in this proof must be shown after base change $T\to Y.$ To simplify the notation, we shall assume that $Y=T.$ By Grothendieck's theorem comparing étale and fppf-cohomology of smooth group schemes (\cite{Dix}, Théorème 11.7), the maps $H^ i _{\et}(X, \mathbf{G}_{\mathrm{m}, X}) \to H^ i _{\fppf}(X, \mathbf{G}_{\mathrm{m}, X})$ are isomorphisms for all $i\geq 0.$ We shall now show that the same is true for the morphisms
\begin{align} H^ i_{\et}(Y,  f_\ast \mathbf{G}_{\mathrm{m}, X}) \to H^ i_{\fppf}(Y,  f_\ast \mathbf{G}_{\mathrm{m}, X}).\tag{$\ast$}\label{stilltrueX}\end{align} This will occupy most of this proof and we proceed in several steps:\\
\it Step 1: \rm By \cite{BLR}, Chapter 8.1, Corollary 8 together with \cite{Stacks}, Tag 00DO, the sheaf $f_\ast \Og_X$ (considered as a sheaf on the big fppf-site of $Y$) is representable by an affine ring scheme $V$ of finite presentation over $Y,$ so $f_\ast \mathbf{G}_{\mathrm{m}, X}$ is a group scheme of finite presentation over $Y$ (\cite{BLR}, Chapter 8.1, Lemma 10). Hence\footnote{The step described in the preceding sentence is necessary because, in \cite{Dix}, Grothendieck uses a restricted fppf-site, whose objects are schemes \it locally of finite presentation \rm over the base scheme. See \cite{Dix}, p. 124.}, by \cite{Dix}, Lemme 11.1, we may assume that $Y=\Spec R_0$ for some strictly local (i. e., local and strictly Henselian) ring $R_0$ and must then show that $H^ i _{\fppf}(Y, f_\ast \mathbf{G}_{\mathrm{m},X})=0$ for all $i>0.$ This will be achieved in Step 3 below; we need one additional auxiliary fact:\\
\it Step 2: \rm Because $f$ is proper, the morphism $\tau\colon Y':= \boldsymbol{\Spec} \, f_\ast \Og_X \to Y$ is integral (\cite{Stacks}, Tag 03H2; it need not be finite since $R_0$ need not be Noetherian). We claim that the ring $R':=\Gamma(Y, f_\ast \Og_X)$ is a finite product of strictly local rings. Let $R$ be the image of the map $R_0\to R'$ and let $X_1,..., X_m$ be the connected components of the special fibre of $f\colon X\to Y.$ Then $R$ is strictly Henselian local (this is clear from the definitions). Moreover, let $R \subseteq F \subseteq R'$ be a finite $R$-subalgebra. Since the induced map $X\to \Spec F$ is surjective (\cite{Stacks}, Tags 03GY and 00GQ), we can write $F=\prod_{\sigma\in \Sigma_F}F_{\sigma}$ for a finite set $\Sigma_F$ such that there is a surjective map $\varpi_F\colon \{1,..., m\} \to \Sigma_F$, $F_\sigma$ is strictly local for all $\sigma\in \Sigma_F,$ and such that $X_j$ maps to the special point of $F_{\varpi_F(j)}$ for all $j=1,...,m$ (\cite{Stacks}, Tag 04GH (1)). Now choose a finite $R$-subalgebra $F_0$ of $R'$ such that $\Sigma_{F_0}$ has maximal cardinality among all such finite subalgebras. If $F_0\subseteq F$ is another finite $R$-subalgebra of $R',$ we obtain a unique surjective map $\Sigma_{F}\to \Sigma_{F_0}$ which commutes with the maps $\varpi_{F_0}$ and $\varpi_F.$ By our choice of $F_0,$ this map must be bijective, so we can identify $\Sigma_F$ and $\Sigma_{F_0}.$ Since $R'$ is the colimit of its finite $R$-subalgebras (\cite{Stacks}, Tag 02JJ), we obtain a decomposition
$$R'=\prod_{\sigma\in \Sigma_{F_0}}\varinjlim_{F_0\subseteq F} F_{\sigma}$$ (\cite{Stacks}, Tag 002W), where $F$ runs through the finite $R$-subalgebras of $R'$ containing $F_0.$ Each direct factor in this decomposition is the colimit of strictly local rings along finite local transition maps, and therefore visibly strictly local.\\
\it Step 3: \rm Let $f'\colon X\to Y'$ be the morphism induced by $f\colon X\to Y.$ Moreover, let $\mathcal{B}$ be the set of all schemes $U\to Y$ finite and locally free over $Y$ and let $\mathrm{Cov}$ be the set of all fppf-covers $\{U_\mu\to U\}_{\mu\in I}$ such that $U\in \mathcal{B}$ and such that all maps $U_\mu\to U$ are finite and locally free. By \cite{Stacks}, Tag 05WN, every fppf-covering of an element $U$ of $\mathcal{B}$ can be refined by an fppf-covering which consists only of quasi-finite affine morphisms, and since $U$ is the disjoint union of finitely many strictly local schemes (\cite{Stacks}, Tag 04GH (1)), the covering can be further refined by an element of $\mathrm{Cov}$ (\cite{Stacks}, Tag 04GG (13)). We shall now apply Cartan's criterion (\cite{Stacks}, Tag 03F9). Condition (1) in \it loc. cit. \rm is clearly verified, and we have just shown that so is condition (2). Hence we must now show that, for all $i> 0$ and all $\mathcal{U} \in \mathrm{Cov},$ we have
$\widecheck{H}^ i (\mathcal{U}, f_{\ast} \mathbf{G}_{\mathrm{m}, X}) = 0.$ Pick an element $\mathcal{U} = \{U_\mu\to U\}_{\mu\in I} \in \mathrm{Cov}$ and let $\mathcal{U}'$ be the fppf-cover $\{ U_\mu\times_YY' \to U\times_YY'\}_{\mu\in I}.$ Then, for any $i>0,$
\begin{align*}
\widecheck{H}^ i (\mathcal{U}, f_{\ast} \mathbf{G}_{\mathrm{m}, X}) &= \widecheck{H}^ i (\mathcal{U}', f'_\ast \mathbf{G}_{\mathrm{m}, X}) = \widecheck{H}^ i (\mathcal{U}',\mathbf{G}_{\mathrm{m}, Y'});
\end{align*}
the first equality is true by construction, and the second because taking the pushforward of the structure sheaf always commutes with \it flat \rm base change (\cite{Stacks}, Tag 02KH).
Since, for all $p\in \N_0$ and $\mu_0,..., \mu_p\in I,$ the scheme $U_{\mu_0}\times_U ... \times_U U_{\mu_p}\times_YY'$ is the finite disjoint union of strictly local schemes (\cite{Stacks}, Tag 04GH (1)), it follows from \cite{Dix}, Théorème 11.7 that $$H_{\fppf}^ i (U_{\mu_0}\times_U ... \times_U U_{\mu_p}\times_YY', \mathbf{G}_{\mathrm{m}, U_{\mu_0}\times_U ... \times_U U_{\mu_p}\times_YY'})=0$$ for all $i>0.$ However, \cite{Stacks}, Tags 03F7 and 04GH (1) together with \cite{Dix}, Théorème 11.7 now show that 
\begin{align*}
\widecheck{H}^ i (\mathcal{U}', \mathbf{G}_{\mathrm{m}, Y'}) &= H_{\fppf}^ i ( U\times_YY', \mathbf{G}_{\mathrm{m}, U\times_YY'}) \\
&=0.
\end{align*}
In particular, we may apply \cite{Stacks}, Tag 03F9 and conclude that $H^ i_{\fppf}(Y, f_\ast\mathbf{G}_{\mathrm{m}, X})=0$ for all $i>0.$ Hence the morphisms (\ref{stilltrueX}) are indeed isomorphisms and the Proposition follows from a standard argument involving the Leray spectral sequence and the lemma of five homomorphisms; see \cite{BLR}, p. 203, \cite{Kle}, p. 257, or \cite{Ov}, p. 6462. 
\end{proof}\\
The following general result is due to M. Artin \cite{Art}:
\begin{theorem} Let $Y$ be a locally Noetherian scheme. Let $f\colon X\to Y$ be a proper and flat morphism which is cohomologically flat in dimension zero. Then $\Pic_{X/Y, \fppf}$ is representable by an algebraic space\footnote{Recall that all algebraic spaces in this article will be quasi-separated and locally separated, i.e., the diagonal will be a quasi-compact immersion.} locally of finite presentation over $Y.$ \label{representthm}
\end{theorem}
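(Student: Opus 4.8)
The plan is to verify the hypotheses of Artin's representability criterion for algebraic spaces (\cite{Art}; see also the exposition in \cite{Kle}, Chapter 9, and \cite{Stacks}, Tag 07SZ). Set $P := \Pic_{X/Y, \fppf}$. Since $P$ is by definition the fppf-sheafification of the functor $T\mapsto \Pic(T\times_YX)$, it is an fppf sheaf, so the sheaf axiom holds for free. The first substantive point is that $P$ is limit-preserving (locally of finite presentation as a functor): because $Y$ is locally Noetherian and $f$ is proper, $f$ is of finite presentation, and the formation of the coherent cohomology of a proper morphism commutes with filtered colimits of rings; a standard argument then shows that $P$ commutes with filtered colimits of $\Og_Y$-algebras.

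Next I would set up the deformation and obstruction theory, which is the input Artin's machine digests. For a square-zero extension $T_0\hookrightarrow T$ of affine $Y$-schemes with ideal $I$ and a line bundle $\mathscr{L}_0$ on $X_{T_0}$, the exact sequence of abelian sheaves $0\to I\otimes\Og_{X_{T_0}}\to \Og_{X_T}^\times\to \Og_{X_{T_0}}^\times\to 1$ shows, upon taking cohomology, that the set of lifts of $\mathscr{L}_0$ to $X_T$ is either empty or a torsor under $H^1(X_{T_0}, \Og_{X_{T_0}})\otimes I$, with obstruction class in $H^2(X_{T_0}, \Og_{X_{T_0}})\otimes I$; these modules are coherent and compatible with base change because $f$ is proper and flat, which is exactly what Artin's criterion requires of the tangent-obstruction theory. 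The Rim--Schlessinger gluing condition then follows formally from the fact that line bundles and their cohomology glue along fibre products of thickenings. Here cohomological flatness in dimension zero enters to control the $H^0$-part: it guarantees that the automorphisms of a line bundle, governed by the sheaf $(f_\ast\Og_X)^\times$, are represented by the smooth group $\Res_{Y'/Y}\Gm$ appearing in the proof of Proposition \ref{Picisomprop}, so that the relevant automorphism functors are well-behaved after base change.

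The decisive step is the effectivity of formal deformations: given a complete Noetherian local $\Og_Y$-algebra $R$ with maximal ideal $\mathfrak{m}$ and a compatible system of line bundles on the thickenings $X\times_Y\Spec(R/\mathfrak{m}^{n+1})$, Grothendieck's formal existence theorem (formal GAGA, available precisely because $f$ is proper) algebraises this system to a line bundle on $X\times_Y\Spec R$. Granting this, openness of versality is extracted from Artin's approximation theorem together with the coherence of the obstruction theory established above, and Artin's criterion then yields that $P$ is representable by an algebraic space locally of finite presentation over $Y$. I expect the main obstacle to be precisely this last pair of conditions---effectivity and openness of versality---since effectivity rests on formal GAGA while openness of versality requires verifying that the entire tangent-obstruction formalism is a coherent, completion-compatible functor in Artin's sense; this is the technical heart of the argument, and it is where properness, flatness, and the local Noetherian hypothesis on $Y$ all enter in an essential way.
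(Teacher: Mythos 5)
Your proposal is correct and takes essentially the same route as the paper: the paper's proof consists of citing Artin's Theorem 7.3 in \cite{Art} (with the small correction in the Appendix to \cite{ArtII}), noting only that the locally Noetherian hypothesis makes $f$ of finite presentation, and your sketch---Artin's representability criterion, the square-zero deformation theory with obstructions in $H^2$, cohomological flatness controlling the $H^0$/automorphism part, effectivity via Grothendieck's existence theorem, and openness of versality via approximation---is precisely an outline of how that cited theorem is proved. In other words, you have re-derived in sketch form the content that the paper delegates to the reference, rather than pursuing a genuinely different argument.
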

\begin{proof}
The Noetherian assumption on $Y$ guarantees that $f$ is of finite presentation. A proof of the Theorem above is presented in \cite{Art}, Theorem 7.3, with a small correction given in the Appendix to \cite{ArtII}. 
\end{proof}\\
The reader should bear in mind that $\Pic_{X/Y}$ need not be smooth over $Y,$ even if $Y$ is the spectrum of an algebraically closed field. However, we have the following
\begin{proposition}
Let $f\colon X\to Y$ be as in the preceding Theorem. Suppose that, for all $y\in Y,$ we have $H^2(X_y, \Og_{X_y})=0,$ where $X_y:=X\times_Y\Spec \kappa(y).$ Then $\Pic_{X/Y}$ is smooth over $Y.$ \label{picsmoothprop}
\end{proposition}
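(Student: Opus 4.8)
The plan is to establish smoothness via the infinitesimal lifting criterion, and to house the resulting obstructions in the groups $H^2(X_y,\Og_{X_y})$ whose vanishing is hypothesised. By Theorem \ref{representthm} the algebraic space $P:=\Pic_{X/Y}$ is locally of finite presentation over $Y$, so it is smooth if and only if it is formally smooth; thus it suffices to show that for every square-zero extension $0\to I\to A'\to A\to 0$ of $\Og_Y$-algebras the map $P(A')\to P(A)$ is surjective. A standard limit argument (using that $P$ is locally of finite presentation) reduces us to the case of a small extension of Artinian local $\Og_Y$-algebras, so that $I$ is a one-dimensional vector space over the common residue field $\kappa$. Writing $y\in Y$ for the image of the closed point and $X_\kappa:=X\times_Y\Spec\kappa$ for the closed fibre, the flatness of $f$ together with $I^2=0$ yields a canonical identification $\Og_{X_{A'}}\otimes_{A'}I=\Og_{X_\kappa}\otimes_\kappa I$.

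The first step is to record the cohomological input. Since $\Spec\kappa\to\Spec\kappa(y)$ is flat and $X_\kappa=X_y\times_{\Spec\kappa(y)}\Spec\kappa$, flat base change for quasi-coherent cohomology gives $H^2(X_\kappa,\Og_{X_\kappa})=H^2(X_y,\Og_{X_y})\otimes_{\kappa(y)}\kappa=0$ by assumption, and hence also $H^2(X_\kappa,\Og_{X_\kappa}\otimes_\kappa I)=0$. This is precisely the group into which the obstruction to the desired lift will map. Next I would set up the deformation theory: the schemes $X_A$ and $X_{A'}$ are nilpotent thickenings of $X_\kappa$ with the same underlying space, and the truncated exponential sequence
$$0\to \Og_{X_\kappa}\otimes_\kappa I\to \Og_{X_{A'}}^{\ast}\to \Og_{X_A}^{\ast}\to 1$$
(valid because $I^2=0$ and $f$ is flat) identifies, through its long exact cohomology sequence, the obstruction to lifting a line bundle on $X_A$ to $X_{A'}$ with the image of its class under the connecting homomorphism into $H^2(X_\kappa,\Og_{X_\kappa}\otimes_\kappa I)$. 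By the preceding vanishing, every line bundle on $X_A$ therefore lifts to $X_{A'}$.

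The main obstacle is that a class $\xi\in P(A)$ need not be represented by an honest line bundle on $X_A$, since $P$ is a sheafification, so the naive exponential-sequence argument must be upgraded to the level of the sheaf $P$. I would resolve this by running the exponential sequence for the fppf push-forward $R^{\bullet}f_{\fppf,\ast}$ rather than for a fixed line bundle: because $\Og_{X_\kappa}\otimes_\kappa I$ is quasi-coherent, its fppf and Zariski cohomology coincide (Grothendieck's comparison, already invoked in Proposition \ref{Picisomprop}), so the obstruction to lifting $\xi$ is governed by the fppf sheaf $R^2f_{\fppf,\ast}(\Og_X\otimes I)$, whose fibre over $y$ is computed by $H^2(X_\kappa,\Og_{X_\kappa}\otimes_\kappa I)=0$. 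Equivalently, and more concretely, one may pass to an fppf cover of $\Spec A$ over which $\xi$ is represented by a genuine line bundle; the computation above is functorial and the relevant $H^2$ continues to vanish after base change along this cover (its fibres remain field base changes of $X_y$), so the local lifts exist, and the obstruction to descending them to a lift of $\xi$ again lands in a vanishing $H^2$. This is the infinitesimal study of the Picard functor carried out in the proof of \cite{BLR}, Chapter 8.4, Theorem 1. Either way the obstruction vanishes, $P(A')\to P(A)$ is surjective, and $P$ is smooth over $Y$.
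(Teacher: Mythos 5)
Your second paragraph --- the truncated exponential sequence and the obstruction in $H^2(X_\kappa,\Og_{X_\kappa}\otimes_\kappa I)=0$ for lifting an \emph{honest} line bundle --- is correct, and it is exactly the content of the black box the paper invokes: the paper's proof is simply the identification $\Pic_{X/Y,\et}\cong\Pic_{X/Y,\fppf}$ (Proposition \ref{Picisomprop}) followed by a citation of \cite{Kle}, Proposition 9.5.19. The genuine gap sits at the difficulty you yourself single out, namely that a class $\xi\in P(A)$ need not come from a line bundle on $X_A$, and neither of your two fixes closes it. In the sheaf-theoretic fix, even granting an exact sequence of sheaves $\R^1f_{A'\ast}\Gm\to\R^1f_{A\ast}\Gm\to\R^2f_{A'\ast}(\Og_{X_\kappa}\otimes I)$ with vanishing third term, surjectivity of a \emph{map of sheaves} does not give surjectivity of $P(A')\to P(A)$ on sections over the test ring: the cokernel on sections is controlled by $H^1_{\et}(\Spec A',\mathscr{K})$, where $\mathscr{K}$ is the kernel sheaf, and you say nothing about this group. (A smaller point: on the big fppf site the kernel of $\Gm_{X_{A'}}\to\Gm_{X_A}$ is $T\mapsto 1+I\Og_T(T)$, which agrees with the quasi-coherent sheaf $\Og_{X_\kappa}\otimes_\kappa I$ only on test schemes flat over $A'$; the sequence must be run on the small \'etale site, which is invariant under nilpotent thickenings.) The covering fix is worse: the obstruction to descending the local lifts does \emph{not} lie in $H^2(X_\kappa,\Og_{X_\kappa}\otimes I)$ but in \v{C}ech cohomology of the cover of $\Spec A'$, i.e.\ in Galois/Brauer-type groups of the Artinian ring; indeed, by the Leray spectral sequence the quotient of $P(A)$ by the image of $\Pic(X_A)$ injects into $H^2_{\et}(\Spec A,f_{A\ast}\Gm)$, a Brauer-type group which is in general nonzero when the residue field $\kappa$ is not separably closed --- precisely the case your reduction leaves open. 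A symptom that something is missing: your lifting argument never uses cohomological flatness in dimension zero (a standing hypothesis via Theorem \ref{representthm}), yet for proper flat relative \emph{curves}, where $H^2$ of every fibre vanishes automatically, formal smoothness of $\Pic_{X/Y,\fppf}$ essentially forces cohomological flatness (cf.\ \cite{Ray} and \cite{BLR}, Chapter 8.4); an argument in which that hypothesis plays no role cannot be complete.

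There are two standard ways to close the gap. The first is the route of the result you are reconstructing: reduce not merely to Artinian local test rings but to Artinian local test rings with \emph{separably closed} residue field (legitimate because smoothness of the algebraic space $P$, being fpqc-local on $Y$, may be tested after base change to a strict Henselisation, and then with Artinian test algebras having that residue field). Over such a strictly Henselian $A$ all higher \'etale cohomology of $\Spec A$ vanishes, so the Leray spectral sequence together with Proposition \ref{Picisomprop} gives $P(A)=\Pic(X_A)$, likewise for $A'$, and then your second paragraph literally finishes the proof. The second way is to complete your own sheaf argument on the small \'etale site: cohomological flatness implies that $f_{A'\ast}\Gm\to f_{A\ast}\Gm$ is surjective (a function lifting a unit through a nilpotent thickening is a unit), hence the connecting map $f_{A\ast}\Gm\to\R^1f_{A'\ast}(\Og_{X_\kappa}\otimes I)$ vanishes and $\mathscr{K}$ equals $\R^1f_{A'\ast}(\Og_{X_\kappa}\otimes I)$; the latter is the \'etale sheaf attached to a quasi-coherent module on the affine scheme $\Spec A'$, so $H^1_{\et}(\Spec A',\mathscr{K})=0$ and surjectivity on sections follows. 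Finally, a minor repair: your ``standard limit argument'' only reduces the lifting problem to Noetherian test rings; the further reduction to small extensions of Artinian local rings is a separate (standard) step, valid because $P$ is an algebraic space locally of finite presentation over the locally Noetherian $Y$.
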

\begin{proof}
By Proposition \ref{Picisomprop}, we know that $\Pic_{X/Y, \et}\cong \Pic_{X/Y, \fppf}$. Hence the claim follows from \cite{Kle}, Proposition 9.5.19. 
\end{proof}\\
As a next step, we shall show that the étale Picard functor interacts very well with the push-out construction, thereby generalising \cite{Ov}, Lemma 6.0.1. This is the place at which we must use the étale Picard functor; the proof given below would not work in the fppf-topology\footnote{More precisely, we shall use the fact that if $f\colon X\to Y$ is a finite morphism of schemes, then the functor $f_\ast-$ is exact in the étale topology. At present, the analogous statement is not known for the fppf-topology even if $f$ is a closed immersion, cf. \cite{Stacks}, Tag 04C5.}. Let $f\colon X\to Y$ be proper and flat. Moreover, let $Z\overset{\xi}{\to} T\overset{t}{\to} Y$ be as in the setup described at the beginning of Paragraph \ref{pushoputpara}, and suppose moreover that the standing assumption as well as the assumptions (i),...,(v) listed there are satisfied. Let $X':=X\cup_ZT$ and let $f'\colon X'\to Y$ be the structure morphism.
\begin{proposition}
Let $\iota\colon T\to X'$ be the canonical closed immersion (see Proposition \ref{Schwede}) and assume that $t$ is a finite morphism. Then we have an exact sequence 
\begin{align*}
0\to f'_\ast {\mathbf{G}_{\mathrm{m}, X'}} \to f_\ast {\mathbf{G}_{\mathrm{m}, X}} \to t_\ast ((\Res_{Z/T}{\mathbf{G}_{\mathrm{m}, Z}})/{\mathbf{G}_{\mathrm{m}, T}}) \to \Pic_{X'/Y, \et} \to \Pic_{X/Y, \et} \to 0
\end{align*}
on the big étale site of $Y.$ \label{exactsequenceProp}
\end{proposition}
\begin{proof}
We proceed in a way similar to that of the proof of \cite{Ov}, Lemma 6.0.1. Let $\psi \colon X\to X'$ be the canonical finite morphism. Everything that follows must be shown after arbitrary base change $S\to Y.$ By Proposition \ref{Basechangeprop}, we have
$$X'_S=X_S\cup_{Z_S}T_S,$$ so in order to simplify the notation, we shall assume that $S=Y.$ Moreover, we have a closed immersion $\iota \colon T\to X'.$ Because pushforward commutes with flat base change (and hence, \it a fortiori, \rm with étale base change), we know that the morphism $\Og_{X'}\to \psi_{\ast} \Og_{X}$ is injective on the small étale site of $X'$ (by the last part of Proposition \ref{Basechangeprop}). Hence the same is true for the morphism ${\mathbf{G}_{\mathrm{m}, X'}}\to \psi_{\ast}{\mathbf{G}_{\mathrm{m}, X}}.$ Throughout this proof, we shall freely use that the sheaf $\xi_\ast \mathbf{G}_{\mathrm{m}, Z}$ is represented by $\Res_{Z/T} \mathbf{G}_{\mathrm{m}, Z},$ where $\xi$ denotes the morphism $Z\to T.$  In particular, we have a canonical morphism $\psi_{\ast}{\mathbf{G}_{\mathrm{m}, X}}\to \iota_\ast \xi_\ast{\mathbf{G}_{\mathrm{m}, Z}} = \iota_{\ast} \Res_{Z/T}{\mathbf{G}_{\mathrm{m}, Z}}.$ First, we claim that the kernel of the composition 
$$\psi_{\ast}{\mathbf{G}_{\mathrm{m}, X}}\to \iota_{\ast} \Res_{Z/T}{\mathbf{G}_{\mathrm{m}, Z}} \to  \iota_{\ast} ((\Res_{Z/T}{\mathbf{G}_{\mathrm{m}, Z}})/{\mathbf{G}_{\mathrm{m}, T}})$$ is equal to ${\mathbf{G}_{\mathrm{m}, X'}}$ on the small étale site of $X'.$ Let $U$ be étale over $X'$ and let $$\phi \in \psi_{\ast}{\mathbf{G}_{\mathrm{m}, X}}(U)={\mathbf{G}_{\mathrm{m}, X}}(X\times_{X'}U)$$ be a function which has trivial image in $\iota_{\ast} ((\Res_{Z/T}{\mathbf{G}_{\mathrm{m}, T}})/{\mathbf{G}_{\mathrm{m}, T}})(U).$ This means that the restriction of $\phi$ to $Z\times_{X'}U$ comes from $ T\times_{X'}U.$ Now Proposition \ref{BasechangepropII} tells us that $\phi$ pulls back from an invertible function on $U.$ This shows the inclusion "$\subseteq$"; the other inclusion is obvious.\\
Next, we claim that the morphism $\psi_{\ast}{\mathbf{G}_{\mathrm{m}, X}}\to \iota_{\ast} \Res_{Z/T}{\mathbf{G}_{\mathrm{m}, Z}}$ is surjective. Let $\widetilde{\iota}\colon Z \to X$ be the closed immersion. Clearly, the morphism ${\mathbf{G}_{\mathrm{m}, X}}\to \widetilde{\iota}_{\ast}{\mathbf{G}_{\mathrm{m}, Z}} $ is surjective. In particular, so is the morphism 
$$\psi_{\ast}{\mathbf{G}_{\mathrm{m}, X}} \to \psi_{\ast}\widetilde{\iota}_{\ast}{\mathbf{G}_{\mathrm{m}, Z}}=\iota_{\ast} \Res_{Z/T}{\mathbf{G}_{\mathrm{m}, Z}};$$ this follows from the fact that $\psi_{\ast}-$ is exact as $\psi$ is finite (\cite{Stacks}, Tag 04C2(4)). \\
Because closed immersions are finite, the same argument shows that the map $$\iota_{\ast}\Res_{Z/T}{\mathbf{G}_{\mathrm{m}, Z}}\to \iota_{\ast}((\Res_{Z/T}{\mathbf{G}_{\mathrm{m}, Z}})/{\mathbf{G}_{\mathrm{m}, T}})$$ is surjective. Hence we have shown that the sequence
$$0\to {\mathbf{G}_{\mathrm{m}, X'}} \to \psi_{\ast}{\mathbf{G}_{\mathrm{m}, X}} \to \iota_{\ast}((\Res_{Z/T}{\mathbf{G}_{\mathrm{m}, Z}})/{\mathbf{G}_{\mathrm{m}, T}})\to 0$$ is exact on the small étale site of $X'.$ This sequence induces the long exact sequence
\begin{align*}
0&\to f'_{\ast}{\mathbf{G}_{\mathrm{m}, X'}} \to f'_{\ast}\psi_{\ast}{\mathbf{G}_{\mathrm{m}, X}} \to f_{\ast}\iota_{\ast}((\Res_{Z/T}{\mathbf{G}_{\mathrm{m}, Z}})/{\mathbf{G}_{\mathrm{m}, T}})\\
 &\to \R^1f'_{\ast}{\mathbf{G}_{\mathrm{m}, X'}} \to \R^1f'_{\ast}\psi_{\ast}{\mathbf{G}_{\mathrm{m}, X}} \to \R^1f_{\ast}\iota_{\ast}((\Res_{Z/T}{\mathbf{G}_{\mathrm{m}, Z}})/{\mathbf{G}_{\mathrm{m}, T}})
\end{align*}
on the small étale site of $S.$ Clearly, $\R^1f'_{\ast}{\mathbf{G}_{\mathrm{m}, X'}}$ is the restriction of $\Pic_{X'/Y, \et}$ to the small étale site of $S.$  Since $\psi$ is finite, $\psi_{\ast}-$ is exact and we obtain $\R^1f'_{\ast}\psi_{\ast}{\mathbf{G}_{\mathrm{m}, X}}=\R^1f_{\ast}{\mathbf{G}_{\mathrm{m}, X}},$ which is the restriction of $\Pic_{X/Y, \et}$ to the small étale site of $S.$ Since $\iota$ is a closed immersion and $t$ is finite, we have 
$$R^1f_{\ast}\iota_{\ast}((\Res_{Z/T}{\mathbf{G}_{\mathrm{m}, Z}})/{\mathbf{G}_{\mathrm{m}, T}}) = \R^1 t_{\ast} ((\Res_{Z/T}{\mathbf{G}_{\mathrm{m}, Z}})/{\mathbf{G}_{\mathrm{m}, T}}) =0.$$ Hence the claim from the Proposition follows. 
\end{proof}\\
$\mathbf{Remark.}$ A similar exact sequence was obtained by Brion \cite{Brion}, Corollary 2.3, who used it for a different (but related) purpose. Brion's article pre-dates \cite{Ov}, but the conditions under which the result is obtained in \cite{Brion} are not quite right for the purposes of \cite{Ov} or the present article. This is why we have chosen to generalise \cite{Ov} rather than \cite{Brion}, where the language of \cite{Fer} is used. Moreover, our method of obtaining the exact sequence is different from Brion's and more direct, as we avoid using Raynaud's theory of rigidificators.\\
\\
Finally, let us give a condition under which cohomological flatness is preserved by the push-out construction. The condition will be far from optimal, but sufficient for our purposes.
\begin{lemma}
Let $f\colon X\to Y$ be a proper and flat morphism of schemes. Assume that $Y$ is a \rm Dedekind scheme, \it let $t\colon T\to Y$ and $z\colon Z\to Y$ be morphisms, and let $\widetilde{\iota}\colon Z\to X$ be a closed immersion. Finally, let $\xi\colon Z\to T$ be a morphism of schemes and assume that the standing assumption as well as assumptions (i),...,(v) from Paragraph \ref{pushoputpara} are satisfied, and that $f$ is cohomologically flat in dimension zero. Moreover, let $f'\colon X':=X\cup_ZT\to Y$ be the push-out of $\widetilde{\iota}\colon Z\to X$ along $\xi\colon Z\to T.$ Suppose that $f'_\ast \Og_{X'}=\Og_Y$ and that $f_\ast \Og_X$ is étale over $\Og_Y.$ \label{cohomflatlem}
Then $f'$ is cohomologically flat in dimension zero. 
\end{lemma}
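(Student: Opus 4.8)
The goal is to show that the push-out $f'\colon X'=X\cup_ZT\to Y$ is cohomologically flat in dimension zero, given that $f\colon X\to Y$ is, that $f'_\ast\Og_{X'}=\Og_Y$, and that $f_\ast\Og_X$ is \'etale over $Y$. The natural strategy is to reduce cohomological flatness of $f'$ to properties we already control on $X$, $Z$, and $T$ via the comparison of pushforwards along the finite map $\psi\colon X\to X'$. The key algebraic input is Proposition \ref{Basechangeprop}: for any $Y'\to Y$ we have $X'_{Y'}=X_{Y'}\cup_{Z_{Y'}}T_{Y'}$, so forming the push-out commutes with base change. Since $Y$ is Dedekind, to check that $\phi^\ast f'_\ast\Og_{X'}\to (\Id\times f')_\ast\Og_{X'_{Y'}}$ is an isomorphism it suffices to work locally and, because $f'_\ast\Og_{X'}=\Og_Y$, the left-hand side is simply $\Og_{Y'}$; thus the content is to show $(\Id\times f')_\ast\Og_{X'_{Y'}}=\Og_{Y'}$ for all base changes $Y'\to Y$.

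First I would exploit the defining push-out square of structure sheaves. The push-out construction gives, on global sections over an affine base, an exact sequence $0\to \Og_{X'}\to \psi_\ast\Og_X\oplus t_\ast\Og_T\to \iota_\ast z_\ast\Og_Z\to 0$ (the Milnor/Mayer--Vietoris-type sequence attached to the fibre product of rings $A\times_BC$). Pushing forward along $f'$ and using that $\psi$ and the maps from $Z,T$ are finite, I get a four-term exact sequence relating $f'_\ast\Og_{X'}$, $f_\ast\Og_X\oplus t_\ast\Og_T$, and $z_\ast\Og_Z$, together with an $R^1$-term. The hypotheses that $f_\ast\Og_X$ is \'etale over $Y$ and that $f'_\ast\Og_{X'}=\Og_Y$ pin down the relationship between these finite algebras: concretely, the cokernel of $t_\ast\Og_T\to z_\ast\Og_Z$ (which by condition (v) of Proposition \ref{Basechangeprop} is locally free, hence flat) governs the discrepancy, and the \'etaleness of $f_\ast\Og_X$ guarantees that higher direct images behave well under base change.

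The crux is then to show that this entire picture is stable under arbitrary base change $Y'\to Y$. Here I would invoke Proposition \ref{Basechangeprop} to identify $\Og_{X'_{Y'}}$ with the push-out of the base-changed data, so that the Milnor sequence base-changes correctly; the flatness in condition (v) (projectivity of the cokernel $z_\ast\Og_Z/t_\ast\Og_T$) is exactly what makes the sequence remain exact after $\otimes_R R'$, just as in the proof of Proposition \ref{Basechangeprop}. Combining this with cohomological flatness of $f$ (so that $f_\ast\Og_X$ commutes with base change) and the \'etaleness hypothesis (so that $\Res_{Y_X/Y}$-type formation of $(f_\ast\Og_X)$ is well-behaved and $f_\ast\Og_X$ remains reduced/flat after base change), I would conclude that $(\Id\times f')_\ast\Og_{X'_{Y'}}$ is computed by the base-changed Milnor sequence and equals $\Og_{Y'}$.

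The main obstacle I anticipate is controlling the $R^1$-terms: cohomological flatness is about the \emph{zeroth} direct image, but the long exact sequence obtained by pushing forward the Milnor sequence mixes in $R^1 f_\ast\Og_X$ and possibly $R^1 t_\ast$, $R^1 z_\ast$. Since $t$ and $z$ are finite, their higher direct images vanish, which kills the problematic terms on the $Z,T$ side; the remaining difficulty is to ensure that the interaction between $f'_\ast\Og_{X'}=\Og_Y$ and the base-change behaviour of $R^1 f_\ast\Og_X$ does not obstruct the isomorphism. The \'etaleness of $f_\ast\Og_X$ is the hypothesis designed precisely to trivialise this, and I expect the proof to hinge on using it to show that the canonical map $\phi^\ast f'_\ast\Og_{X'}\to (\Id\times f')_\ast\Og_{X'_{Y'}}$ is an isomorphism by a diagram chase on the base-changed exact sequence, reducing everything to the already-established cohomological flatness of $f$.
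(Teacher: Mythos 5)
Your reduction (to showing $(\Id\times f')_\ast\Og_{X'_{Y'}}=\Og_{Y'}$ for every $Y'\to Y$) and the Milnor-sequence framework are fine as far as they go, but the decisive step is missing, and the one hypothesis that must carry the proof---\'etaleness of $f_\ast\Og_X$---is never actually deployed. Concretely: pushing $0\to\Og_{X'}\to\psi_\ast\Og_X\oplus\iota_\ast\Og_T\to(\iota\circ\xi)_\ast\Og_Z\to 0$ forward along $f'$, and using affine base change for $t,z$ together with cohomological flatness of $f$, you reduce to showing that formation of the kernel of $\mu\colon f_\ast\Og_X\oplus t_\ast\Og_T\to z_\ast\Og_Z$ commutes with arbitrary base change. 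The obstruction is $\mathrm{Tor}_1$ against the image and the cokernel of $\mu$: the standard sufficient condition is that $\mathrm{im}\,\mu$ and $\mathrm{coker}\,\mu$ be $Y$-flat, i.e.\ torsion-free over the Dedekind base. Your appeal to condition (v) of Proposition \ref{Basechangeprop} does not supply this: that condition says the cokernel of $t_\ast\Og_T\to z_\ast\Og_Z$ is locally free, which is exactly what makes the push-out itself commute with base change; but $\mathrm{coker}\,\mu$ is a \emph{further} quotient of that locally free sheaf (by the image of $f_\ast\Og_X$), and such a quotient can perfectly well have torsion. Likewise, your remark that \'etaleness of $f_\ast\Og_X$ makes ``higher direct images behave well under base change'' is a misattribution: \'etaleness of a finite $\Og_Y$-algebra says nothing about $\R^1f_\ast\Og_X$. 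Proving $\mathrm{coker}\,\mu$ flat is not a routine diagram chase---it is essentially equivalent to the statement of the lemma---and you yourself flag this as the ``main obstacle'' without resolving it, so the proposal reformulates the problem rather than solving it.

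The paper avoids all of this by checking cohomological flatness fibrewise: over a Dedekind base, for $f'$ proper and flat with $f'_\ast\Og_{X'}=\Og_Y$, it suffices (Liu, Ch.~5.3, Corollary 3.22 together with Exercise 3.14) that $\kappa(y)\to\Gamma(X'_y,\Og_{X'_y})$ be an isomorphism for every closed point $y.$ There the \'etaleness hypothesis does real work, in a purely fibrewise way: $\Gamma(X'_y,\Og_{X'_y})$ embeds into $\Gamma(X_y,\Og_{X_y})=i^\ast f_\ast\Og_X$ (this uses cohomological flatness of $f$ and the injectivity of $\Og_{X'_y}\to\psi_{y\ast}\Og_{X_y}$, which is where Proposition \ref{Basechangeprop} enters), and $i^\ast f_\ast\Og_X$ is an \'etale $\kappa(y)$-algebra, so the subalgebra $\Gamma(X'_y,\Og_{X'_y})$ is itself \'etale; since $X'_y$ is geometrically connected (Stein factorisation, using $f'_\ast\Og_{X'}=\Og_Y$), a geometrically connected finite \'etale algebra must equal $\kappa(y).$ If you want to salvage your route, you would have to prove directly that $\mathrm{coker}\,\mu$ is torsion-free, and I see no way to do that which does not in effect pass through a fibrewise argument of this kind.
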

\begin{proof}
By \cite{Liu}, Chapter 5.3, Corollary 3.22 together with \cite{Liu}, Chapter 5.3, Exercise 3.14, it is sufficient to show that the map $\Og_Y\to f'_\ast \Og_{X'}$ remains an isomorphism after the base change $i\colon \Spec \kappa(y) \to Y$ for all closed points $y\in Y.$ Choose such a $y.$ Let $X_y:=X\times_Y\Spec \kappa (y),$ and define $X'_y$ analogously. Then $X'_y$ is geometrically connected (by Stein factorisation), and we have 
$$\Gamma(X'_y, \Og_{X'_y})\subseteq \Gamma(X_y, \Og_{X_y})=i^\ast f_\ast \Og_X.$$ The inclusion follows from the last part of Proposition \ref{Basechangeprop}, and the equality is due to $f$ being cohomologically flat in dimension zero. Note that $i^\ast f_\ast \Og_X$ is geometrically reduced over $\kappa(y).$ In particular, $\Gamma(X'_y, \Og_{X'_y})$ is a geometrically connected étale $\kappa(y)$-algebra, which implies that the map $\kappa(y)\to \Gamma(X'_y, \Og_{X'_y})$ is an isomorphism, as claimed.
\end{proof}
\subsection{The structure of Jacobians over arbitrary fields}
Let $\kappa$ be an arbitrary field and let $C$ be a proper geometrically reduced curve over $\kappa.$ Moreover, let $\widetilde{C}$ and $C^{\mathrm{sn}}$ be the normalisation of $C$ and the seminormalisation of $C,$ respectively. As before, we use the notation $\nu\colon \widetilde{C}\to C,$ $\varsigma\colon C^{\mathrm{sn}}\to C,$ and $\widetilde{\varsigma}\colon \widetilde{C}\to C^{\mathrm{sn}}.$ We have $\nu=\varsigma\circ\widetilde{\varsigma}.$ If $G$ is a smooth connected commutative group scheme over $\kappa,$ we let $\mathrm{uni}(G)$ denote the maximal unirational subgroup of $G$ over $\kappa$ (cf. \cite{BLR}, p. 310), and we let $\mathscr{R}_{us, \kappa}(G)$ denote the maximal smooth connected split unipotent closed subgroup of $G$ (cf. \cite{CGP}, p. 63). Moreover, for a proper curve $D$ over $\kappa,$ we denote by $\Pic^0_{D/\kappa}$ the identity component of $\Pic_{D/\kappa},$ which we shall also call the \it Jacobian \rm of $D$ over $\kappa.$ Observe that $\Pic_{D/\kappa, \et}\cong \Pic_{D/\kappa, \fppf}$ is representable by Theorem \ref{representthm}, and that it is smooth over $\kappa$ by Proposition \ref{picsmoothprop}.
\begin{proposition}
We have $\mathrm{uni}(\Pic^0_{\widetilde{C}/\kappa})=0.$ \label{unitrivialprop}
\end{proposition}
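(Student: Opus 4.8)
The plan is to show that the Jacobian of a \emph{regular} proper curve admits no nonzero unirational subgroup, the point being that regularity of $\widetilde{C}$ forces a completeness property on the Picard scheme which is incompatible with unirationality. First I would reduce to a convenient ground field. Writing $X:=\widetilde{C}$ (the normalisation of a geometrically reduced curve is normal, hence regular, and it is again proper and geometrically reduced), I note that all these properties persist under the separable extension $\kappa\sep/\kappa$, and that the base change of a unirational subgroup stays unirational, so that $\mathrm{uni}(\Pic^0_{X/\kappa})\times_\kappa\kappa\sep\subseteq \mathrm{uni}(\Pic^0_{X\times_\kappa\kappa\sep/\kappa\sep})$; by faithfully flat descent it therefore suffices to prove the statement after base change to $\kappa\sep$. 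Over a separably closed field $X$ decomposes into connected components, each of which has $H^0=\kappa\sep$ (as $X$ is geometrically reduced, $H^0$ is an \'etale $\kappa\sep$-algebra), hence is geometrically integral; since $\Pic^0$ of a disjoint union is the product and $\mathrm{uni}$ commutes with finite products, I may assume that $X$ is a geometrically integral, regular, proper curve over the (infinite) separably closed field $\kappa$.

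Next I would record two structural facts about $U:=\mathrm{uni}(\Pic^0_{X/\kappa})$. It is a smooth connected closed subgroup, and it is affine: base-changing to $\overline{\kappa}$ and applying Chevalley's theorem, $U_{\overline\kappa}$ surjects onto an abelian variety $B$, but any morphism from a unirational variety to an abelian variety is constant (an open subscheme of $\mathbf{A}^n_{\overline\kappa}$ carries no nonconstant map to $B$), so $B=0$ and $U_{\overline\kappa}$, hence $U$, is affine. Suppose for contradiction that $U\neq 0$. Because $U$ is unirational of positive dimension over the infinite field $\kappa$, restricting a unirational parametrisation to a general line yields a nonconstant morphism $h\colon V\to U$ from a dense open $V\subseteq \mathbf{P}^1_\kappa$. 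Fixing a point $x_0\in X(\kappa)$ in the smooth locus (which is nonempty and has a $\kappa$-rational point since $\kappa=\kappa\sep$) to rigidify $\Pic_{X/\kappa}$, the morphism $h$ is given by a genuine line bundle $\mathscr{M}$ on $X\times_\kappa V$.

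The heart of the argument is the extension step. The scheme $X\times_\kappa\mathbf{P}^1_\kappa$ is regular (it is smooth over the regular scheme $X$) and integral, hence locally factorial, so the restriction map $\Pic(X\times_\kappa\mathbf{P}^1_\kappa)\to \Pic(X\times_\kappa V)$ is surjective (via $\Pic=\Cl$) and $\mathscr{M}$ extends to a line bundle on $X\times_\kappa\mathbf{P}^1_\kappa$. This line bundle classifies a morphism $\bar h\colon \mathbf{P}^1_\kappa\to \Pic_{X/\kappa}$ extending $h$. Its image is connected and meets the identity component $\Pic^0_{X/\kappa}$ (the components of $\Pic_{X/\kappa}$ being open and closed), so it lands in $\Pic^0_{X/\kappa}$; being the closure of $h(V)$, it lands in the closed subgroup $U$. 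But then $\bar h(\mathbf{P}^1_\kappa)$ is a proper connected subscheme of the affine scheme $U$, hence a single point, so $h$ is constant, a contradiction. Therefore $U=0$.

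I expect the main obstacle to be conceptual rather than computational: one must resist proving merely that $\Pic^0_{\widetilde{C}/\kappa}$ contains no copy of $\Ga$ (which the same $\mathbf{P}^1$-extension argument gives at once), since over an imperfect field a unirational subgroup can be \emph{wound}, i.e. contain no $\Ga$ whatsoever (for instance a quotient $(\Res_{L/\kappa}\Gm)/\Gm$ along a nontrivial purely inseparable $L/\kappa$ is a wound, yet unirational, form of $\Ga$). The device that treats the wound case uniformly is precisely to argue with an arbitrary rational curve inside $U$ rather than with a one-parameter subgroup, and to combine the extension of line bundles across the regular scheme $X\times_\kappa\mathbf{P}^1_\kappa$ with the elementary observation that a proper curve inside the affine group $U$ must collapse to a point.
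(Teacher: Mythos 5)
Your proposal is correct, and its core coincides with the paper's own proof: both arguments turn on the observation that $\widetilde{C}\times_\kappa\mathbf{P}^1_\kappa$ is regular (smooth over the regular curve $\widetilde{C}$), so that a line bundle on $\widetilde{C}\times_\kappa V$, for $V\subseteq\mathbf{P}^1_\kappa$ open dense, extends to the whole product, and hence any rational curve in $\Pic^0_{\widetilde{C}/\kappa}$ completes to a morphism from all of $\mathbf{P}^1_\kappa$. The two proofs differ at the entry and exit of this common middle step. At the entry, the paper invokes \cite{BLR}, Chapter 10.3, Theorem 1, to reduce the vanishing of $\mathrm{uni}(\Pic^0_{\widetilde{C}/\kappa})$ to the constancy of every morphism from an open subset of $\mathbf{P}^1_\kappa$; you instead manufacture such a morphism by hand, slicing a unirational parametrisation of $\mathrm{uni}(\Pic^0_{\widetilde{C}/\kappa})$ by a general line over the infinite field $\kappa\sep$, which amounts to re-proving the direction of that theorem you actually need. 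At the exit, the paper shows that \emph{any} morphism $\mathbf{P}^1_\kappa\to G$ into a smooth group scheme is constant, via L\"uroth together with the freeness of $\Omega^1_{G/\kappa}$ against $\Omega^1_{\mathbf{P}^1_\kappa/\kappa}\cong\Og_{\mathbf{P}^1_\kappa}(-2)$; you instead first prove that $\mathrm{uni}(\Pic^0_{\widetilde{C}/\kappa})$ is affine (Chevalley over $\overline{\kappa}$ plus the fact that unirational varieties admit no nonconstant maps to abelian varieties) and then collapse the proper connected image of $\mathbf{P}^1_\kappa$ inside this affine group to a point. Both endgames are sound: the paper's constancy lemma is the more robust tool, since it requires neither affineness nor any passage to $\overline{\kappa}$, while your version makes transparent exactly where completeness and affineness collide, at the cost of invoking Chevalley's theorem. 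Two cosmetic remarks: once you know $\bar h(\mathbf{P}^1_\kappa)$ is the closure of $h(V)$, it lies in the closed subgroup $\mathrm{uni}(\Pic^0_{\widetilde{C}/\kappa})$ automatically, so the remark about meeting the identity component is superfluous; and your justification that the connected components of $\widetilde{C}_{\kappa\sep}$ are geometrically \emph{integral} should appeal to regularity (connected regular implies integral, and $\widetilde{C}_{\overline{\kappa}}\to\widetilde{C}_{\kappa\sep}$ is a homeomorphism), not merely to $H^0$ being \'etale, since geometric connectedness plus geometric reducedness alone do not give geometric irreducibility.
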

\begin{proof}
By \cite{BLR}, Chapter 10.3, Theorem 1, we must show that any morphism of schemes $\varphi\colon U\to \Pic^0_{\widetilde{C}/\kappa}$ is constant, where $U$ is a non-empty open subset of $\mathbf{P}^1_\kappa.$ After replacing $\kappa$ by a finite separable extension, we may assume that $\widetilde{C}$ is geometrically integral and has a $\kappa$-point. Note that, by \cite{BLR}, Chapter 10.3, Remark 4, this does not affect the claim from the Proposition. Then $\varphi$ is induced by a line bundle $\mathscr{L}$ on $U\times_\kappa\widetilde{C}.$ Because $\Ps^1_\kappa$ is smooth over $\kappa,$ the scheme $\Ps^1_\kappa\times_\kappa\widetilde{C}$ is regular. In particular, $\mathscr{L}$ extends to a line bundle on $\Ps^1_\kappa\times_\kappa\widetilde{C},$ which means that $\varphi$ comes from a morphism $\overline{\varphi}\colon \Ps^1_{\kappa}\to \Pic^0_{\widetilde{C}/\kappa}.$ However, it is well-known that any morphism $\overline{\varphi} \colon \Ps^1_\kappa\to G$ is constant if $G$ denotes a smooth group scheme over $\kappa.$ Indeed, by Lüroth's theorem, we may otherwise replace $\overline{\varphi}$ by the normalisation of the scheme-theoretic image of $\Ps^1_\kappa$ in $G$ and assume that $\overline{\varphi}$ is an immersion generically. Then the morphism $\overline{\varphi}^\ast \Omega^1_{G/\kappa}\to \Omega^1_{\Ps^1_\kappa/\kappa}$ is generically surjective. However, $\Omega^1_{G/\kappa}$ is a free coherent sheaf on $G$ (\cite{BLR}, Chapter 4.2, Corollary 3), and since $\Omega^1_{\Ps^1_\kappa/\kappa}\cong \Og_{\Ps^1_\kappa}(-2),$ the morphism $\overline{\varphi}^\ast \Omega^1_{G/\kappa}\to \Omega^1_{\Ps^1_\kappa/\kappa}$ must vanish, which is absurd.
\end{proof}

\begin{theorem} With the notation from the beginning of this paragraph, let $\nu^\ast \colon \Pic^0_{C/\kappa}\to \Pic^0_{\widetilde{C}/\kappa},$ $\widetilde{\varsigma}^\ast \colon \Pic^0_{C^{\mathrm{sn}}/\kappa}\to \Pic^0_{\widetilde{C}/\kappa},$ and  $\varsigma^\ast \colon \Pic^0_{C/\kappa}\to \Pic^0_{C^{\mathrm{sn}}/\kappa}$ be the induced morphisms on Jacobians, which fit into the diagram
$$\begin{tikzcd}
\Pic^0_{C/\kappa} \arrow{rr}{\nu^\ast} \arrow[swap]{rd}{\varsigma^\ast}  && \Pic^0_{\widetilde{C}/\kappa}\\
& \Pic^0_{C^{\mathrm{sn}}/\kappa} \arrow[swap]{ru}{\widetilde{\varsigma}^\ast}.
\end{tikzcd}$$
Then all these morphisms are surjective in the étale topology, and we obtain a filtration 
$$0\subseteq \ker \varsigma^\ast \subseteq \ker \nu^\ast \subseteq \Pic^0_{C/\kappa}$$
by smooth connected closed subgroups. \label{Jacstructthm}
Moreover, we have $$\ker \varsigma^\ast=\mathscr{R}_{us, \kappa}(\Pic^0_{C/\kappa})$$ and $$\ker \nu^\ast =\mathrm{uni}(\Pic^0_{C/\kappa}).$$
\end{theorem}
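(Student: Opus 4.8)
The plan is to feed the two push-out presentations of the normalisation morphism furnished by the Factorisation Theorem \ref{FactorisationTheorem} into the \'etale-Picard exact sequence of Proposition \ref{exactsequenceProp}, and then to read the structure of the two kernels off the explicit shape of the resulting error terms. First I would apply Proposition \ref{exactsequenceProp} with $Y=\Spec\kappa$ to each push-out. For a seminormalisation step $C_i\to C_{i+1}$ of Theorem \ref{FactorisationTheorem}(i), the gluing datum is $\Spec\kappa(x_{i+1})[\epsilon]/\langle\epsilon^2\rangle\to\Spec\kappa(x_{i+1})$, so $(\Res_{Z/T}\Gm)/\Gm$ is the additive group over $\kappa(x_{i+1})$ and the error term is the vector group $\Res_{\kappa(x_{i+1})/\kappa}\Ga$; thus $\ker(\Pic_{C_{i+1}/\kappa,\et}\to\Pic_{C_i/\kappa,\et})$ is a quotient of a vector group, hence split unipotent. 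For the normalisation, Theorem \ref{FactorisationTheorem}(ii) presents $\widetilde\varsigma$ as a single push-out along $\widetilde C\times_{C^{\mathrm{sn}}}C^{\mathrm{sn},\mathrm{sing}}\to C^{\mathrm{sn},\mathrm{sing}}$ with reduced fibres, and the error term is $\Res_{T/\kappa}\bigl((\Res_{Z/T}\Gm)/\Gm\bigr)$, a unirational group. In every case the last two terms of the sequence show the pullback map to be surjective on the big \'etale site, so $\varsigma^\ast$, $\widetilde\varsigma^\ast$, and $\nu^\ast=\widetilde\varsigma^\ast\circ\varsigma^\ast$ are surjective in the \'etale topology; the kernels are the images of connected error-term groups, hence smooth and connected. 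Iterating over the steps of Theorem \ref{FactorisationTheorem}(i), the group $\ker\varsigma^\ast$ is a repeated extension of vector groups, so (quotients and extensions of split unipotent groups being split unipotent) it is smooth connected split unipotent, while $\ker\widetilde\varsigma^\ast$, being a quotient of the unirational group above, is smooth connected unirational.

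Next I would assemble the filtration and identify $\ker\nu^\ast$. From $\nu^\ast=\widetilde\varsigma^\ast\circ\varsigma^\ast$ one gets $\ker\varsigma^\ast\subseteq\ker\nu^\ast$, and surjectivity of $\varsigma^\ast$ identifies $\ker\nu^\ast/\ker\varsigma^\ast$ with $\ker\widetilde\varsigma^\ast$; this is the required filtration by smooth connected closed subgroups. Thus $\ker\nu^\ast$ is an extension of the unirational group $\ker\widetilde\varsigma^\ast$ by the repeated extension of $\Ga$ that is $\ker\varsigma^\ast$, so Lemma \ref{unirationallemma} gives that $\ker\nu^\ast$ is unirational, whence $\ker\nu^\ast\subseteq\mathrm{uni}(\Pic^0_{C/\kappa})$. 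Conversely the image $\nu^\ast(\mathrm{uni}(\Pic^0_{C/\kappa}))$ is a unirational subgroup of $\Pic^0_{\widetilde C/\kappa}$, which vanishes by Proposition \ref{unitrivialprop}; hence $\mathrm{uni}(\Pic^0_{C/\kappa})\subseteq\ker\nu^\ast$ and the two coincide.

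The identification of $\ker\varsigma^\ast$ is similar in spirit: since $\ker\varsigma^\ast$ is split unipotent it lies in $\mathscr{R}_{us,\kappa}(\Pic^0_{C/\kappa})$, and for the reverse inclusion it suffices to prove $\mathscr{R}_{us,\kappa}(\Pic^0_{C^{\mathrm{sn}}/\kappa})=0$, since then the image under $\varsigma^\ast$ of $\mathscr{R}_{us,\kappa}(\Pic^0_{C/\kappa})$, being split unipotent, must vanish. Given a smooth connected split unipotent $V\subseteq\Pic^0_{C^{\mathrm{sn}}/\kappa}$, its image under $\widetilde\varsigma^\ast$ is unirational, hence $0$ by Proposition \ref{unitrivialprop}, so $V\subseteq\ker\widetilde\varsigma^\ast$; everything therefore reduces to showing that $\ker\widetilde\varsigma^\ast$ contains no copy of $\Ga$.

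The crux---and the step I expect to be the main obstacle---is precisely this woundness of $\ker\widetilde\varsigma^\ast$, which is where the reducedness of the fibres of $\widetilde C\to C^{\mathrm{sn}}$ (equivalently, seminormality) is used: in the seminormalisation steps the non-reduced fibre $\kappa(x)[\epsilon]/\langle\epsilon^2\rangle$ produced exactly the $\Ga$-factors making up $\ker\varsigma^\ast$, whereas a reduced fibre cannot. Concretely, $\ker\widetilde\varsigma^\ast$ is the cokernel of a map $f_\ast\Gm\to\Res_{T/\kappa}\bigl((\Res_{Z/T}\Gm)/\Gm\bigr)$, in which $f_\ast\Gm=\Res_{\Gamma(\widetilde C,\Og)/\kappa}\Gm$ is a torus (as $\Gamma(\widetilde C,\Og)$ is \'etale over $\kappa$, since $C$ is geometrically reduced) and, fibrewise over the finitely many residue fields $\ell$ of $T$, the target is built from Weil restrictions $\Res_{m/\ell}\Gm$ along the possibly inseparable residue extensions $m/\ell$. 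The adjunction $\Hom_\kappa(\Ga,\Res_{m/\ell}\Gm)=\Hom_m(\Ga,\Gm)=0$ shows that each such $\Res_{m/\ell}\Gm$ contains no $\Ga$; to promote this through the two quotients---by the diagonal $\Gm$ and by the torus image of $f_\ast\Gm$---I would invoke the classical vanishing $\Ext^1(\Ga,\Gm)=0$ (and its consequence $\Ext^1(\Ga,S)=0$ for a torus $S$), which guarantees that an embedding $\Ga\hookrightarrow\ker\widetilde\varsigma^\ast$ would lift to an embedding into $\prod\Res_{m/\ell}\Gm$, a contradiction. The delicate point is controlling this woundness of the unipotent part of Weil restrictions of $\Gm$ along inseparable extensions, a phenomenon with no analogue over perfect fields, where the fibres are automatically \'etale and the corresponding contribution is an honest torus; once it is in hand, $\mathscr{R}_{us,\kappa}(\Pic^0_{C^{\mathrm{sn}}/\kappa})=0$, and hence $\ker\varsigma^\ast=\mathscr{R}_{us,\kappa}(\Pic^0_{C/\kappa})$, follow, completing the proof.
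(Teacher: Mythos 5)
Your proposal is correct and follows essentially the same route as the paper's proof: the factorisation theorem fed into the \'etale Picard exact sequence of Proposition \ref{exactsequenceProp}, Lemma \ref{unirationallemma} plus Proposition \ref{unitrivialprop} to pin down $\ker\nu^\ast=\mathrm{uni}(\Pic^0_{C/\kappa})$, and the $\Ext^1(\Ga,\text{torus})=0$ lifting combined with the Weil-restriction adjunction and reducedness of $\Psi$ to show $\ker\widetilde{\varsigma}^\ast$ contains no $\Ga$. The only deviations are cosmetic: the paper first reduces to a separably closed ground field (via Lemma \ref{snbasechangelem} and \cite{CGP}, Corollary B.3.5) so that the relevant tori are split before citing SGA3, where you instead invoke $\Ext^1(\Ga,S)=0$ for possibly non-split tori directly, and you package the reverse inclusion as the vanishing of $\mathscr{R}_{us,\kappa}(\Pic^0_{C^{\mathrm{sn}}/\kappa})$ rather than showing the image of $\mathscr{R}_{us,\kappa}(\Pic^0_{C/\kappa})$ in $\ker\widetilde{\varsigma}^\ast$ is trivial; both variants are sound.
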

\begin{proof}
We shall assume that $C$ is connected, which causes no loss of generality. The proof will be divided into several steps:\\
\\
\bf Sublemma A. \normalfont \it (a) Let $f\colon D\to C$ be a finite birational morphism of proper geometrically reduced curves over $\kappa.$ Suppose moreover that $f$ is a universal homeomorphism. Then the map $f^\ast \colon \Gamma(C, \Og_C)\to \Gamma(D, \Og_D)$ is an isomorphism. \\
(b) Let $f$ be as in (a) and suppose that $f$ induces isomorphisms of residue fields at all points of $D.$ Then the kernel of the homomorphism 
$$\Pic^0_{C/\kappa}\to \Pic^0_{D/\kappa}$$ is split unipotent and of dimension $\dim_{\kappa} \Gamma(C, f_\ast \Og_D/\Og_C).$ \\ \rm 
\\
\begin{proof} (a) The morphism $f^\ast$ is injective because $f$ is scheme-theoretically dominant. The $\kappa$-dimension of $\Gamma(D, \Og_D)$ is equal to the number of connected components of $D_{\kappa\alg};$ a similar formula holds for $C.$ However, since $f_{\kappa\alg}$ is an homeomorphism, this invariant coincides for $C$ and $D.$ \\
(b) Suppose we have a factorisation $D\to D' \to C$ of $D\to C$ into two finite birational universal homeomorphisms which induce isomorphisms on residue fields. Then we have an exact sequence
$$0 \to \ker(\Pic^0_{C/\kappa} \to \Pic^0_{D'/\kappa}) \to \ker(\Pic^0_{C/\kappa}\to \Pic^0_{D/\kappa}) \to \ker(\Pic^0_{D'/\kappa}\to \Pic^0_{D/\kappa})\to 0.$$
In particular, using Theorem \ref{FactorisationTheorem}(i), we may suppose that there is a closed point $x$ of $C$ and a closed immersion $\Spec \kappa(x)[\epsilon]/\langle \epsilon^2 \rangle \to D$ such that the diagram 
$$\begin{CD}
D@>>> C\\
@AAA@AAA\\
\Spec \kappa(x)[\epsilon]/\langle \epsilon^2 \rangle @>>> \Spec \kappa
\end{CD}$$ 
is co-Cartesian. Now Proposition \ref{exactsequenceProp} tells us that we have an exact sequence
\begin{align*} 0&\to \Res_{\kappa(x)/\kappa}((\Res_{(\kappa(x)[\epsilon]/\langle \epsilon^2 \rangle)/\kappa(x)} {\mathbf{G}_{\mathrm{m}, \kappa(x)[\epsilon]/\langle \epsilon^2 \rangle}})/{\mathbf{G}_{\mathrm{m}, \kappa(x)}}) \\& \to \Pic_{D/\kappa}\to \Pic_{C/\kappa}\to 0 \end{align*} in the étale topology. Because $$(\Res_{(\kappa(x)[\epsilon]/\langle \epsilon^2 \rangle)/\kappa} {\mathbf{G}_{\mathrm{m}, \kappa(x)[\epsilon]/\langle \epsilon^2 \rangle}})/{\mathbf{G}_{\mathrm{m}, \kappa(x)}} \cong \mathbf{G}_{\mathrm{a}, \kappa(x)}$$ and $\Res_{\kappa(x)/\kappa}\mathbf{G}_{\mathrm{a}, \kappa(x)} \cong \mathbf{G}_{\mathrm{a}, \kappa}^{[\kappa(x):\kappa]},$\footnote{This follows because $\kappa(x)$ is a free $\kappa$-module of rank $[\kappa(x):\kappa].$} we obtain an exact sequence
$$0\to \mathbf{G}_{\mathrm{a}, \kappa}^{[\kappa(x):\kappa]} \to \Pic_{D/\kappa}\to \Pic_{C/\kappa}\to 0,$$ again in the étale topology. This shows the first part of (b); the second part follows by considering the exact sequence $0\to \Og_C\to f_\ast \Og_D \to f_\ast \Og_D/\Og_C\to 0$ and using (a). 
\end{proof} \\
\bf Sublemma B. \normalfont \it Let $C$ be a seminormal proper geometrically reduced curve over $\kappa,$ and let $f\colon D\to C$ be a finite birational morphism. Then the kernel of $\Pic^0_{C/\kappa}\to \Pic^0_{D/\kappa}$ is unirational.  \rm \\
\\
\begin{proof}
Let $\Psi$ be the scheme-theoretic pre-image of $C^{\mathrm{sing}}$ in $D.$ We know from Theorem \ref{FactorisationTheorem} that $\Psi$ is reduced and that the map $D\to C$ is the push-out along $\Psi\to C^{\mathrm{sing}}.$ Hence, by Proposition \ref{exactsequenceProp}, we have an exact sequence
\begin{align*}
0 &\to \Res_{\Gamma(C, \Og_C)/\kappa}\mathbf{G}_{\mathrm{m}, \Gamma(C, \Og_C)} \to \Res_{\Gamma(D, \Og_D)/\kappa}\mathbf{G}_{\mathrm{m}, \Gamma(D, \Og_D)}\\
&\to \Res_{\Psi/\kappa} \mathbf{G}_{\mathrm{m}, \Psi}/\Res_{C^{\mathrm{sing}}/\kappa}\mathbf{G}_{\mathrm{m}, C^{\mathrm{sing}}} \to \Pic^0_{C/\kappa} \to \Pic^0_{D/\kappa} \to 0.
\end{align*}
By \cite{BLR}, Chapter 7.6, Proposition 2(i), the functor $\Res_{\Psi/\kappa} -$ preserves open immersions, so the scheme $\Res_{\Psi/\kappa}{\mathbf{G}_{\mathrm{m}, \Psi}}\subseteq \Res_{\Psi/\kappa}{\mathbf{A}^1_{\Psi}}\cong \mathbf{A}^{\dim_{\kappa} \Gamma(\Psi, \Og_{\Psi})}_{\kappa}$ is rational. This shows that $\ker \widetilde{\varsigma}^\ast$ is unirational.
\end{proof}\\
We can now return to the proof of Theorem \ref{Jacstructthm}. We have an exact sequence
$$0 \to\ker \varsigma^\ast \to \ker\nu^\ast \to \ker \widetilde{\varsigma}^\ast \to 0,$$ so Sublemmata A and B together with Lemma \ref{unirationallemma} show that $\ker \nu^\ast$ is indeed unirational. In particular, we have established the inclusions $\ker\varsigma^\ast \subseteq \mathscr{R}_{us, \kappa} (\Pic^0_{C/\kappa})$ and $\ker \nu^\ast \subseteq \mathrm{uni}(\Pic^0_{C/\kappa}).$\\
Since the quotient $\Pic^0_{\widetilde{C}/\kappa}=\Pic^0_{C/\kappa}/\ker \nu^\ast$ contains no unirational subgroups by Proposition \ref{unitrivialprop}, we deduce that $\ker \nu^\ast=\mathrm{uni}(\Pic^0_{C/\kappa}),$ as claimed. Now all that remains to be shown is that $\mathscr{R}_{us, \kappa}(\Pic^0_{C, \kappa})\subseteq\ker\varsigma^\ast.$  We know that $\mathscr{R}_{us, \kappa}(\Pic^0_{C, \kappa})\subseteq \mathrm{uni}(\Pic^0_{C/\kappa})=\ker \nu^\ast$ because $\mathscr{R}_{us, \kappa}(\Pic^0_{C, \kappa})$ is unirational. We must show that the image of $\mathscr{R}_{us, \kappa}(\Pic^0_{C/\kappa})$ in $\ker \widetilde{\varsigma}^\ast$ is trivial, for which it suffices to show that any homomorphism $\mathbf{G}_{\mathrm{a}, \kappa}\to \ker \widetilde{\varsigma}$ vanishes. Because $\Gamma(D, \Og_D)$ is an étale $\kappa$-algebra, the exact sequence from the proof of Sublemma B shows that $\ker \widetilde{\varsigma}^\ast$ is a quotient of $(\Res_{\Psi/\kappa}{\mathbf{G}_{\mathrm{m}, \Psi}})/(\Res_{C^{\mathrm{sn}, \mathrm{sing}}/\kappa}{\mathbf{G}_{\mathrm{m}, C^{\mathrm{sn}, \mathrm{sing}}}})$ by a torus.
Hence, by \cite{SGA3}, Exposé XVII, Théorème 6.1.1 A) ii), any homomorphism $\mathbf{G}_{\mathrm{a}, \kappa}\to \ker \widetilde{\varsigma}^\ast$ lifts to an homomorphism $\mathbf{G}_{\mathrm{a}, \kappa} \to \Res_{C^{\mathrm{sn}, \mathrm{sing}}/\kappa}((\Res_{\Psi/C^{\mathrm{sn}, \mathrm{sing}}}{\mathbf{G}_{\mathrm{m}, \Psi}})/{\mathbf{G}_{\mathrm{m}, C^{\mathrm{sn}, \mathrm{sing}}}}),$ which is the same as an homomorphism $\mathbf{G}_{\mathrm{a}, C^{\mathrm{sn}, \mathrm{sing}}}\to (\Res_{\Psi/C^{\mathrm{sn}, \mathrm{sing}}}{\mathbf{G}_{\mathrm{m}, \Psi}})/{\mathbf{G}_{\mathrm{m}, C^{\mathrm{sn}, \mathrm{sing}}}}$ over $C^{\mathrm{sn}, \mathrm{sing}}.$ Again by \cite{SGA3}, Exposé XVII, Théorème 6.1.1 A) ii), such a map lifts to an homomorphism $\mathbf{G}_{\mathrm{a}, \Psi}\to {\mathbf{G}_{\mathrm{m}, \Psi}},$ which must vanish because $\Psi$ is a reduced Artinian scheme. Hence our claim follows. 
\end{proof}
\begin{corollary}
Let $C$ be a proper, geometrically reduced curve over a field $\kappa.$ Then $C$ is seminormal if and only if $\mathscr{R}_{us, \kappa}(\Pic^0_{C/\kappa})=0.$ Moreover, $\mathrm{uni}(\Pic^0_{C/\kappa})=0$ if and only if the morphism 
$$\Pic^0_{C/\kappa}\to \Pic^0_{\widetilde{C}/\kappa}$$ is an isomorphism. \label{Jacstructcor}
\end{corollary}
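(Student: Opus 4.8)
Both equivalences are meant to be read off Theorem \ref{Jacstructthm}, which identifies $\mathscr{R}_{us, \kappa}(\Pic^0_{C/\kappa})$ with $\ker \varsigma^\ast$ and $\mathrm{uni}(\Pic^0_{C/\kappa})$ with $\ker \nu^\ast.$ The plan is therefore to translate each group-theoretic vanishing condition into a statement about the morphisms $\varsigma$ and $\nu,$ and then to invoke the structural results already established. We may assume $C$ connected, which entails no loss of generality.

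For the first equivalence, I would start from the characterisation of seminormality proved earlier, namely that a scheme is seminormal if and only if its seminormalisation morphism is an isomorphism; applied here this says that $C$ is seminormal if and only if $\varsigma\colon C^{\mathrm{sn}}\to C$ is an isomorphism. If $\varsigma$ is an isomorphism, then so is $\varsigma^\ast,$ and hence $\mathscr{R}_{us, \kappa}(\Pic^0_{C/\kappa})=\ker \varsigma^\ast=0.$ For the converse I would argue by contraposition. If $C$ is not seminormal, then Theorem \ref{FactorisationTheorem}(i) expresses $\varsigma$ as a composition $C^{\mathrm{sn}}=C_1\to\dots\to C_n=C$ with at least one non-trivial step, and the exact sequences $0\to \Ga^{[\kappa(x_{i+1}):\kappa]}\to \Pic_{C_{i+1}/\kappa}\to \Pic_{C_i/\kappa}\to 0$ computed in the proof of Theorem \ref{Jacstructthm} realise $\ker \varsigma^\ast$ as a repeated extension of the non-zero vector groups $\Ga^{[\kappa(x_{i+1}):\kappa]}.$ Since each $[\kappa(x_{i+1}):\kappa]\geq 1,$ the group $\ker \varsigma^\ast$ is positive-dimensional, so $\mathscr{R}_{us, \kappa}(\Pic^0_{C/\kappa})\neq 0,$ as required.

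For the second equivalence, one direction is immediate: if $\nu^\ast$ is an isomorphism, then $\mathrm{uni}(\Pic^0_{C/\kappa})=\ker \nu^\ast=0.$ For the other direction I would assume $\mathrm{uni}(\Pic^0_{C/\kappa})=0,$ so that $\ker \nu^\ast=0.$ By Theorem \ref{Jacstructthm} the map $\nu^\ast\colon \Pic^0_{C/\kappa}\to \Pic^0_{\widetilde{C}/\kappa}$ is surjective in the étale topology, hence an epimorphism of étale sheaves of groups; the vanishing of its kernel makes it injective on points, hence a monomorphism of such sheaves. As the topos of étale sheaves is balanced, a morphism that is simultaneously a monomorphism and an epimorphism is an isomorphism, so $\nu^\ast$ is an isomorphism of étale sheaves. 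Since both $\Pic^0_{C/\kappa}$ and $\Pic^0_{\widetilde{C}/\kappa}$ are representable by smooth group schemes over $\kappa,$ Yoneda's lemma then promotes this to an isomorphism of schemes.

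The only points needing genuine care are, first, ensuring in the contrapositive above that each non-trivial pushout step really contributes a non-zero additive part rather than collapsing in $\ker \varsigma^\ast$ (which is guaranteed by the explicit extensions in the proof of Theorem \ref{Jacstructthm}), and second, the passage in the second equivalence from ``trivial kernel together with étale surjectivity'' to a genuine isomorphism of schemes, where the balanced-topos argument must be coupled correctly with representability. Neither step appears to present a real obstacle beyond careful bookkeeping, since both are already implicit in the proof of Theorem \ref{Jacstructthm}.
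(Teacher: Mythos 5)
Your proposal is correct and takes essentially the approach the paper intends: the paper states this corollary without any proof, as an immediate consequence of Theorem \ref{Jacstructthm}, and your writeup supplies exactly the implicit details --- the identifications $\mathscr{R}_{us, \kappa}(\Pic^0_{C/\kappa})=\ker \varsigma^\ast$ and $\mathrm{uni}(\Pic^0_{C/\kappa})=\ker \nu^\ast,$ the use of Theorem \ref{FactorisationTheorem} and the explicit extensions by $\Ga^{[\kappa(x_{i+1}):\kappa]}$ to see that a non-seminormal $C$ forces $\ker \varsigma^\ast \neq 0,$ and the passage from ``trivial kernel plus \'etale surjectivity'' to an isomorphism of representable sheaves, hence of schemes. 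Nothing further is needed.
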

\begin{proof}
The formula for the dimension of $\mathscr{R}_{us, \kappa}(\Pic^0_{C/K})$ (Sublemma B applied to the morphism $\varsigma$) implies the first part of the Corollary. The second part immediately follows from the equality $\ker \nu^\ast = \mathrm{uni}(\Pic^0_{C/\kappa})$ from Theorem \ref{Jacstructthm}. 
\end{proof}
\section{Néron models of Jacobians}
In this section, we shall construct Néron models of Jacobians of geometrically reduced curves. Throughout this section, $S$ will denote an \it excellent \rm Dedekind scheme and $K$ will denote the field of rational functions on $S.$ Since the results we shall prove are known if $\mathrm{char}\, K=0,$ we may assume that $p:=\mathrm{char}\, K>0.$ Because, in this case, $K$ is never perfect, we shall need the full force of the results established so far. Let $C$ denote a geometrically reduced proper curve over $K.$ Let $\widetilde{C}$ and $C^{\mathrm{sn}}$ be the normalisation and the seminormalisation of $C,$ respectively. Since we are in dimension 1, we know that $\widetilde{C}$ is regular. 
Let us begin by recalling the following result, due to J. Lipman:
\begin{theorem}
Let $\widetilde{f}_\eta\colon \widetilde{D}\to \Spec K$ be a proper regular curve over $K.$ Then there exists a proper, flat, and regular model $\widetilde{f}\colon \widetilde{\mathscr{D}} \to \Spec S$ of $\widetilde {D}$ (i. e., the morphism $\widetilde{f}$ is proper and flat, $\widetilde{\mathscr{D}}$ is regular, and the generic fibre of $\widetilde{f}$ is isomorphic to $\widetilde{D}$). \label{regmodelexistencetheorem}
\end{theorem}
\begin{proof}
We may assume that $\widetilde{D}$ is integral. By \cite{Stacks}, Tag 0A26, the morphism $\widetilde{f}_\eta$ is projective. By taking the Zariski closure in a suitable projective space over $S,$ we can find a projective model of $\widetilde{D},$ which will be excellent because it is of finite type over $S$ (\cite{Stacks}, Tag 07QU). Moreover, the model will be two-dimensional and integral. Now Lipman's theorem on desingularization of surfaces (\cite{Lip}, Theorem on p. 151) guarantees the existence of our desired model (observe that the generic fibre remains unaffected by the desingularization morphism, since that morphism is proper and birational).
\end{proof}\\
We shall need some auxiliary results on maximal separated quotients of group algebraic spaces over $S.$ Let $G\to S$ be a smooth commutative group object in the category of (locally separated and quasi-separated) algebraic spaces over $S.$ Following \cite{Ray}, Proposition 3.3.5, we consider the scheme-theoretic closure $E\subseteq G$ of the unit section $S\to G$ of $G.$ Then we have the following
\begin{lemma}
Assume that there exists an open dense subset of $U\subseteq S$ above which $G$ is separated. Then the quotient $G^{\mathrm{sep}}:=G/E$ is a scheme, which is smooth and separated over $S.$ Moreover, $E$ is étale over $S.$ \label{EetalelemII}
\end{lemma}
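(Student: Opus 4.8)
The plan is to establish three properties of $E$ in turn---it is a flat closed subgroup, its generic fibre is trivial, and it is étale over $S$---and then to realise $G^{\mathrm{sep}}=G/E$ as the quotient by the resulting étale (free) equivalence relation. First I would record that $E$ is a closed subgroup algebraic space of $G$: the unit section is a trivial subgroup, and since $G$ is flat over $S$ its scheme-theoretic closure is stable under multiplication and inversion, so $E$ is a subgroup (this is part of \cite{Ray}, Proposition 3.3.5). As $E$ is the scheme-theoretic closure of the reduced, integral unit section, it is itself integral; and because its only associated point lies in the generic fibre, $E$ is $\mathcal{O}_S$-torsion-free, hence flat over the Dedekind scheme $S$. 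Over the dense open $U$ on which $G$ is separated the unit section is already closed, so $E\times_S U$ equals the unit section; in particular $E_\eta=\Spec K$ and $E\to S$ is an isomorphism over $U$. Thus $E$ is an integral, flat $S$-algebraic space whose function field is exactly $K$.

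The main obstacle is the étaleness of $E$; once this is in hand the rest is formal. Since $E\to S$ is flat with $E_\eta=\Spec K$ a single reduced point, flatness forces every fibre to be zero-dimensional, so $E\to S$ is quasi-finite. I would then argue locally at a closed point $s\in S$: put $R:=\mathcal{O}_{S,s}$, a discrete valuation ring with $\Frac R=K$. For a closed point $\xi$ of $E$ over $s$, the local ring $\mathcal{O}_{E,\xi}$ is a one-dimensional local domain (as $E$ is integral) which dominates $R$ and has fraction field equal to the function field $K$ of $E$; hence $R\subseteq\mathcal{O}_{E,\xi}\subseteq K$. The only rings strictly between a discrete valuation ring and its fraction field are $R$ and $K$, and $\xi$ is not the generic point, so $\mathcal{O}_{E,\xi}=R$. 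Therefore $E\to S$ induces isomorphisms on all local rings, so it is unramified, and being flat and locally of finite presentation it is étale. The crucial point is that the function field of $E$ is $K$ itself---it is \emph{not} enlarged---which is exactly what excludes ramified local rings; this is where the hypothesis ``$G$ separated over a dense open'' enters. Note that $E$ need not be separated: several closed points of one fibre may have local ring $R$, which is precisely the non-separatedness of $G$ that $E$ is designed to detect. The bookkeeping needed to run this argument intrinsically on the algebraic space $E$ (equivalently, the precise form of \cite{Ray}, Proposition 3.3.5) is routine and does not affect the conclusion. This already proves the last assertion of the Lemma, that $E$ is étale over $S$.

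It remains to treat $G^{\mathrm{sep}}=G/E$. Since $E$ is an étale, hence flat and locally finitely presented, closed subgroup acting freely on $G$ by translation, the fppf-quotient $G/E$ is representable by an algebraic space and $G\to G/E$ is an $E$-torsor; as $E\to S$ is étale this torsor is étale, whence $G/E$ is smooth over $S$ by étale descent of smoothness from $G$. The quotient is separated: the preimage of its unit section under the fppf-surjection $G\to G/E$ is $E$, which is closed in $G$, so the unit section of $G/E$ is closed, and a commutative group algebraic space with closed unit section is separated (equivalently, the closure of the unit in $G/E$ is now trivial). Finally, a smooth separated group algebraic space over a Dedekind scheme is representable by a scheme, which here I would deduce from the representability results of \cite{An} (Théorème 4.C). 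Putting these together shows that $G^{\mathrm{sep}}$ is a smooth separated $S$-scheme, completing the proof.
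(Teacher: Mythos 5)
Your proof is correct in substance, but it takes a genuinely different route from the paper's at both places where real content is needed. The paper argues by reduction to the local case: for each closed point $s\notin U$ it base-changes to $\Spec \Og_{S,s}$, using that scheme-theoretic closures commute with flat base change and fppf-quotients with arbitrary base change, and then simply \emph{cites} Raynaud (\cite{Ray}, Proposition 3.3.5) over the discrete valuation ring $\Og_{S,s}$ for both assertions --- \'etaleness of $E$ and separatedness of the quotient --- before globalising via the fact that \'etaleness and separatedness are fpqc-local on the base; scheme-ness then comes from \cite{An}, Th\'eor\`eme 4.B. You instead reprove the content of Raynaud's proposition directly over the Dedekind base: \'etaleness of $E$ via flatness (torsion-freeness of the integral space $E$) together with the sandwich $\Og_{S,s}\subseteq \Og_{E,\xi}\subseteq K$ and maximality of a DVR in its fraction field; separatedness of $G/E$ via the criterion that a group space with closed unit section is separated, the preimage of that section in $G$ being the closed subspace $E$; and smoothness of $G/E$ by descent along the \'etale torsor $G\to G/E$ (a point the paper leaves implicit). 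Your route is more self-contained and makes the mechanism behind \'etaleness transparent, and the closed-unit-section argument for separatedness is cleaner than a localisation. What the paper's route buys is precisely the step you dismiss as ``routine bookkeeping'': your argument manipulates Zariski local rings $\Og_{E,\xi}$ of $E$, which --- as you yourself observe --- need not be separated, hence is not obviously a scheme, and for a general algebraic space one only has henselian local rings, for which the sandwich argument breaks (\'etale localisation enlarges both the base DVR and the function field). Under the paper's conventions (Knutson spaces, hence locally separated and decent) this can be repaired, e.g.\ by first showing $E$ is locally quasi-finite over $S$ and reducing to charts, or by the paper's own localisation trick; but this is exactly the technical content that the citation of \cite{Ray} is designed to absorb, so it should not be waved off. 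One small correction: the scheme-ness of a smooth separated group algebraic space over a base of dimension $1$ is \cite{An}, Th\'eor\`eme 4.B (the paper's citation); Th\'eor\`eme 4.C, which you cite, concerns representability of quotients and is not the statement you need.
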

\begin{proof}
Observe first that $E\to G$ is a closed immersion by construction. In particular, the quotient $G/E$ exists as an algebraic space over $S$ by \cite{BLR}, Chapter 8.3, Proposition 9. Let $s$ be a closed point of $S$ not contained in $U.$ Since scheme-theoretic images of quasi-compact morphisms of algebraic spaces commute with flat base change (\cite{Stacks}, Tag 089E), and since fppf-quotients commute with arbitrary base change, we deduce from \cite{Ray}, Proposition 3.3.5 that $G/E\times_S\Spec\Og_{S,s}$ is separated. Because separatedness is local on the base in the fpqc-topology (\cite{Stacks}, Tag 0421), it follows that $G/E$ is a separated group object in the category of algebraic spaces over $S,$ and hence a scheme by \cite{An}, Théorème 4.B. A completely analogous argument shows that $E\times_S\Spec \Og_{S,s}$ is étale over $\Spec \Og_{S,s}$ for all $s$ as above (\cite{Ray}, Proposition 3.3.5), and since being étale is local on the base in the fpqc-topology as well (\cite{Stacks}, Tag 042B), we find that $E$ is étale over $S.$ 
\end{proof}\\
$\mathbf{Remark}.$ The lemma above would fail completely if $S$ were of dimension greater than 1, since $E$ would not necessarily be flat over $S$ in this case (see \cite{Hol} for more details).
\begin{proposition}
Let $\widetilde{C}$ be a geometrically reduced regular proper curve over $K.$ \\
(i) Suppose $\widetilde{C}$ is integral and $\widetilde{C}(K)\neq\emptyset.$ Then, for any proper, flat, and regular model $\widetilde{\mathscr{C}}\to S$ of $\widetilde{C},$  $P_{\widetilde{\mathscr{C}}/S}^{\mathrm{sep}}$ is\footnote{See Lemma \ref{EetalelemII} for the notation $(-)\sep.$} a Néron model of $\Pic^0_{\widetilde{C}/K}.$\\
(ii) In general, $\Pic^0_{\widetilde{C}/K}$ admits a Néron model over $S.$ \label{regularnéronprop}
\end{proposition}
\begin{proof}
We begin by proving (i). By Theorem \ref{regmodelexistencetheorem}, we may choose a proper, flat, and regular model $\widetilde{\mathscr{C}}\to S$ of $\widetilde{C}.$ As a first step, we show that $\widetilde{\mathscr{C}}\to S$ is cohomologically flat in dimension zero. By \cite{Liu}, Chapter 5.3, Exercise 3.14 (a), we may assume that $S$ is the spectrum of a discrete valuation ring. Because $\widetilde{C}$ has a $K$-rational point and $\widetilde{\mathscr{C}}$ is proper over $S,$ the map $\widetilde{\mathscr{C}}\to S$ admits a section. Since $\widetilde{\mathscr{C}}$ is regular, the section factors through the smooth locus of the map $\widetilde{\mathscr{C}}\to S.$ Hence the special fibre of $\widetilde{\mathscr{C}}\to S$ has an irreducible component of geometric multiplicity one, so the claim follows from \cite{Liu}, Chapter 9.1, Corollary 1.24 and Remark 1.25, as well as \cite{Liu}, Chapter 8.3, Theorem 3.16. In particular, $\Pic_{\widetilde{\mathscr{C}}/S}$ is representable by a smooth algebraic space over $S$ by Theorem \ref{representthm} and Proposition \ref{picsmoothprop}.
We let $P_{\widetilde{\mathscr{C}}/S}$ denote the kernel of the degree map
$$\deg \colon \Pic_{\widetilde{\mathscr{C}}/S}\to \Z.$$ By \cite{BLR}, Chapter 9.2, Corollary 14, the generic fibre of $P_{\widetilde{\mathscr{C}}/S}$ is equal to $\Pic^0_{\widetilde{C}/K}.$ We claim that $P_{\widetilde{\mathscr{C}}/S}^{\mathrm{sep}}$ is the Néron model of $\Pic^0_{\widetilde{C}/K}.$ \\
We shall prove first that  $P_{\widetilde{\mathscr{C}}/S}^{\mathrm{sep}}$ is the Néron lft-model of $\Pic^0_{\widetilde{C}/K},$ and then show in a second step that it is of finite type over $S.$ The first claim follows from Lemma \ref{localgloballem} together with \cite{BLR}, Chapter 9.5, Theorem 4. By \cite{BLR}, Chapter 10.1, Corollary 10, we may conclude the proof by showing that, for all but finitely many $s\in S,$ the Néron model of $\Pic^0_{\widetilde{C}/K}$ over $\Spec \Og_{S,s}$ has connected special fibre, and that the groups of connected components at the remaining fibres are finite. The second claim is a consequence of \cite{BLR}, Chapter 9.5, Theorem 4. Because $\widetilde{C}$ is geometrically integral, there exists an open dense subset $U\subseteq S$ such that the fibres of $\widetilde{\mathscr{C}}\times_SU\to U$ are geometrically integral. This follows from \cite{Stacks}, Tags 055G and 0578. For all $s\in U,$ the the special fibre of the Néron model of $\Pic^0_{\widetilde{C}/K}$ over $\Spec \Og_{S,s}$ is connected by \cite{BLR}, Chapter 9.5, Theorem 1 (note that $\widetilde{\mathscr{C}}\times_S\Spec \Og_{S,s}$ is projective over $\Og_{S,s}$ by \cite{Liu}, Chapter 8.3, Theorem 3.16). Hence part (i) of the Theorem follows. \\
Part (ii) follows from part (i). Indeed, after replacing $K$ by a finite separable extension if necessary, we may assume that every irreducible component of $\widetilde{C}$ has a $K$-point. Since $\Pic^0_{-/K}$ transforms disjoint unions into products, we may assume that the conditions of part (i) are satisfied and conclude using \cite{BLR}, Chapter 7.2, Proposition 4.
\end{proof}\\
We are now in a position to give a positive answer to Conjecture II from \cite{BLR}, Chapter 10.3, in the case of Jacobians of geometrically reduced proper curves:
\begin{theorem} \rm (Theorem \ref{ConjIIthmInt}) \it
Let $S$ be an excellent Dedekind scheme with field of rational functions $K.$ Let $C$ be a proper, geometrically reduced curve over $K.$ Let $G_K:=\Pic^0_{C/K}$ and suppose that $\mathrm{uni}(G_K)=0.$ Then $G_K$ admits a Néron model over $S.$ \label{ConjIIthm}
\end{theorem}
\begin{proof}
By Corollary \ref{Jacstructcor}, we may assume without loss of generality that $C=\widetilde{C}$ is regular. Hence the Theorem follows from Proposition \ref{regularnéronprop}.
\end{proof}\\
The proof of Conjecture I from \cite{BLR}, Chapter 10.3 for Jacobians of geometrically reduced curves is more difficult and will occupy the remainder of this Section. The idea we shall pursue will use a generalisation of the construction presented in Section 5 of \cite{Ov}. As in \it loc. cit., \rm we shall begin by constructing \it good models of singular curves. \rm We begin with the following 
\begin{lemma}
Let $S$ be a Dedekind scheme with field of rational functions $K.$ Let $C$ be a seminormal proper geometrically reduced curve over $K.$ Let $\Psi:=\widetilde{C}\times_CC^{\mathrm{sing}}.$ Let ${\overline{\Psi}}$ be the integral closure of $S$ in $\Psi.$ Then there exists a proper regular flat model $\widetilde{f}\colon \widetilde{\mathscr{C}}\to S$ of $\widetilde{C}$ together with a closed immersion \label{singimmlem}
$$\iota\colon {\overline{\Psi}}\to \widetilde{\mathscr{C}}.$$ 
\end{lemma}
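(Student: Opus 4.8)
The plan is to first produce a projective regular flat model of $\widetilde{C}$ by the method of Theorem \ref{regmodelexistencetheorem}, and then to modify it by a sequence of blow-ups supported in the special fibres so that the scheme-theoretic closure of $\Psi$ becomes regular and therefore coincides with $\widetilde{\Psi}$. Since all of the constructions below are compatible with the decomposition of $\widetilde{C}$ into its (finitely many) connected components, I may assume that $\widetilde{C}$ be integral.

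First I would record the structure of $\widetilde{\Psi}$. The scheme $\Psi=\widetilde{C}\times_C C^{\mathrm{sing}}$ is the scheme-theoretic fibre of the normalisation morphism over the finite reduced set $C^{\mathrm{sing}}$, and it is reduced: this is precisely the last assertion of Theorem \ref{FactorisationTheorem}, applied with $C$ seminormal so that $C=C^{\mathrm{sn}}$. Hence $\Psi=\coprod_i \Spec L_i$ for finite field extensions $L_i/K$, and $\widetilde{\Psi}$, being the integral closure of $S$ in $\Psi$, is the disjoint union of the integral closures of $S$ in the $L_i$. As $S$ is excellent, each of these is finite over $S$ and normal of dimension one, so $\widetilde{\Psi}$ is a regular scheme, finite over $S$, with generic fibre $\Psi$.

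Next I would choose a projective regular flat model $\widetilde{\mathscr{C}}_0\to S$ of $\widetilde{C}$; the construction in the proof of Theorem \ref{regmodelexistencetheorem} (projective closure followed by Lipman's desingularisation, which is a composition of finite morphisms and blow-ups of closed points, hence projective) produces one. Let $Z$ be the scheme-theoretic closure of $\Psi\subseteq \widetilde{C}\subseteq \widetilde{\mathscr{C}}_0$, equipped with its reduced structure. Then every component of $Z$ dominates $S$, so $Z\to S$ is quasi-finite and proper, hence finite; its generic fibre is $\Psi$ (passing to the closure does not alter the generic fibre, and $\Psi$ is already reduced), and its normalisation is $\widetilde{\Psi}$. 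Because $Z$ agrees with $\Psi$ over the generic point of $S$, all non-regular points of the curve $Z$ lie in the special fibres and are therefore finite in number.

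The crux is now to resolve $Z$ inside the regular surface $\widetilde{\mathscr{C}}_0$. By embedded resolution of curves on a regular excellent two-dimensional scheme (a finite sequence of blow-ups of closed points, cf. \cite{Lip}), there is a composition of blow-ups $\widetilde{\mathscr{C}}\to \widetilde{\mathscr{C}}_0$ whose centres are the non-regular points of the successive strict transforms of $Z$ --- all contained in the special fibres --- such that $\widetilde{\mathscr{C}}$ is regular and the strict transform $Z'$ of $Z$ is regular. Blowing up closed points preserves regularity, projectivity, and integrality (hence flatness over the Dedekind scheme $S$), and, the centres lying in the special fibres, leaves the generic fibre $\widetilde{C}$ unchanged; thus $\widetilde{\mathscr{C}}\to S$ is again a projective regular flat model of $\widetilde{C}$. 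Finally, $Z'\to Z$ is proper, birational, and quasi-finite, hence finite, with $Z'$ normal, so $Z'$ is the normalisation of $Z$, namely $\widetilde{\Psi}$; the inclusion $Z'\hookrightarrow \widetilde{\mathscr{C}}$ then furnishes the desired closed immersion $\iota\colon \widetilde{\Psi}\to \widetilde{\mathscr{C}}$. The one genuine difficulty is this embedded resolution step: everything else is formal, but one must ensure that the horizontal curve $Z$ can be made regular by blow-ups that keep the ambient model regular, projective, and flat while fixing the generic fibre.
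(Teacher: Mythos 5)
Your proof is correct and follows essentially the same route as the paper's: the paper likewise takes a regular projective flat model, considers the (reduced, horizontal) scheme-theoretic image of $\widetilde{\Psi}$ in it, applies embedded resolution (citing \cite{Liu}, Chapter 9.2, Theorem 2.26 and Remark 2.27, rather than \cite{Lip}) so that the strict transform becomes regular, and then identifies this regular strict transform with $\widetilde{\Psi}$ by finiteness, birationality, and normality. The only cosmetic difference is that the paper first extends $\Psi\to\widetilde{C}$ to a morphism $\widetilde{\Psi}\to\widetilde{\mathscr{C}}$ by properness and takes its image, whereas you take the closure of $\Psi$ and recognise $\widetilde{\Psi}$ as its normalisation.
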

\begin{proof}
Choose a proper, flat, and regular model $\widetilde{\mathscr{C}}'\to S$ of $\widetilde{C}$ as in Theorem \ref{regmodelexistencetheorem}. Note that ${\overline{\Psi}}$ is finite over $S$ because $\Psi$ is reduced (see Theorem \ref{FactorisationTheorem}(ii)) and $S$ is excellent. Because ${\overline{\Psi}}$ is a disjoint union of Dedekind schemes and because $\widetilde{f}$ is proper, we obtain a canonical morphism ${\overline{\Psi}}\to \widetilde{\mathscr{C}}'$ which extends $\Psi\to C.$ The scheme-theoretic image $D$ of ${\overline{\Psi}}\to \widetilde{\mathscr{C}}'$ is a reduced divisor on $\widetilde{C}.$ Moreover, $D$ is clearly excellent as a scheme. By the embedded resolution theorem (\cite{Liu}, Chapter 9.2, Theorem 2.26), we can find a proper birational morphism $\varphi\colon \widetilde{\mathscr{C}}\to \widetilde{\mathscr{C}}'$ such that $\varphi^\ast D$ has strict normal crossings. Then the strict transform of $\widetilde{D}$ of $D$ is regular (\cite{Liu}, Chapter 9.2, Remark 2.27), so the induced map ${\overline{\Psi}} \to \widetilde{D}$ is an isomorphism.
\end{proof}

\begin{lemma}
With the notation from the preceding lemma, Zariski locally on $S,$ the map $\iota$ factors through an open affine subscheme of $\widetilde{\mathscr{C}}.$ \label{Standinglem}
\end{lemma}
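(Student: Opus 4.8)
The plan is as follows. The assertion is local on $S,$ so fix a point $s\in S;$ it suffices to produce an open affine neighbourhood $U$ of $s$ and an affine open subscheme of $\widetilde{\mathscr{C}}\times_SU$ containing $\iota(\widetilde{\Psi}\times_SU).$ Replace $S$ by an affine open neighbourhood $U_0=\Spec R$ of $s,$ and let $\mathfrak{p}\subseteq R$ be the prime corresponding to $s,$ so that $\Og_{S,s}=R_{\mathfrak{p}}.$ Since $\widetilde{f}$ is projective, the base change $\widetilde{\mathscr{C}}_{U_0}:=\widetilde{\mathscr{C}}\times_SU_0$ is projective over the Noetherian affine scheme $U_0,$ and I fix an ample invertible sheaf $L$ on it (the pullback of $\Og(1)$ under a projective embedding). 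Write $W_0\subseteq \widetilde{\mathscr{C}}_{U_0}$ for the image of the closed immersion $\iota;$ it is a closed subscheme isomorphic to $\widetilde{\Psi}\times_SU_0,$ hence finite over $U_0.$ The strategy is to find a section of a large power of $L$ which is nowhere zero on $W_0$ over a neighbourhood of $s,$ since the non-vanishing locus of such a section is affine by ampleness and will contain the image of $\iota.$

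Two ingredients are needed. First, Serre vanishing: as $\widetilde{\mathscr{C}}_{U_0}$ is projective over a Noetherian affine scheme and $L$ is ample, we have $H^1(\widetilde{\mathscr{C}}_{U_0}, \mathcal{I}\otimes L^{\otimes n})=0$ for $n\gg 0,$ where $\mathcal{I}$ is the ideal sheaf of $W_0;$ hence the restriction $\Gamma(\widetilde{\mathscr{C}}_{U_0}, L^{\otimes n})\to \Gamma(W_0, L^{\otimes n}|_{W_0})$ is surjective for all sufficiently large $n.$ Fix such an $n.$ Second, the triviality of $L^{\otimes n}|_{W_0}$ near $s$: the scheme $W_0\times_{U_0}\Spec R_{\mathfrak{p}}$ is finite over the local ring $R_{\mathfrak{p}},$ hence semilocal, so that every invertible sheaf on it is trivial. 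I may therefore choose a nowhere-vanishing section of $L^{\otimes n}|_{W_0}$ over $R_{\mathfrak{p}}.$ Because $L^{\otimes n}|_{W_0}$ and $\Og_{W_0}$ are coherent and hence finitely generated over $R,$ such a trivialising section over $R_{\mathfrak{p}}$ is already defined and trivialising over $W_0\times_{U_0}D(g)$ for a suitable $g\in R\setminus \mathfrak{p}.$

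Now set $U:=D(g).$ Surjectivity of the restriction map persists over $U$ by flat base change, so I may lift the trivialising section to $\sigma\in \Gamma(\widetilde{\mathscr{C}}_U, L^{\otimes n}).$ By construction $\sigma$ vanishes nowhere on $W_0\times_{U_0}U,$ so the open subscheme $\{\sigma\neq 0\}\subseteq \widetilde{\mathscr{C}}_U$ contains $\iota(\widetilde{\Psi}\times_SU).$ Since $L$ is ample and $\widetilde{\mathscr{C}}_U$ is quasi-compact, $\{\sigma\neq 0\}$ is affine (\cite{Stacks}, Tag 01PS). This is the required affine open subscheme of $\widetilde{\mathscr{C}},$ and as $s$ was arbitrary the claim follows.

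The main point to watch is that $\widetilde{\Psi}$ is finite over $S$ rather than a finite set of points, so the classical fact that finitely many points of a quasi-projective scheme lie in a common affine open does not apply directly. The device that resolves this is the passage to the local ring $\Og_{S,s},$ over which $W_0$ becomes semilocal and the ample sheaf therefore trivialises; the resulting affine open then spreads out to a genuine Zariski neighbourhood of $s$ by the routine localisation argument above.
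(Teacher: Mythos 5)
Your proof is correct in substance, but it takes a genuinely different route from the paper's. Both arguments hinge on the same two facts---projectivity of $\widetilde{\mathscr{C}}\to S$ and finiteness of $\widetilde{\Psi}\to S$---and both use the same localise-and-spread-out device: work at a point $s\in S$ and then enlarge to a Zariski neighbourhood using that images of closed subsets of $\widetilde{\Psi}$ are closed in $S.$ Where you differ is in how the affine open is produced. The paper stays elementary: the fibre of $\widetilde{\Psi}$ over $s$ is a finite set of points of the quasi-projective scheme $\widetilde{\mathscr{C}},$ hence lies in a common affine open $V;$ the locus in $S$ over which $\iota$ fails to land in $V$ is closed and misses $s,$ so over a suitable affine neighbourhood $U$ of $s$ the map $\iota$ factors through $V_U=V\cap\widetilde{f}^{-1}(U),$ which is affine because $V\to S$ is an affine morphism ($V$ affine, $S$ separated). (So the classical ``finitely many points lie in a common affine open'' fact, which you say does not apply directly, is in fact applied---to the fibre over $s,$ after exactly the localisation you also perform.) You instead manufacture the affine open cohomologically, as the non-vanishing locus of a section of a large power of an ample sheaf, using Serre vanishing to lift a trivialising section from the image of $\iota$ and triviality of the Picard group of semilocal rings to produce that trivialising section near $s.$ Your route is heavier machinery but perfectly workable, and it has the small virtue of exhibiting the affine open as the complement of an ample divisor disjoint from $\iota(\widetilde{\Psi}\times_SU).$

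One justification does need repair. In the last step you claim that on a quasi-compact scheme with ample $\mathcal{L}$ every non-vanishing locus of a global section of $\mathcal{L}^{\otimes n}$ is affine, citing \cite{Stacks}, Tag 01PS. That tag is merely the definition of ampleness, and the claim in that generality is false: on $X=\mathbf{A}^2_\kappa\setminus\{0\}$ the structure sheaf $\Og_X$ is ample ($X$ being quasi-affine), yet for $\sigma=1$ the locus $X_\sigma=X$ is not affine. What you need---and what is true---is the statement for schemes proper over a Noetherian affine base: if $X\to\Spec R$ is proper, $\mathcal{L}$ is ample, and $\sigma\in\Gamma(X,\mathcal{L}^{\otimes n})$ with $n\geq1,$ then $X_\sigma$ is affine; for instance, the canonical morphism $X\to\Proj\bigoplus_{m\geq0}\Gamma(X,\mathcal{L}^{\otimes m})$ is then an isomorphism identifying $X_\sigma$ with the basic affine open $D_+(\sigma).$ Since your $\widetilde{\mathscr{C}}_U$ is projective over the affine Noetherian scheme $U,$ this applies verbatim, and with that correction your argument is complete.
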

\begin{proof}
We may assume without loss of generality that the map $\widetilde{\mathscr{C}}\to S$ is projective (\cite{Liu}, Chapter 8.3, Theorem 3.16). Let $s\in S$ be a  point. Because ${\overline{\Psi}}$ is finite over $S,$ we can find an open affine subset $V$ of $\widetilde{\mathscr{C}}$ which contains the fibre of ${\overline{\Psi}}\to S$ above $s.$ Let $Z$ be the complement of $V\cap {\overline{\Psi}}$ in ${\overline{\Psi}}.$ Because the morphism ${\overline{\Psi}}\to S$ is finite, the image of $Z$ in $S$ is closed in $S.$ Let $U$ be the complement of that image. Now we replace $U$ by an open affine neighbourhood of $s$ in $U.$ Then $U$ has the desired property. Indeed, the morphism $V\to S$ is affine, so the preimage $V_U$ of $U$ in $V$ is affine, and it is easily verified that $\iota$ factors through $V_U$ above $U.$ 
\end{proof}

\begin{corollary}
Let $C$ be a proper connected geometrically reduced seminormal curve over $K.$ Suppose that every irreducible component of $C$ admits a $K$-rational point smooth on $C.$ Then there exists a flat proper model $f\colon \mathscr{C}\to S$ of $C$ which is cohomologically flat in dimension zero. \label{Cexistcor}
\end{corollary}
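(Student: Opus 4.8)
The plan is to realise the desired model $\mathscr{C}$ as a relative push-out of a regular model of the normalisation $\widetilde{C}$ along finite models of $\Psi$ and $C^{\mathrm{sing}}$, and then to invoke Lemma~\ref{cohomflatlem} to propagate cohomological flatness from the regular model to $\mathscr{C}$. Since $C$ is seminormal, part (ii) of the Factorisation Theorem~\ref{FactorisationTheorem} exhibits $C$ as the push-out $\widetilde{C}\cup_{\Psi}C^{\mathrm{sing}}$, where $\Psi=\widetilde{C}\times_{C}C^{\mathrm{sing}}$ is reduced, the closed immersion is $\Psi\to\widetilde{C}$, and the glueing map $\Psi\to C^{\mathrm{sing}}$ is finite and flat (being a finite morphism over the field $K$), hence faithfully flat onto the reduced scheme $C^{\mathrm{sing}}$. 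Let $\widetilde{\Psi}$ and $T$ denote the integral closures of $S$ in $\Psi$ and in $C^{\mathrm{sing}}$; both are finite over $S$ because these $K$-algebras are reduced and $S$ is excellent, and $\Psi\to C^{\mathrm{sing}}$ induces a finite map $\widetilde{\Psi}\to T$ of disjoint unions of Dedekind schemes. Using Lemma~\ref{singimmlem} I would fix a projective regular flat model $\widetilde{\mathscr{C}}\to S$ of $\widetilde{C}$ together with the closed immersion $\iota\colon\widetilde{\Psi}\to\widetilde{\mathscr{C}}$, and then set $\mathscr{C}:=\widetilde{\mathscr{C}}\cup_{\widetilde{\Psi}}T$.

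Next I would check that this push-out exists and has the right formal properties. The local factorisation hypothesis of Proposition~\ref{Schwede} is exactly the Lemma following Lemma~\ref{singimmlem}, and conditions (i)--(iii) of Proposition~\ref{pushfiniteProp} hold at once ($S$ is Noetherian, $\widetilde{\mathscr{C}}\to S$ and $T\to S$ are of finite type, and $\widetilde{\Psi}\to T$ is finite); hence $\mathscr{C}$ is proper over $S$. To apply Proposition~\ref{Basechangeprop} I must verify (iv) and (v) for $\widetilde{\Psi}\to T$. Faithful flatness (iv) follows because a finite morphism of Dedekind schemes that is dominant on every component is flat, and its image, being open and closed and containing every generic point of $T$, is all of $T$. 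For (v) I would argue that the cokernel $Q$ of $t_{\ast}\Og_{T}\to z_{\ast}\Og_{\widetilde{\Psi}}$ is locally free over $\Og_{S}$: both terms are finite flat, hence locally free, over the Dedekind base, and working over a local ring $R=\Og_{S,s}$ the quotient $Q$ is torsion-free by a saturation argument, namely if $x$ in the normalisation satisfies $\pi^{k}x=\phi(y)$ with $y$ integral over $R$, then $x=\phi(y/\pi^{k})$ and, $\phi$ being an injective ring homomorphism, $y/\pi^{k}$ satisfies the same monic equation as $x$ and is therefore integral, so $x$ already lies in the image. Then, affine-locally over $S=\Spec R$, the push-out description provides a short exact sequence $0\to\Og_{\mathscr{C}}\to\Og_{\widetilde{\mathscr{C}}}\to Q\to 0$ of $R$-modules as in the proof of Proposition~\ref{Basechangeprop}, with $\Og_{\widetilde{\mathscr{C}}}$ and $Q$ flat over $R$, whence $\Og_{\mathscr{C}}$ is flat; and since push-out commutes with the base change $\Spec K\to S$ (Proposition~\ref{Basechangeprop}), the generic fibre of $\mathscr{C}$ is $\widetilde{C}\cup_{\Psi}C^{\mathrm{sing}}=C$, so $\mathscr{C}$ is indeed a flat proper model of $C$.

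Finally I would establish cohomological flatness via Lemma~\ref{cohomflatlem}, and this is where the main work, and the hypothesis on rational points, enters. The assumption that each irreducible component of $C$ carries a smooth $K$-rational point lifts, via the isomorphism $\widetilde{C}\setminus\Psi\xrightarrow{\sim}C\setminus C^{\mathrm{sing}}$, to a $K$-rational point on each component $\widetilde{C}_{i}$; a regular proper curve with a smooth $K$-point is geometrically integral, so $\Gamma(\widetilde{C},\Og_{\widetilde{C}})$ is a finite \'etale $K$-algebra. Extending these points to sections of $\widetilde{\mathscr{C}}\to S$ by properness and using that, $\widetilde{\mathscr{C}}$ being regular, each section meets the smooth locus, every special fibre acquires a component of geometric multiplicity one; by \cite{Liu}, Chapter 9.1, Corollary 1.24 and Remark 1.25, together with Chapter 8.3, Theorem 3.16, this makes $\widetilde{\mathscr{C}}\to S$ cohomologically flat in dimension zero, and the same component-by-component analysis shows $\widetilde{f}_{\ast}\Og_{\widetilde{\mathscr{C}}}$ is \'etale over $S$. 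Moreover $\Gamma(C,\Og_{C})=K$ (each component being geometrically connected), and flatness forces $f'_{\ast}\Og_{\mathscr{C}}$ to be a finite torsion-free $\Og_{S}$-subalgebra of $K$, hence equal to $\Og_{S}$ by integral closedness. All hypotheses of Lemma~\ref{cohomflatlem} are thereby in place, and it yields that $f'\colon\mathscr{C}\to S$ is cohomologically flat in dimension zero. The principal obstacle is precisely this verification of the hypotheses of Lemma~\ref{cohomflatlem}, in particular the \'etale and cohomological-flatness properties of the regular model $\widetilde{\mathscr{C}}$, which is exactly where the smooth-rational-point assumption is indispensable, while conditions (iv)--(v) of Proposition~\ref{Basechangeprop} constitute the secondary technical point.
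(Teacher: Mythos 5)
Your proposal is correct and follows essentially the same route as the paper: the same push-out construction $\mathscr{C}=\widetilde{\mathscr{C}}\cup_{\widetilde{\Psi}}\mathscr{C}^{\mathrm{sing}}$ built from Lemma~\ref{singimmlem} and the lemma following it, the same verification of conditions (i)--(v) of Propositions~\ref{pushfiniteProp} and~\ref{Basechangeprop} (including the integrality/saturation argument for torsion-freeness of the cokernel), and the same application of Lemma~\ref{cohomflatlem} via smooth $K$-points yielding sections through the smooth locus of the regular model. The only deviations are cosmetic: you deduce flatness of $\mathscr{C}$ from the split exact sequence rather than from the fact that all generic points lie over the generic point of $S$, and you make explicit the identification of the generic fibre with $C$ via Theorem~\ref{FactorisationTheorem}(ii), which the paper leaves implicit.
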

\begin{proof}
Let $\widetilde{C}$ denote the normalisation of $C$ and let $\widetilde{f}\colon \widetilde{\mathscr{C}}\to S$ be the proper regular model of $\widetilde{C}$ together with the closed immersion ${\overline{\Psi}}\to \widetilde{\mathscr{C}}$ from the Lemma \ref{singimmlem}. Moreover, let $\mathscr{C}^{\mathrm{sing}}$ be the integral closure of $S$ in $C^{\mathrm{sing}}.$ Observe that we have a canonical morphism ${\overline{\Psi}}\to \mathscr{C}^{\mathrm{sing}}.$
We would like to construct a model of $C$ by defining
$$\mathscr{C}:=\widetilde{\mathscr{C}}\cup_{{\overline{\Psi}}}\mathscr{C}^{\mathrm{sing}},$$ so we must check the standing assumption as well as conditions (i),...,(v) from Paragraph \ref{pushoputpara}. Conditions (i), (ii), and (iii) are clearly satisfied, and the standing assumption from that Paragraph follows from Lemma \ref{Standinglem}. In particular, $\mathscr{C}$ is proper over $S.$ Moreover $\mathscr{C}$ is flat over $S$ by Proposition \ref{Basechangeprop}(b). Now observe that the map ${\overline{\Psi}}\to \mathscr{C}^{\mathrm{sing}}$ is flat because $\mathscr{C}^{\mathrm{sing}}$ is a disjoint union of Dedekind schemes and all generic points of ${\overline{\Psi}}$ map to generic points of $\mathscr{C}^{\mathrm{sing}}.$ Since ${\overline{\Psi}}\to \mathscr{C}^{\mathrm{sing}}$ is finite and generically surjective, we deduce that this map is faithfully flat (showing Assumption (iv)). Now write $z$ and $t$ for the maps ${\overline{\Psi}}\to S$ and $\mathscr{C}^{\mathrm{sing}}\to S,$ respectively. To verify assumption (v), we must check that the quotient of $t_\ast \Og_{\mathscr{C}^{\mathrm{sing}}}\to z_\ast \Og_{{\overline{\Psi}}}$ is locally free over $S.$ This follows from the following\\
\\
$\mathbf{Sublemma}.$ \it Let $R$ be a Dedekind domain with field of fractions $K.$ Let $K\subseteq L \subseteq F$ be finite reduced $K$-algebras, and let $B\subseteq F$ be a finite $R$-submodule. Moreover, let $A:= B\cap L.$ Then $B/A$ is finite and locally free over $R.$ \rm \\
\\
The proof of the Sublemma will be left to the reader. We have now seen that the standing assumption as well as assumption (i),...,(v) from Paragraph \ref{pushoputpara} are satisfied. Because $C$ is geometrically reduced and geometrically connected (as it has a $K$-point), we find that $\Gamma(C, \Og_C)=K,$ which implies that $f_\ast \Og_{\mathscr{C}}=\Og_S$ because $f_\ast \Og_{\mathscr{C}}$ is finite and flat over $\Og_S.$  Moreover, since all irreducible components of $C$ have smooth (and hence regular) $K$-points, the same is true for $\widetilde{C}.$ In particular, we have
$\widetilde{f}_\ast \Og_{\widetilde{\mathscr{C}}}\cong \Og_S^n$ as an $\Og_S$-algebra for some $n\in \N,$ and this remains true after any base change (see the proof of Proposition \ref{regularnéronprop}). This implies that $\widetilde{f}_\ast \Og_{\widetilde{\mathscr{C}}}$ is étale over $\Og_S$, and Lemma \ref{cohomflatlem} tells us that $\mathscr{C}$ is cohomologically flat in dimension zero over $S,$ which concludes the proof. 
\end{proof}\\
$\mathbf{Remark}.$ The notation $\mathscr{C}^{\mathrm{sing}}$ makes sense because the closed immersion $\mathscr{C}^{\mathrm{sing}}\to \mathscr{C}$ identifies $\mathscr{C}^{\mathrm{sing}}$ with the singular locus of $\mathscr{C}.$

\begin{corollary}
Let $C$ be as in Corollary \ref{Cexistcor} and let $\mathscr{C}$ be the model of $C$ constructed there. Put $S':=\boldsymbol{\Spec}\,\widetilde{f}_\ast \Og_{\widetilde{\mathscr{C}}}.$ Then we have an exact sequence 
\begin{align}
0&\to (\Res_{S'/S}{\mathbf{G}_{\mathrm{m}, S'}})/{\mathbf{G}_{\mathrm{m}, S}} \overset{j}{\to} (\Res_{{\overline{\Psi}}/S}{\mathbf{G}_{\mathrm{m}, {\overline{\Psi}}}})/(\Res_{\mathscr{C}^{\mathrm{sing}}/S}{\mathbf{G}_{\mathrm{m}, \mathscr{C}^{\mathrm{sing}}}}) \nonumber\\ &\to \Pic_{\mathscr{C}/S}\overset{\pi}{\to} \Pic_{\widetilde{\mathscr{C}}/S}\to 0 \label{exactsequenceII}
\end{align}
of (not necessarily separated) group objects in the category of algebraic spaces over $S$ in the étale topology.
\end{corollary}
\begin{proof}
This is a consequence of Proposition \ref{exactsequenceProp}. Indeed, because $\mathscr{C}$ and $\widetilde{\mathscr{C}}$ are proper and flat, their étale and fppf-Picard functors are isomorphic (Proposition \ref{Picisomprop}), so both of them are representable by Theorem \ref{representthm}. We must also show that the standing assumption and assumptions (i),...,(v) from Paragraph \ref{pushoputpara} are satisfied, which we have already verified in the proof of Corollary \ref{Cexistcor}. Let $z\colon {\overline{\Psi}}\to S$ and $t\colon \mathscr{C}^{\mathrm{sing}}\to S$ denote the structure morphisms. All we must verify is that $t_\ast((\Res_{{\overline{\Psi}}/\mathscr{C}^{\mathrm{sing}}}{\mathbf{G}_{\mathrm{m}, {\overline{\Psi}}}})/{\mathbf{G}_{\mathrm{m}, \mathscr{C}^{\mathrm{sing}}}})$ is isomorphic to $(\Res_{{\overline{\Psi}}/S}{\mathbf{G}_{\mathrm{m}, {\overline{\Psi}}}})/(\Res_{\mathscr{C}^{\mathrm{sing}}/S}{\mathbf{G}_{\mathrm{m}, \mathscr{C}^{\mathrm{sing}}}}).$ This is a consequence of \cite{CGP}, Corollary A.5.4 (3) because $t$ is a finite flat morphism between Noetherian schemes.
\end{proof}\\
We shall now construct a Néron lft-model of $\Pic^0_{C/K},$ proceeding in a way similar to \cite{Ov}, Proposition 6.0.2 and Theorem 6.0.6. Let us write
$$\mathscr{K}_1:=\ker (\Pic_{\mathscr{C}/S} \overset{\pi}{\to} \Pic_{\widetilde{\mathscr{C}}}).$$ with the notation from the exact sequence (1). We begin with the following technical
\begin{lemma}
Keep the notation and assumptions from above. Then there exists an open dense subset of $S$ above which $\mathscr{K}_1,$ $\Pic_{\mathscr{C}/S},$ and $\Pic_{\widetilde{\mathscr{C}}/S}$ are all separated. Moreover, $\mathscr{K}_1$ is a smooth algebraic space over $S.$ \label{K1seperatedlem}
\end{lemma}
\begin{proof}
It suffices to exhibit such an open subset for each of the above algebraic spaces individually, since their intersection will then have the desired property. Moreover, once we have found such open subsets for $\mathscr{K}_1$ and $\Pic_{\widetilde{\mathscr{C}}/S},$ the same will do for $\Pic_{\mathscr{C}/S}.$ First observe that we have
$$\mathscr{K}_1=\mathrm{coker}\, j$$
with the notation from (1). Note that $\mathscr{K}_1$ is representable by an algebraic space. Because $j$ is generically a closed immersion (\cite{Stacks}, Tag 047T), the same is true for the restriction of $j$ to some dense open subset of $S$ (\cite{Stacks}, Tag 01ZP). As for $\Pic_{\widetilde{\mathscr{C}}/S},$ we reduce to the case where $\widetilde{\mathscr{C}}$ is integral, as it is the disjoint union of finitely many integral schemes. Hence we know that, above some dense open subset $V$ of $S,$ $\widetilde{\mathscr{C}}$ has geometrically integral fibres. Therefore $\Pic_{\widetilde{\mathscr{C}}/S}$ is separated above $V.$ Indeed, by \cite{Liu}, Chapter 8.3, Theorem 3.16, $\widetilde{\mathscr{C}}$ is projective over $V$ (after possibly shrinking $V$). Hence the claim follows from \cite{BLR}, Chapter 8.2, Theorem 1. Finally, note that $\mathscr{K}_1$ is a quotient of a smooth algebraic space and hence itself smooth. This proves our claim.
\end{proof}\\
Now let $C$ be a geometrically reduced proper curve over $K.$ Assume that every irreducible component of $C$ admits a smooth $K$-point. Let $C_1,..., C_n$ denote the irreducible components of $C,$ which are geometrically integral over $K.$ Let $\mathscr{C}$ be the model of $C$ constructed in Corollary \ref{Cexistcor}. By construction, the scheme $\mathscr{C}$ has reduced irreducible components $\mathscr{C}_j,$ such that, for all $j=1,..., n,$ $\mathscr{C}_j$ is a proper and flat model of $C_j$ over $S.$ 
We define a morphism 
$$\deg\colon \Pic_{\mathscr{C}/S}\to \underline{\Z}^n$$ as follows (where $\underline{\Z}$ denotes the sheafification of the constant presheaf $\Z$): for each morphism $T\to S,$ we have a map 
\begin{align*}
\Pic(\mathscr{C}\times_ST)\to \underline{\Z}^n(T)
\end{align*}
coming from the fact that for each $\mathscr{L}\in \Pic(\mathscr{C}\times_ST)$ and each $j=1,..., n,$ the map $T\to \Z,$ $t\mapsto \deg \mathscr{L}\mid_{\mathscr{C}_{j,t}}$ is locally constant (\cite{BLR}, Chapter 9.1, Proposition 2). We define a map
$$\widetilde{\deg}\colon \Pic_{\widetilde{\mathscr{C}}/S}\to \Z^n$$ entirely analogously. Because for all $j=1,..., n,$ the map $\widetilde{\mathscr{C}}_j\to \mathscr{C}_j$ is the normalisation morphism, it follows from the calculation in \cite{BLR}, p. 237f that we have 
$$\deg=\widetilde{\deg}\circ \pi$$ generically, so this equality follows everywhere because $\underline{\Z}^n$ is separated over $S.$ Define
$$P_{\mathscr{C}/S}:=\ker \deg$$
and 
$$P_{\widetilde{\mathscr{C}}/S}:=\ker \widetilde{\deg}.$$ We immediately obtain an exact sequence
$$0\to \mathscr{K}_1\to P_{\mathscr{C}/S}\to P_{\widetilde{\mathscr{C}}/S}\to 0$$ in the étale topology over $S.$ By \cite{BLR}, Chapter 9.3, Corollary 14, $P_{\mathscr{C}/S}$ and $P_{\widetilde{\mathscr{C}}/S}$ are models of $\Pic^0_{C/K}$ and $\Pic^0_{\widetilde{C}/K},$ respectively. We have now assembled all the technical tools needed to give a positive answer to Conjecture I of \cite{BLR}, Chapter 10.3, for Jacobians of geometrically reduced curves:
\begin{theorem} \rm (Theorem \ref{ConjIthmInt}) \it
Let $S$ be an excellent Dedekind scheme with field of rational functions $K.$ Let $C$ be a proper geometrically reduced curve over $K.$ Let $G_K:=\Pic^0_{C/K}$ and assume that $\mathscr{R}_{us, K}(G_K)=0$ (or, equivalently, that $C$ is seminormal; see Corollary \ref{Cexistcor}.). Then $G_K$ admits a Néron lft-model over $S.$ \label{ConjIthm}
\end{theorem}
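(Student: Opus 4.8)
The plan is to deduce the theorem from the structural results on $G_K=\Pic^0_{C/K}$ together with the existence results for N\'eron lft-models already assembled, after a sequence of reductions. By Corollary \ref{Jacstructcor} the hypothesis $\mathscr{R}_{us,K}(G_K)=0$ is equivalent to $C$ being seminormal, so I would first assume $C=C^{\mathrm{sn}}$. Next I would reduce to the case in which $C$ is connected with geometrically integral components, each carrying a smooth $K$-rational point: connectedness costs nothing, since N\'eron lft-models commute with finite products, and the rational points are obtained after a finite separable extension $L/K$, replacing $S$ by its (still excellent) integral closure $S_L$. Crucially, the hypothesis survives this base change because seminormality does, by Lemma \ref{snbasechangelem} together with \cite{CGP}, Corollary B.3.5; and the existence of a N\'eron lft-model descends from $S_L$ back to $S$ along the finite separable extension, via Weil restriction and \cite{BLR}, Chapter 7.2, Proposition 4.

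Having made these reductions, I would invoke the cohomologically flat model $f\colon\mathscr{C}\to S$ of Corollary \ref{Cexistcor} and the two exact sequences it produces: the fundamental sequence (\ref{exactsequenceII}) and its restriction to degree zero, $0\to\mathscr{K}_1\to P_{\mathscr{C}/S}\to P_{\widetilde{\mathscr{C}}/S}\to 0$, of smooth group algebraic spaces over $S$ in the \'etale topology. The candidate N\'eron lft-model is the maximal separated quotient $P_{\mathscr{C}/S}^{\mathrm{sep}}$, which is a smooth separated $S$-group scheme by the Lemma preceding Proposition \ref{regularn\'eronprop}; its generic fibre is $G_K$ by Theorem \ref{Jacstructthm}. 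The plan is to realise $P_{\mathscr{C}/S}^{\mathrm{sep}}$ as the middle term of an exact sequence $0\to\mathscr{N}'\to P_{\mathscr{C}/S}^{\mathrm{sep}}\to P_{\widetilde{\mathscr{C}}/S}^{\mathrm{sep}}\to 0$ whose quotient term, by (the proof of) Proposition \ref{regularn\'eronprop}, is the N\'eron model of the abelian variety $\Pic^0_{\widetilde{C}/K}$, and then to apply Corollary \ref{exactsequenceN\'eroncor}.

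The remaining point is to identify the kernel $\mathscr{N}'=\ker(P_{\mathscr{C}/S}^{\mathrm{sep}}\to P_{\widetilde{\mathscr{C}}/S}^{\mathrm{sep}})$ with the N\'eron lft-model of $\mathrm{uni}(G_K)$, the generic fibre of $\mathscr{K}_1$. On the generic fibre, Theorem \ref{Jacstructthm} presents $\mathrm{uni}(G_K)$ as the cokernel of a split torus $\Gm^{n-1}$ inside $M:=(\Res_{\Psi/K}\Gm)/(\Res_{C^{\mathrm{sing}}/K}\Gm)$; by Lemma \ref{unin\'eronexistlem} the group $M$ admits a N\'eron lft-model over $S$, and iterating Lemma \ref{Gmclimprop} along the split subtorus shows that $\mathrm{uni}(G_K)$ does as well. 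I would then match this N\'eron lft-model with $\mathscr{N}'$ by comparing $\mathscr{N}'$ to the finite-type cokernel $\mathscr{K}_1$ of (\ref{exactsequenceII}) through the Weil-restriction models $\Res_{\widetilde{\Psi}/S}\Gm$ and $\Res_{\mathscr{C}^{\mathrm{sing}}/S}\Gm$, and by verifying the criterion of Proposition \ref{strongcritprop}. With both outer terms recognised as N\'eron lft-models, Corollary \ref{exactsequenceN\'eroncor} then shows that $P_{\mathscr{C}/S}^{\mathrm{sep}}$ is the N\'eron lft-model of $G_K$.

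I expect the principal obstacle to be precisely this last identification. The functor $(-)^{\mathrm{sep}}$ does not commute with the formation of kernels and cokernels, so $\mathscr{N}'$ need not coincide with the naive separated quotient $\mathscr{K}_1^{\mathrm{sep}}$: passing from $P_{\mathscr{C}/S}$ to $P_{\mathscr{C}/S}^{\mathrm{sep}}$ collapses the scheme-theoretic closure of the unit section and thereby alters the component structure along the closed fibres, and it is exactly this interaction that must reproduce the (in general non-finite-type) component group of the N\'eron lft-model of the torus-like group $\mathrm{uni}(G_K)$. Controlling this interaction, and checking that the induced sequence of maximal separated quotients remains exact over all of $S$, is where the genuine work lies; the reductions of the first paragraph and the concluding appeal to Corollary \ref{exactsequenceN\'eroncor} are by comparison formal.
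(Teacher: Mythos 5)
Your reductions (seminormality via Corollary \ref{Jacstructcor}, passage to a finite separable extension so that every irreducible component acquires a smooth rational point, descent of the N\'eron lft-model back along that extension), your choice of the model $\mathscr{C}\to S$ from Corollary \ref{Cexistcor}, your construction of a N\'eron lft-model $\mathscr{U}$ of $\mathrm{uni}(G_K)$ out of Weil restrictions of $\Gm$ and Proposition \ref{Gmclimprop}, and your concluding appeal to the corollary on extensions of N\'eron lft-models all coincide with the paper's proof. The gap is the step you yourself single out, and it is not merely hard: the identification you propose is false. You take $P_{\mathscr{C}/S}^{\mathrm{sep}}$ as the candidate N\'eron lft-model and plan to prove that $\mathscr{N}'=\ker(P_{\mathscr{C}/S}^{\mathrm{sep}}\to P_{\widetilde{\mathscr{C}}/S}^{\mathrm{sep}})$ is the N\'eron lft-model of $\mathrm{uni}(G_K)$. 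The paper's own example (the nodal curve $y^2=x^2+xz$, for which $G_K\cong\Gm$, so the hypothesis $\mathscr{R}_{us,K}(G_K)=0$ holds) refutes this: the model $\mathscr{C}=\Ps^1_R\cup_{\Spec R\sqcup\Spec R}\Spec R$ is exactly of the form produced by Corollary \ref{Cexistcor}, yet $P_{\mathscr{C}/\Spec R}\cong \Gm$ is already separated, so $\mathscr{N}'=\mathscr{K}_2\cong\Gm$, which is not the N\'eron lft-model of $\Gm$ (that model has component group $\Z$ over the closed point); equivalently, $P^{\mathrm{sep}}_{\mathscr{C}/\Spec R}$ is not the N\'eron lft-model of $G_K$. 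Worse, over a global base no choice of model can rescue your candidate: the Proposition in the final section of the paper shows that a geometrically integral curve admitting a N\'eron--Picard model (i.e., one with $P^{\mathrm{sep}}$ equal to the N\'eron lft-model) over a global $S$ is necessarily regular, whereas Theorem \ref{ConjIthm} must cover singular seminormal curves over such $S$. Only in the local case, and only for the finer models of Theorem \ref{semfactexthm} built using condition (ii) of Proposition \ref{strongembresprop} --- a condition the model of Corollary \ref{Cexistcor} need not satisfy --- does $P^{\mathrm{sep}}$ itself become the N\'eron lft-model.

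The missing idea is how the paper sidesteps precisely this obstruction. One first shows, via a snake-lemma analysis of the diagram relating $\mathscr{K}_1$, $P_{\mathscr{C}/S}$, $P_{\widetilde{\mathscr{C}}/S}$ and their maximal separated quotients, that $\mathscr{K}_2$ is a smooth \emph{separated} group scheme over $S$. This makes the N\'eron mapping property applicable to it: there is a unique morphism $\beta\colon \mathscr{K}_2\to\mathscr{U}$ restricting to the identity on generic fibres. One then \emph{replaces} the kernel rather than identifying it, forming the push-out $\mathscr{U}\oplus_{\mathscr{K}_2}P_{\mathscr{C}/S}^{\mathrm{sep}}$ in the category of fppf-sheaves, i.e., the cokernel of $x\mapsto(-\beta(x),\alpha(x))$, where $\alpha\colon\mathscr{K}_2\to P^{\mathrm{sep}}_{\mathscr{C}/S}$ is the inclusion; this cokernel is representable by a scheme by \cite{An}, Th\'eor\`eme 4.C, because $\alpha$ is a closed immersion and $\mathscr{K}_2$ is flat over $S.$ The resulting smooth separated model of $G_K$ sits in an exact sequence with $\mathscr{U}$ as kernel and $P^{\mathrm{sep}}_{\widetilde{\mathscr{C}}/S}$ as quotient, both of which are N\'eron lft-models, and only then does the extension corollary finish the proof. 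So the N\'eron lft-model produced by the theorem is the push-out $\mathscr{U}\oplus_{\mathscr{K}_2}P_{\mathscr{C}/S}^{\mathrm{sep}}$, not $P_{\mathscr{C}/S}^{\mathrm{sep}}$; without this step your argument cannot be completed. (A minor further point: $\Pic^0_{\widetilde{C}/K}$ need not be an abelian variety over the non-perfect field $K$, since $\widetilde{C}$ is regular but possibly non-smooth; what is used is only that $P^{\mathrm{sep}}_{\widetilde{\mathscr{C}}/S}$ is its N\'eron model.)
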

\begin{proof}
The proof is based on \cite{Ov}, Proposition 6.0.4 and Theorem 6.0.6, and will be divided into several steps. Note first that we have an exact sequence
\begin{align}0 \to \mathrm{uni}_K(\Pic^0_{C/K} ) \to \Pic^0_{C/K} \to \Pic^0_{\widetilde{C}/K}\to 0\label{310seq}\end{align} by Theorem \ref{Jacstructthm}. \\
\it Step 1: \rm Without loss of generality, we may suppose that $C$ is connected, because $\Pic^0_{-/K}$ transforms disjoint unions into products. Moreover, we may assume that each irreducible component of $C$ has a $K$-rational point smooth on $C$. Indeed, because $C$ is geometrically reduced, it contains a smooth dense open subscheme. In particular, after replacing $K$ by a siutable finite \it separable \rm extension, each irreducible component of $C$ acquires a smooth rational point. We replace $S$ by its integral closure in the separable extension we chose, which is still an excellent Dedekind scheme. This does not lead to any loss of generality by \cite{BLR}, Chapter 10.1, Proposition 4. Moreover, we still have $\mathscr{R}_{us, K}(G_K)=0$ by \cite{CGP}, Corollary B.3.5, and by Corollary \ref{Jacstructcor}, $C$ remains seminormal after the base change. Moreover, sequence (\ref{310seq}) commutes with base change along finite separable extensions. \\
\it Step 2: \rm We shall show that that the sequence (\ref{310seq}) can be extended to an exact sequence 
\begin{align}0\to \mathscr{U} \to \mathscr{N} \to \widetilde{\mathscr{N}}\to 0\label{310seq2}\end{align} of group schemes over $S$ such that $\mathscr{U}$ and $\widetilde{\mathscr{N}}$ are the Néron lft-models of $\mathrm{uni}_K(\Pic^0_{C/K})$ and $\Pic^0_{\widetilde{C}/K},$ respectively. Once this is achieved, the proof is concluded by Corollary \ref{exactsequenceNéroncor}. \\
\it Step 3: \rm In order to construct sequence (\ref{310seq2}), it is sufficient to construct an exact sequence 
\begin{align}0 \to \mathscr{U}'\to \mathscr{N}'\to \widetilde{\mathscr{N}} \to 0 \label{310seq3}\end{align} of $S$-group schemes extending (\ref{310seq}) such that $\widetilde{\mathscr{N}}$ is the Néron lft-model of $\Pic^0_{\widetilde{C}/K}$ and such that $\mathscr{U}'$ is smooth over $S.$ To see this, note first that $\mathrm{uni}_K(\Pic^0_{C/K})$ admits a Néron lft-model $\mathscr{U}$ over $S.$ Indeed, with the notation from the proof of Theorem \ref{Jacstructthm}, we have seen that $\mathrm{uni}(\Pic^0_{C/K})$ is equal to the quotient of $(\Res_{\Psi/K}{\mathbf{G}_{\mathrm{m}, \Psi}})/(\Res_{C^{\mathrm{sing}}/K}{\mathbf{G}_{\mathrm{m}, C ^ \mathrm{sing}}})$ by the torus $(\Res_{\Gamma(\widetilde{C}, \Og_{\widetilde{C}})/K}{\mathbf{G}_{\mathrm{m}, \Gamma(\widetilde{C}, \Og_{\widetilde{C}})}})/{\mathbf{G}_{\mathrm{m}, K}}$ (recall that $C$ is seminormal, so $\nu=\widetilde{\varsigma}$). However, because each irreducible component of $\widetilde{C}$ has a $K$-point, the second torus is split. Therefore the existence of $\mathscr{U}$ follows from Lemma \ref{uninéronexistlem} and Proposition \ref{Gmclimprop}. By the Néron mapping property, there is a unique morphism $\mathscr{U}'\to \mathscr{U}$ extending the identity at the generic fibre. Consider the push-out diagram
$$\begin{CD}
0 @>>> \mathscr{U}' @>{\alpha}>> \mathscr{N}' @>>> \widetilde{\mathscr{N}} @>>>0 \\
&& @V{\beta}VV@VVV@VVV\\
0 @>>> \mathscr{U} @>>> \mathscr{U}\oplus_{\mathscr{U}'} \mathscr{N}' @>>> \widetilde{\mathscr{N}}@>>> 0
\end{CD}$$
in the category of fppf-sheaves over $S.$ Then $\mathscr{U}\oplus_{\mathscr{U}'} \mathscr{N}'$ is isomorphic to the cokernel of the morphism $\mathscr{U}' \to \mathscr{U} \oplus \mathscr{N}'$ given by $x\mapsto (-\beta(x), \alpha(x)),$ which is a closed immersion because so is $\alpha$ and $\mathscr{U} \oplus \mathscr{N}'$ is separated. Therefore, putting $\mathscr{N}:=\mathscr{U}\oplus_{\mathscr{U}'} \mathscr{N}',$ the sheaf $\mathscr{N}$ is representable by \cite{An}, Théorème 4.C and we obtain sequence (\ref{310seq2}). \\
\it Step 4: \rm We shall now construct sequence (\ref{310seq3}). Let $\widetilde{\mathscr{C}}\to S$ be the proper flat model of $\widetilde{C}$ from Theorem \ref{regmodelexistencetheorem}, and let $\mathscr{C}$ be the proper flat model of $C$ constructed in Corollary \ref{Cexistcor}. 
Let $\mathscr{K}_2$ be the kernel of the induced map 
$P_{\mathscr{C}/S}^{\mathrm{sep}}\to P_{\widetilde{\mathscr{C}}/S}^{\mathrm{sep}}.$ (See Lemma \ref{EetalelemII} for the notation used here.) By \cite{An}, Théorème 4.B, both $P_{\mathscr{C}/S}^{\mathrm{sep}}$ and $P_{\widetilde{\mathscr{C}}/S}^{\mathrm{sep}}$ are schemes; hence so is $\mathscr{K}_2.$ We obtain a commutative diagram
\begin{align}\begin{CD}
0@>>>\mathscr{K}_1@>>> P_{\mathscr{C}/S}@>>>P_{\widetilde{\mathscr{C}}/S}@>>>0\\
&&@VVV@VVV@VVV\\
0@>>>\mathscr{K}_2@>>>P_{\mathscr{C}/S}^{\mathrm{sep}}@>>>P_{\widetilde{\mathscr{C}}/S}^{\mathrm{sep}}@>>>0. \label{PsepCD}
\end{CD}\end{align}
with exact rows in the category of fppf-sheaves on $S.$ Now we claim that the morphism $P_{\mathscr{C}/S}^{\mathrm{sep}}\to P_{\widetilde{\mathscr{C}}/S}^{\mathrm{sep}}$ is smooth. Let $\mathscr{E}_2$ and $\mathscr{E}_3$ be the kernels of the second and third horizontal arrows. Lemmata  \ref{EetalelemII} and \ref{K1seperatedlem} show that $\mathscr{E}_2$ and $\mathscr{E}_3$ are étale over $S.$ By Lemma \ref{K1seperatedlem}, the map $P_{\mathscr{C}/S} \to P_{\widetilde{\mathscr{C}}/S}$ is smooth. Because smoothness is local on the source and target in the smooth topology (\cite{Stacks}, Tags 06F2 and 0429), and because the maps  $P_{\mathscr{C}/S} \to P_{\mathscr{C}/S}^{\mathrm{sep}}$ and $P_{\widetilde{\mathscr{C}}/S} \to P_{\widetilde{\mathscr{C}}/S}^{\mathrm{sep}}$ are surjective by construction, we obtain the desired smoothness of $P_{\mathscr{C}/S}^{\mathrm{sep}} \to P_{\widetilde{\mathscr{C}}/S}^{\mathrm{sep}}.$ Finally, we know from Lemma \ref{localgloballem} together with \cite{BLR}, Chapter 9.5, Theorem 4, that $P_{\widetilde{\mathscr{C}}/S}^{\mathrm{sep}}$ is the Néron model of $\Pic^0_{\widetilde{C}/K}.$ This shows that the bottom sequence in diagram (\ref{PsepCD}) is of type (\ref{310seq3}), concluding Step 4. 
\end{proof}\\
$\mathbf{Remark}.$ Let $C$ be a geometrically reduced proper seminormal curve over $K.$ If each irreducible component of $C$ admits a $K$-rational point regular on $C,$ then the proof above shows that the exact sequence $0\to \mathrm{uni}(\Pic^0_{C/K})\to \Pic^0_{C/K}\to \Pic^0_{\widetilde{C}/K}\to 0$ of $K$-group schemes induces an exact sequence of Néron lft-models. It is well-known that, in general, Néron lft-models behave very badly in exact sequences. It would be very interesting to understand this behaviour in general, and in particular whether the sequence is always exact. 
\section{Semi-factorial models of geometrically integral curves}
Let $S$ be a Dedekind scheme with field of rational functions $K$ and let $C$ be a proper, geometrically reduced curve over $K.$ In the proof of Theorem \ref{ConjIthm} (where $S$ is excellent), we constructed a Néron model of $\Pic^0_{C/K}$ by considering a proper and flat model $\mathscr{C}\to S$ which is cohomologically flat in dimension zero (at least after a finite extension of $K$), considering the scheme $P_{\mathscr{C}/S}^{\mathrm{sep}},$ and then employing a push-out construction. In the case where $C$ is regular, the last step is unnecessary. Hence we shall now investigate under which circumstances the $S$-scheme $P_{\mathscr{C}}^{\mathrm{sep}}$ already is the Néron lft-model of $\Pic^0_{C/K}.$ For nodal curves, a similar question was studied by Orecchia \cite{Ore}. We also investigate the existence of closely related \it semi-factorial models \rm studied by Pépin \cite{Pép}.

\begin{definition}
Let $S$ be a Dedekind scheme with field of rational functions $K.$ A scheme $X\to S$ is \rm semi-factorial \it if the canonical map 
$$\Pic X\to \Pic(X\times_S\Spec K)$$ is surjective.
\end{definition}
This definition can be found in \cite{Pép}, Définition 1.1, where $S$ is assumed to be the spectrum of a discrete valuation ring. We shall also study the following closely related concept: let $C$ be a proper curve over $K$ and let $\mathscr{C}\to S$ be a proper and flat model of $C.$ Let $P_{\mathscr{C}/S}$ be the scheme-theoretic closure\footnote{Note that we can consider these scheme-theoretic closures even if $\Pic_{\mathscr{C}/S}$ is not representable; see \cite{BLR}, p. 265f or \cite{Ray}, (3.2) c).} of $\Pic^0_{C/K},$ let $\mathscr{E}$ be the scheme-theoretic closure of the unit section in $P_{\mathscr{C}/S},$ and put $P_{\mathscr{C}/S}^{\mathrm{sep}}:=P_{\mathscr{C}/S}/\mathscr{E}.$
\begin{definition}
Let $S$ be a Dedekind scheme with field of rational functions $K.$ Let $C$ be a geometrically reduced proper curve over $K.$ A proper and flat model $\mathscr{C}\to S$ is a \rm Néron-Picard model \it of $C$ if the functor $P_{\mathscr{C}/S}^{\mathrm{sep}}$ constructed above is representable and equal to the Néron lft-model of $\Pic^0_{C/K}$ over $S.$ 
\end{definition}
Models of this kind (although not under this name) already appear in \cite{Ore}. If $S$ is excellent and $C$ is geometrically reduced, regular, and each irreducible component of $C$ has a $K$-rational point, then any proper, flat, and regular model $\mathscr{C}\to S$ of $C$ is a Néron-Picard model over $S;$ this follows from Theorem \ref{regmodelexistencetheorem}, Lemma \ref{localgloballem}, and \cite{BLR}, Chapter 9.5, Theorem 4. The main results of this section will be the following: \\
\\
(i) If $S$ is excellent and \it local, \rm then any seminormal proper curve $C$ over $K$ admits a semi-factorial model, and a Néron-Picard model exists if each irreducible component of $C$ has a smooth $K^{\mathrm{sh}}$-rational point, whereas\\
\\
(ii) if $S$ is \it global \rm (i. e., if $S$ has infinitely many closed points) and of finite type over a field, then a geometrically integral proper curve $C$ over $K$ which admits a Néron-Picard model over $S$ is regular. The converse holds if $C$ has a $K$-rational point by Proposition \ref{regularnéronprop}.

\subsection{Semi-factorial models in the local case}
In this paragraph, we shall prove the following result, which partly generalises \cite{Pép}, Théorème 8.1:
\begin{theorem}
Let $S$ be the spectrum of an excellent discrete valuation ring with field of fractions $K.$ Let $C$ be a proper seminormal curve over $K.$ Then $C$ admits a proper, flat, semi-factorial model $\mathscr{C}\to S.$ \label{semfactexthm}
\end{theorem}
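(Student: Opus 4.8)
The plan is to realise $C$ as the generic fibre of a push-out of a regular model of its normalisation, and then to force semi-factoriality by blowing up. Since $C$ is integral and seminormal, its normalisation $\widetilde{C}$ is a regular integral proper curve over $K,$ and by Theorem \ref{FactorisationTheorem} (ii) (which uses only that $C$ be reduced) the morphism $\nu\colon\widetilde{C}\to C$ is the push-out of $\Psi\to C^{\mathrm{sing}}$ along the closed immersion $\Psi\hookrightarrow\widetilde{C},$ where $\Psi=\widetilde{C}\times_C C^{\mathrm{sing}}$ is reduced. First I would choose, by Theorem \ref{regmodelexistencetheorem}, a proper flat regular model $\widetilde{\mathscr{C}}\to S$ of $\widetilde{C};$ no rational point is required here. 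Letting $\widetilde{\Psi}$ and $\mathscr{C}^{\mathrm{sing}}$ be the integral closures of $S$ in $\Psi$ and $C^{\mathrm{sing}}$ (finite over $S$ by excellence), Lemma \ref{singimmlem} furnishes a closed immersion $\widetilde{\Psi}\hookrightarrow\widetilde{\mathscr{C}},$ and the push-out $\mathscr{C}:=\widetilde{\mathscr{C}}\cup_{\widetilde{\Psi}}\mathscr{C}^{\mathrm{sing}}$ exists and is proper and flat over $S$ by Propositions \ref{Schwede}, \ref{pushfiniteProp}, and \ref{Basechangeprop}. The verification of conditions (iv) and (v) of Proposition \ref{Basechangeprop} is exactly as in the proof of Corollary \ref{Cexistcor} and uses only reducedness (not cohomological flatness, which we do not need), and Proposition \ref{Basechangeprop} shows that the generic fibre of $\mathscr{C}$ is $\widetilde{C}\cup_\Psi C^{\mathrm{sing}}=C.$

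To test semi-factoriality I would use Proposition \ref{linebundlesequivprop} to identify $\Pic\mathscr{C}$ with isomorphism classes of triples $(\mathscr{M},\mathscr{N},\lambda),$ with $\mathscr{M}\in\Pic\widetilde{\mathscr{C}},$ $\mathscr{N}\in\Pic\mathscr{C}^{\mathrm{sing}},$ and $\lambda\colon\mathscr{M}|_{\widetilde{\Psi}}\xrightarrow{\sim}\mathscr{N}|_{\widetilde{\Psi}};$ the same description applies over the generic fibre, and restriction to $\mathscr{C}_K=C$ is restriction of triples. As $\mathscr{C}^{\mathrm{sing}}$ and $\widetilde{\Psi}$ are disjoint unions of spectra of semi-local Dedekind rings, their Picard groups vanish, so I may take $\mathscr{N}=\Og.$ Thus, given $L\in\Pic C$ corresponding to $(\overline{L},\Og,\lambda_0),$ the first step is to extend $\overline{L}=\nu^\ast L$ to a line bundle $\widetilde{\mathscr{L}}$ on $\widetilde{\mathscr{C}};$ this is possible because a regular model is semi-factorial (closures of divisors are Cartier on a regular surface). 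It then remains to extend the trivialisation $\lambda_0$ of $\overline{L}|_\Psi$ to a trivialisation of $\widetilde{\mathscr{L}}|_{\widetilde{\Psi}};$ since $\widetilde{\mathscr{L}}|_{\widetilde{\Psi}}$ is trivial, the obstruction is a unit $v\in\Og(\Psi)^\times,$ and $L$ lifts to $\mathscr{C}$ precisely when $v$ lies in the image of $\Og(\widetilde{\Psi})^\times\to\Og(\Psi)^\times.$ This image has cokernel $\bigoplus_P\Z,$ indexed by the closed points $P$ of the special fibre of $\widetilde{\Psi},$ recorded by the orders $\ord_P v.$

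To annihilate this obstruction I would exploit the freedom of twisting $\widetilde{\mathscr{L}}$ by vertical divisors of $\widetilde{\mathscr{C}},$ which leaves $\overline{L}$ unchanged: replacing $\widetilde{\mathscr{L}}$ by $\widetilde{\mathscr{L}}(\sum_P a_P V_P)$ for a vertical prime divisor $V_P$ shifts $\ord_P v$ by the intersection multiplicity $(V_P\cdot\widetilde{\Psi})_P.$ The point is that after finitely many blow-ups of $\widetilde{\mathscr{C}}$ at the points of $\widetilde{\Psi}\cap\widetilde{\mathscr{C}}_s$—an operation which preserves regularity and the generic fibre, and under which $\widetilde{\Psi}$ lifts isomorphically to its strict transform—one can arrange (by a standard separation argument) that each special-fibre point $P$ of $\widetilde{\Psi}$ lie transversally on its own exceptional curve $E_P$ meeting $\widetilde{\Psi}$ only at $P,$ so that $E_P\cdot\widetilde{\Psi}=[P].$ Then the vertical divisors surject onto the valuation lattice $\bigoplus_P\Z,$ and a suitable twist makes $v$ a global unit on $\widetilde{\Psi}.$ Carrying out the push-out construction with this blown-up model yields a proper flat model of $C$ whose Picard group surjects onto $\Pic C,$ which is exactly semi-factoriality.

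The main obstacle is precisely this last step: the naive push-out need not be semi-factorial. For instance, if $\widetilde{\mathscr{C}}=\Ps^1_R$ has irreducible special fibre, every vertical divisor is principal, the intersection pairing with $\widetilde{\Psi}$ vanishes, and a degree-zero line bundle on a nodal generic fibre whose gluing unit has nonzero valuation fails to extend. The blow-up is what enlarges the group of vertical divisors enough to make the pairing with $\widetilde{\Psi}$ surject onto $\bigoplus_P\Z;$ once this is arranged, the diagram chase through Proposition \ref{linebundlesequivprop} is formal. The remaining points—finiteness of $\widetilde{\Psi}$ and $\mathscr{C}^{\mathrm{sing}},$ properness and flatness of the push-out, and the vanishing of the relevant Picard groups—are routine consequences of excellence and the results of Paragraph \ref{pushoputpara}.
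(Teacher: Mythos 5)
Your proposal is correct and takes essentially the same route as the paper: the paper likewise forms the push-out $\widetilde{\mathscr{C}}\cup_{\widetilde{\Psi}}\mathscr{C}^{\mathrm{sing}}$ of a regular model prepared by embedded resolution plus extra blow-ups (Proposition \ref{strongembresprop}), so that each intersection point of $\widetilde{\Psi}$ with the special fibre lies transversally on its own exceptional curve $E_j$, and then kills the gluing obstruction by twisting the extended bundle by $\nu_1E_1+\dots+\nu_nE_n$ before invoking the triple description of Proposition \ref{linebundlesequivprop}. Your obstruction-theoretic phrasing (the unit $v$ and the cokernel $\bigoplus_P\Z$ of $\Og(\widetilde{\Psi})^\times\to\Og(\Psi)^\times$) is just a repackaging of the paper's explicit manipulation of the section $\sigma_\eta$ and the isomorphisms $\hbar$ and $\lambda$.
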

We shall need the following stronger version of the embedded resolution theorem:
\begin{proposition}
Let $K$ and $S$ be as in Theorem \ref{semfactexthm}, and let $s$ be the special point of $S.$ Let $\widetilde{C}$ be a proper regular curve over $K.$ Let $\Psi$ be a reduced effective divisor on $\widetilde{C}.$ Then there exists a proper flat model $\widetilde{\mathscr{C}}\to S$ of $\widetilde{C}$ with $\widetilde{\mathscr{C}}$ regular, and a reduced effective divisor ${\overline{\Psi}}$ on $\widetilde{\mathscr{C}}$ extending $\Psi$ such that\\ 
(i) the divisor ${\overline{\Psi}} + \widetilde{\mathscr{C}}_s$ is supported on a divisor with strict normal crossings, where $\widetilde{\mathscr{C}}_s:=\widetilde{\mathscr{C}}\times _S \Spec \kappa(s),$ and \\
(ii) each irreducible component of $\widetilde{\mathscr{C}}_s$ contains at most one point of intersection of ${\overline{\Psi}}$ with $\widetilde{\mathscr{C}}_s,$ and this remains true after strict Henselization of the base. \label{strongembresprop}
\end{proposition}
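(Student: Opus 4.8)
The plan is to secure condition (i) by a direct appeal to embedded resolution, and then to enforce condition (ii) by a sequence of point blow-ups, each of which moves an excess intersection point onto a freshly created exceptional line.

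First I would fix any regular proper flat model $\widetilde{\mathscr{C}}^{(0)}\to S$ of $\widetilde{C}$, which exists by Theorem \ref{regmodelexistencetheorem}, and let $\widetilde{\Psi}^{(0)}$ be the Zariski closure of $\Psi$ in $\widetilde{\mathscr{C}}^{(0)}$; since $\Psi$ is reduced and horizontal, $\widetilde{\Psi}^{(0)}$ is a reduced horizontal divisor extending $\Psi.$ Applying the embedded resolution theorem (\cite{Liu}, Chapter 9.2, Theorem 2.26) to the reduced divisor $\widetilde{\Psi}^{(0)}\cup \widetilde{\mathscr{C}}^{(0)}_s$ produces a regular model $\widetilde{\mathscr{C}}\to S$, still proper and flat (the modification is an isomorphism over the generic fibre, so the generic fibre remains $\widetilde{C}$), on which the total transform, and hence $\widetilde{\Psi}+\widetilde{\mathscr{C}}_s$, is supported on a strict normal crossings divisor. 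This gives (i). By \cite{Liu}, Chapter 9.2, Remark 2.27, the components of the resulting divisor are regular, so $\widetilde{\Psi}$ meets $\widetilde{\mathscr{C}}_s$ transversally at finitely many closed points $P_1,\ldots, P_m$, each lying on a single component of $\widetilde{\mathscr{C}}_s$ (an intersection point cannot be a node of the fibre, by the strict normal crossings condition).

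To obtain (ii), I would blow up each $P_j.$ The local picture at $P_j$ is that of two regular curves, namely $\widetilde{\Psi}$ and the unique component $\Gamma$ of $\widetilde{\mathscr{C}}_s$ through $P_j$, meeting transversally. Blowing up $P_j$ replaces it by an exceptional divisor $E_j\cong \mathbf{P}^1_{\kappa(P_j)}$; the strict transforms of $\widetilde{\Psi}$ and $\Gamma$ become disjoint, and each meets $E_j$ transversally in a single $\kappa(P_j)$-rational point recording its tangent direction at $P_j.$ This operation preserves the strict normal crossings property, so (i) is maintained, while the intersection of $\widetilde{\Psi}$ with the special fibre that formerly lay on $\Gamma$ is moved onto $E_j.$ Since the $P_j$ are distinct closed points, the blow-ups do not interfere, and after performing all of them the strict transform of $\widetilde{\Psi}$ meets the new special fibre only along the exceptional components $E_1,\ldots, E_m$, whereas the strict transforms of the original components of $\widetilde{\mathscr{C}}_s$ now meet $\widetilde{\Psi}$ nowhere.

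The point that requires care, and which I expect to be the main obstacle, is that condition (ii) is phrased in terms of \emph{geometric} irreducible components, whereas $\kappa(P_j)$ may be a nontrivial and possibly inseparable extension of $\kappa(s).$ I would verify the count after base change to $\overline{\kappa(s)}$: writing $d$ for the separable degree of $\kappa(P_j)/\kappa(s)$, the reduced base change of $E_j$ is a disjoint union of $d$ copies of $\mathbf{P}^1_{\overline{\kappa(s)}}$, and the single $\kappa(P_j)$-rational point $Q\in E_j$ at which the strict transform of $\widetilde{\Psi}$ meets $E_j$ splits into exactly $d$ geometric points, one on each copy. Hence every geometric component of each $E_j$ carries exactly one geometric intersection point, while the geometric components arising from the strict transforms of the original fibre carry none. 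This yields (ii) and completes the construction.
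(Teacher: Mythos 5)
Your proposal is correct and follows essentially the same route as the paper's proof: first apply embedded resolution (\cite{Liu}, Chapter 9.2, Theorem 2.26) to the closure of $\Psi$ plus the special fibre to secure (i), then blow up the reduced intersection $\widetilde{\Psi}\cap(\widetilde{\mathscr{C}}_s)_{\mathrm{red}}$ so that each intersection point lands on its own exceptional line $E_j\cong \mathbf{P}^1_{\kappa(P_j)}$, giving (ii). Your final paragraph verifying the count of \emph{geometric} components after base change to $\overline{\kappa(s)}$ spells out a point the paper leaves implicit, and is a welcome addition.
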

\begin{proof}
Let $\widetilde{\mathscr{C}}$ be a regular proper flat model of $\widetilde{C},$ which exists by Theorem \ref{regmodelexistencetheorem}. Let $D$ be the scheme-theoretic closure of $\Psi$ in $\widetilde{\mathscr{C}}.$ By the embedded resolution theorem (\cite{Liu}, Chapter 9.2, Theorem 2.26), there exists a proper birational morphism $f\colon \widetilde{\mathscr{C}}'\to \widetilde{\mathscr{C}}$ with $\widetilde{\mathscr{C}}'$ regular, such that $f^\ast (D+ \widetilde{\mathscr{C}}_s)$ is supported on a divisor with strict normal crossings. We put ${\overline{\Psi}}:=f^\ast D$ (note that ${\overline{\Psi}}$ is automatically equal to the integral closure of $S$ in $\Psi$). Now replace $\widetilde{\mathscr{C}}$ by $\widetilde{\mathscr{C}}'.$ Note that the scheme-theoretic intersection ${\overline{\Psi}}\cap (\widetilde{\mathscr{C}}_s)_{\mathrm{red}}$ is a reduced zero-dimensional scheme, and hence regular. Finally, we replace $\widetilde{\mathscr{C}}$ by $\mathrm{Bl}_{{\overline{\Psi}}\cap (\widetilde{\mathscr{C}}_s)_{\mathrm{red}}} \widetilde{\mathscr{C}}.$ Clearly, this is still a regular, proper, and flat model of $\widetilde{C}.$ Moreover, the strict transform of ${\overline{\Psi}}$ is equal to ${\overline{\Psi}}$ because this scheme is regular. Now write
$${\overline{\Psi}}\cap (\widetilde{\mathscr{C}}_s)_{\mathrm{red}}=\{x_1,..., x_n\}$$ for closed points $x_1,..., x_n$ of $\widetilde{\mathscr{C}}.$ Then, by construction, each $x_j$ lies on its own irreducible component $E_j\cong \Ps^1_{\kappa(x_j)}$ of $(\widetilde{\mathscr{C}}_s)_{\mathrm{red}}.$ Hence claim (ii) follows as well.
\end{proof}\\
We can now construct semi-factorial models over excellent discrete valuation rings for arbitrary proper integral seminormal curves: \\
\\
\it Proof of Theorem \ref{semfactexthm}. \rm Throughout this proof, a subscript $\eta$ applied to an object defined over $S$ will denote restriction to the generic fibre. Let $C$ be as in Theorem \ref{semfactexthm} and let $\widetilde{C}$ be the normalisation of $C.$ Let $\Psi$ be the scheme-theoretic pre-image of $C^{\mathrm{sing}}$ in $\widetilde{C}$ (as before, $C^{\mathrm{sing}}$ is the singular locus of $C$ endowed with its reduced subscheme structure). By Theorem \ref{FactorisationTheorem} (ii), $\Psi$ is a reduced effective divisor on $\widetilde{C}.$ Let $\widetilde{\mathscr{C}}$ be the model of $\widetilde{C}$ from Proposition \ref{strongembresprop}. Let $s$ be the special point of $S$ and let $\widetilde{\mathscr{C}}_s$ be the special fibre of $\widetilde{\mathscr{C}}.$ Let $x_1,..., x_n$ be closed points on $\widetilde{\mathscr{C}}$ such that 
$${\overline{\Psi}} \cap (\widetilde{\mathscr{C}}_s)_{\mathrm{red}}=\{x_1,..., x_n\}.$$ In particular, the $x_j$ correspond to maximal ideals $\mathfrak{m}_1,..., \mathfrak{m}_n$ of $\overline{\Psi}.$ For each $j=1,..., n,$ let $E_j$ be the unique irreducible component of $\widetilde{\mathscr{C}}_s$ (endowed with the reduced subscheme structure) on which $x_j$ lies. Let $\mathscr{C}^{\mathrm{sing}}$ be the integral closure of $S$ in $C^{\mathrm{sing}},$ and let $\xi\colon {\overline{\Psi}}\to \mathscr{C}^{\mathrm{sing}}$ be the canonical map. Let $\widetilde{\iota}\colon {\overline{\Psi}}\to \widetilde{\mathscr{C}}$ be the closed immersion. We claim that 
$$\mathscr{C}:=\widetilde{\mathscr{C}}\cup_{{\overline{\Psi}}}\mathscr{C}^{\mathrm{sing}}$$
is a semi-factorial model of $C.$ To see this, recall that a line bundle $\mathscr{L}_\eta$ on $C$ is the same as a triple $(\widetilde{\mathscr{L}}_{\mathscr{\eta}}, \mathscr{N}_{\eta},\lambda_\eta),$ where $\widetilde{\mathscr{L}}_{\eta}$ and $\mathscr{N}_{\eta}$ are line bundles on $\widetilde{C}$ and ${C}^{\mathrm{sing}},$ respectively, and 
$$\lambda_\eta \colon \widetilde{\iota}_\eta^\ast \widetilde{\mathscr{L}}_\eta \to \xi_\eta^\ast \mathscr{N}_\eta$$ is an isomorphism (see Proposition \ref{linebundlesequivprop}). A similar description holds for line bundles on $\mathscr{C}.$ \\
Let $\mathscr{L}_\eta$ be a line bundle on $C.$ Let $\psi_\eta\colon \widetilde{{C}}\to {C}$ be the normalisation morphism, and let $\iota\colon C^{\mathrm{sing}}\to C$ be the canonical closed immersion. Because $\widetilde{\mathscr{C}}$ is regular, we can extend $\psi_\eta ^ \ast \mathscr{L}_\eta$ to a line bundle $\widetilde{\mathscr{L}}$ on $\widetilde{\mathscr{C}}.$ Let $\sigma_\eta$ be a no-where vanishing global section of $\iota_\eta^\ast \mathscr{L}_\eta.$ We may choose integers $\nu_1, ..., \nu_n$ such that $\psi_\eta^\ast \sigma_\eta$ extends to a no-where vanishing global section of
$$\widetilde{\iota}^\ast \widetilde{\mathscr{L}}\otimes_{\overline{\Psi}} (\mathfrak{m}_1^{-\nu_1} \cdot ... \cdot \mathfrak{m}_n^{-\nu_n}) = \widetilde{\iota}^\ast \widetilde{\mathscr{L}}(\nu_1 E_1 + ... + \nu_nE_n).$$
This no-where vanishing global section defines an isomorphism 
$$\lambda\colon \widetilde{\iota}^\ast \widetilde{\mathscr{L}}(\nu_1 E_1 + ... + \nu_nE_n) \to \Og_{\overline{\Psi}} = \xi^\ast \Og_{\mathscr{C}^{\mathrm{sing}}}.$$ In particular, if $\mathscr{L}$ is the line bundle on $\mathscr{C}$ corresponding to the triple $$(\widetilde{\mathscr{L}}(\nu_1 E_1 + ... + \nu_nE_n), \Og_{\mathscr{C}^{\mathrm{sing}}}, \lambda)$$ according to Proposition \ref{linebundlesequivprop}, then $\mathscr{L}$ extends $\mathscr{L}_\eta,$ as claimed. 
\qed
\subsection{Néron-Picard models in the local case}
We keep the notation from the previous Paragraph. If $K$ is the field of fractions of a discrete valuation ring $R,$ we denote by $K^{\mathrm{sh}}$ the field of fractions of the strict Henselisation $R^{\mathrm{sh}}$ of $R$ with respect to a separable closure of the residue field of $R$ (see \cite{Stacks}, Tag 0BSK). We shall now prove
\begin{theorem}
Let $S$ be the spectrum of an excellent discrete valuation ring with field of fractions $K.$ Let $C$ be a proper geometrically reduced seminormal curve such that each irreducible component of $C$ contains an element of $C^{\mathrm{sm}}(K^{\mathrm{sh}}).$ Then $C$ admits a Néron-Picard model $\mathscr{C}\to S.$ \label{NérPicexthm} Moreover, the model we construct remains a Néron-Picard model after any essentially smooth or ind-étale extension of $\Gamma(S, \Og_S)$ of discrete valuation rings. 
\end{theorem}
We need the following technical
\begin{lemma}
Let $(X_i)_{i\in I}$ be a directed system of Noetherian schemes with étale affine transition maps. Let $0\in I$ be an element and let $D\subseteq X_0$ be a divisor with strict normal crossings. Let $X:=\varprojlim X_i$, assume that $X$ is Noetherian, and let $\pi_0\colon \varprojlim X_i \to X_0$ be the projection morphism. Then $\pi_0^\ast D$ is a divisor with strict normal crossings on $X.$ \label{limitlem}
\end{lemma}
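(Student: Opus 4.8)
The plan is to reduce the statement to a pointwise computation in local rings, using that strict normal crossings is, by \cite{Stacks}, Tag 0BI9, a condition checkable at each point: an effective Cartier divisor $E$ on a locally Noetherian scheme $Y$ has strict normal crossings precisely when, for every $y$ in the support of $E$, the local ring $\Og_{Y,y}$ is regular and a local equation of $E$ at $y$ is a product $t_1\cdots t_r$ of $r$ distinct members of a regular system of parameters $t_1,\dots,t_d$ of $\Og_{Y,y}$. Since $\pi_0$ is flat (being a limit of the étale, hence flat, transition morphisms $X_i\to X_0$), the pullback $\pi_0^\ast D$ is again an effective Cartier divisor whose support is the preimage of the support of $D$. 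Thus it suffices to verify the criterion at each $x\in X$ whose image $x_0:=\pi_0(x)$ lies on $D$.

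First I would analyse the local homomorphism $\Og_{X_0,x_0}\to \Og_{X,x}$. Because the transition maps are affine, $\Og_{X,x}=\varinjlim_{i\geq 0}\Og_{X_i,x_i}$, where $x_i$ is the image of $x$ in $X_i$; this is a filtered colimit of étale local homomorphisms. Hence $\Og_{X_0,x_0}\to\Og_{X,x}$ is flat, and since each étale local homomorphism is unramified — so that the maximal ideal of the source generates that of the target — passing to the colimit yields $\mathfrak{m}_{x_0}\Og_{X,x}=\mathfrak{m}_x$. In particular the closed fibre of the homomorphism is the field $\kappa(x)$, so it has relative dimension zero; by the dimension formula for flat local homomorphisms $\dim\Og_{X,x}=\dim\Og_{X_0,x_0}=d$, and the induced map of cotangent spaces $\mathfrak{m}_{x_0}/\mathfrak{m}_{x_0}^2\otimes_{\kappa(x_0)}\kappa(x)\to \mathfrak{m}_x/\mathfrak{m}_x^2$ is surjective.

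From here the conclusion is quick. Since $\Og_{X_0,x_0}$ is regular (because $x_0\in D$ and $D$ is strict normal crossings) and $\Og_{X_0,x_0}\to\Og_{X,x}$ is a flat local homomorphism of Noetherian local rings (here I use the hypothesis that $X$ be Noetherian) whose closed fibre $\kappa(x)$ is a field, the standard permanence criterion for regularity along flat local homomorphisms shows $\Og_{X,x}$ is regular, of dimension $d$. The surjective cotangent map above is then a surjection between $\kappa(x)$-spaces of equal dimension $d$, hence an isomorphism, so the images $\bar t_1,\dots,\bar t_d$ of a regular system of parameters $t_1,\dots,t_d$ of $\Og_{X_0,x_0}$ form a regular system of parameters of $\Og_{X,x}$ and remain pairwise distinct. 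Choosing $t_1,\dots,t_d$ adapted to $D$, so that a local equation of $D$ at $x_0$ is $t_1\cdots t_r$, its image $\bar t_1\cdots\bar t_r$ is a local equation of $\pi_0^\ast D$ at $x$ and is a product of $r$ distinct members of a regular system of parameters. This is exactly the strict normal crossings condition at $x$.

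I expect the only genuine subtlety to lie in the limit bookkeeping of the second paragraph: identifying $\Og_{X,x}$ with the filtered colimit $\varinjlim_i\Og_{X_i,x_i}$ and checking that the unramifiedness relation $\mathfrak{m}_{x_0}\Og_{X,x}=\mathfrak{m}_x$ survives passage to the colimit, together with the (essential) use of the Noetherian hypothesis on $X$ to invoke the regularity criterion. Everything else is formal once $\Og_{X_0,x_0}\to \Og_{X,x}$ is recognised as a flat local homomorphism with field closed fibre.
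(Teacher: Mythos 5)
Your proof is correct, and its overall skeleton matches the paper's: both arguments work pointwise at a point $x$ of $\pi_0^\ast D$, identify $\Og_{X,x}$ with the filtered colimit $\varinjlim_i \Og_{X_i,\pi_i(x)}$, use that the \'etale transition maps force a regular system of parameters $t_1,\dots,t_d$ at $\pi_0(x)$ (chosen so that $D$ is cut out by $t_1\cdots t_r$) to generate the maximal ideal at every level and hence in the colimit, and conclude that $\pi_0^\ast D$ is cut out by the image of $t_1\cdots t_r$. The one genuine divergence is how regularity of $\Og_{X,x}$ is established. The paper argues by hand: it exhibits the chain $0\subset \langle t_1\rangle\subset\cdots\subset\langle t_1,\dots,t_d\rangle$ of prime ideals in $\Og_{X,x}$, giving $\dim \Og_{X,x}\geq d$, and combines this with Krull's height theorem (the maximal ideal is generated by $d$ elements) to get regularity; note this implicitly requires checking that each $\langle t_1,\dots,t_j\rangle$ really is prime in the colimit, which holds because the corresponding quotients are regular local (hence domains) at every finite level and a filtered colimit of domains along injective maps is a domain -- a point the paper passes over in silence. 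You instead observe that $\Og_{X_0,x_0}\to\Og_{X,x}$ is a flat local homomorphism of Noetherian local rings with closed fibre the field $\kappa(x)$, and invoke the standard permanence theorem (regular base plus regular fibre implies regular total ring), then recover the regular system of parameters from the surjection of cotangent spaces. Your route leans on a heavier standard theorem but cleanly sidesteps the primality bookkeeping; the paper's is more elementary and self-contained. Both uses of the Noetherian hypothesis on $X$ are the same (it is what makes the dimension-theoretic arguments in $\Og_{X,x}$ legitimate), so nothing is lost either way.
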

\begin{proof}
We may assume without loss of generality that $i\geq 0$ for all $i\in I.$ For $i\geq j$ in $I,$ let $\tau_{ij}\colon X_i\to X_j$ the transition map. Let $x\in \pi_0^\ast D.$ For each $i\in I,$ let $\pi_i\colon X\to X_{i}$ be the projection. First observe that we have 
$$\Og_{X,x}=\varinjlim \Og_{X_i, \pi_i(x)}.$$ Now choose a regular system of parameters $x_1,..., x_d$ such that $D$ is cut out by $x_1\cdot...\cdot x_r$ in $\Spec \Og_{X_0, \pi_0(x)}$ for some $1\leq r\leq d.$ For each $i\in I,$ let $\mathfrak{m}_i$ denote the maximal ideal of $\Og_{X_i, \pi_i(x)}.$ Note that the $x_1,..., x_d$ generate $\mathfrak{m}_i$ for all $i$  because the transition maps are étale. By considering the chain $0\subset \langle x_1 \rangle \subset \langle x_1, x_2,\rangle \subset ... \subset \langle x_1,..., x_d \rangle$ of prime ideals in $\Og_{X,x},$ we see that $\Og_{X,x}$ is regular. In particular, $x_1,..., x_d$ is a regular regular system of parameters in $\Og_{X,x}$, and $\pi_0^\ast D$ is cut out by $x_1\cdot...\cdot x_r.$ Hence the claim follows.  
\end{proof}\\
\it Proof of Theorem \ref{NérPicexthm}. \rm We construct a model $\mathscr{C}\to S$ as in the proof of Theorem \ref{semfactexthm}, and we shall use the notation introduced there without introducing it again. Observe that we may assume without loss of generality that $S$ is strictly Henselian. First, we show that $\mathscr{C}\to S$ is cohomologically flat in dimension zero. Since $\mathscr{C}=\widetilde{\mathscr{C}}\cup_{{\overline{\Psi}}}\mathscr{C}^{\mathrm{sing}},$ we already know that $\mathscr{C}$ is proper and flat over $S.$ Moreover, $\widetilde{\mathscr{C}}\to S$ is cohomologically flat in dimension zero by \cite{Liu}, Chapter 9.1, Corollary 1.24. 
Indeed, we may suppose that $\widetilde{\mathscr{C}}$ is irreducible. Then we know that of $\widetilde{\mathscr{C}}$ admits a section. In particular, $\widetilde{\mathscr{C}}\times_S\Spec \kappa(s)$ has a smooth $\kappa(s)$-point. This implies that $\widetilde{\mathscr{C}}\times_S\Spec \kappa(s)$ has an irreducible component of geometric multiplicity 1. Therefore $\mathscr{C}$ is cohomologically flat in dimension zero over $S$ by Lemma \ref{cohomflatlem}. This means that $P_{\mathscr{C}/S}^{\mathrm{sep}}$ is scheme which is smooth and locally of finite presentation over $S.$\\
To show that $P_{\mathscr{C}/S}^{\mathrm{sep}}$ is the Néron model of $\Pic^0_{C/K},$ it suffices to show the following (Proposition \ref{strongcritprop}): for each discrete valuation ring $R$ which is essentially smooth over $\Gamma(S, \Og_S)$ and any strict Henselisation $R^{\mathrm{sh}}$ of $R$, the morphism 
$$\Pic_{\mathscr{C}/S}(R^{\mathrm{sh}}) \to \Pic_{\mathscr{C}/S}(F^{\mathrm{sh}})$$ is surjective, where $F^{\mathrm{sh}}:=\mathrm{Frac}\, R^{\mathrm{sh}}.$ Choose such an $R$ and let $S':=\Spec R^{\mathrm{sh}}.$ Then $\widetilde{\mathscr{C}}\times_SS'\to S'$ still satisfies the conditions (i) and (ii) from Proposition \ref{strongembresprop}. This follows from \cite{Stacks}, Tag 0CBP together with Lemma \ref{limitlem}. Since $S'$ is strictly Henselian, the map 
$$\Pic(\mathscr{C}\times_SS')\to \Pic_{\mathscr{C}/S}(R^{\mathrm{sh}})$$ is an isomorphism. The map $\Pic(\mathscr{C}\times_S\Spec F^{\mathrm{sh}})\to \Pic_{\mathscr{C}/S}(F^{\mathrm{sh}})$ is surjective because of our condition on $K^{\mathrm{sh}}$-points. The proof of Theorem \ref{semfactexthm} shows that the canonical map 
$$\Pic(\mathscr{C}\times_SS')\to\Pic(\mathscr{C}\times_S\Spec F^{\mathrm{sh}})$$ is surjective, so our claim follows. \qed\\
\\
$\mathbf{Example}.$ Let $K$ be the field of fractions of a discrete valuation ring $R.$ Let us construct a model of the curve $C$ given by identifying two distinct $K$-points of $\Ps^1_K.$ This curve is isomorphic to the push-out of $\Ps^1_K$ along the map $\Spec K\sqcup \Spec K\to \Spec K$ along the closed immersion $\Spec K\sqcup \Spec K\to \Ps^1$ whose image are $[1:0]$ and $[0:1].$ We choose the canonical model $\Ps^1_R$ of $\Ps^1_K.$ We can extend the map $\Spec K\sqcup \Spec K\to \Ps^1$ to a closed immersion 
$$\Spec R\sqcup \Spec R \to \Ps^1_R.$$ Let $\mathscr{C}$ be the model of $C$ obtained by the push-out
$$\begin{CD}
\Ps^1_R@>>>\mathscr{C}\\
@AAA@AAA\\
\Spec R\sqcup \Spec R@>>>\Spec R.
\end{CD}$$
Now condition (i) from Proposition \ref{strongembresprop} is already satisfied (otherwise we would have to blow up points on the special fibre to obtain a divisor with strict normal crossings). However, condition (ii) is not satisfied. Proposition \ref{exactsequenceProp} gives us an exact sequence
$$0\to {\mathbf{G}_{\mathrm{m}, R}} \to \Pic_{\mathscr{C}/R}\to \Pic_{\Ps^1_{R}/R}=\Z\to 0,$$ which induces an isomorphism
$$P_{\mathscr{C}/\Spec R}\cong {\mathbf{G}_{\mathrm{m}, R}}.$$ In particular, $\mathscr{C}$ is not Néron-Picard. 
Now we proceed as described above: let $\widetilde{\mathscr{C}}$ be the model of $\Ps^1_K$ obtained by blowing up the north and south pole of the special fibre of $\Ps^1_R.$ Let $\mathscr{C}'$ be the model obtained by the push-out
$$\begin{CD}
\widetilde{\mathscr{C}}@>>>\mathscr{C}'\\
@AAA@AAA\\
\Spec R\sqcup \Spec R@>>>\Spec R.
\end{CD}$$
Observe that $P_{\widetilde{\mathscr{C}}/\Spec R}$ is étale over $S$ (it is trivial generically, so this follows from \cite{Ray}, Proposition 3.3.5). Using the snake lemma, we derive an exact sequence
$$0\to {\mathbf{G}_{\mathrm{m}, R}}\to P^{\mathrm{sep}}_{\mathscr{C}'/\Spec R}\to \mathscr{Q}\to 0$$ over $R,$ where $\mathscr{Q}$ is a quotient of $P_{\widetilde{\mathscr{C}}/\Spec R},$ and hence étale over $R$. (We know from Theorem \ref{NérPicexthm} and its proof that $P^{\mathrm{sep}}_{\mathscr{C}'/\Spec R}$ is the Néron lft-model of its generic fibre, so we see that $\mathscr{Q}\cong i_\ast \Z,$ where $i$ is the inclusion of the special point of $\Spec R.$) This is exactly what we expect form the Néron lft-model of ${\mathbf{G}_{\mathrm{m}, K}}.$ This example illustrates that it is precisely the additional non-separatedness of $P_{\widetilde{\mathscr{C}}/S}$ which makes this construction possible. \\
\\
$\mathbf{Remark}.$ Let $R$ be a discrete valuation ring with field of fractions $K$ and let $\mathscr{C}\to S:=\Spec R$ be a proper and flat morphism whose fibres are \it nodal curves with split singularities \rm (\cite{Ore}, Definitions 1.1 and 1.2). Orecchia \cite{Ore} studied the question when $\mathscr{C}$ is a Néron-Picard model of its generic fibre. Basically, his result (as stated in \cite{Ore}) can be paraphrased as follows: we let $\Gamma$ be the dual graph of the special fibre of $\mathscr{C}\to S.$ We consider the \it labelled graph $(\Gamma, l)$ of \rm $\mathscr{C}\to S,$ where each edge of $\Gamma$ is labelled by the \it thickness \rm of the corresponding singularity of the special fibre (see \cite{Ore}, Definition 6.1). The thickness measures \it how singular \rm a singularity of the special fibre is when considered as a point of $\mathscr{C}.$ This is an element of $\N\cup \{\infty\}$ which is equal to 1 if and only if  the corresponding point of $\mathscr{C}$ is regular, and equal to $\infty$ if and only if the corresponding point on $\mathscr{C}$ is a specialisation of a node on the generic fibre of $\mathscr{C}.$ We call $(\Gamma, l)$ \it circuit-coprime \rm if the labels appearing in any circuit in the resulting graph have no common prime divisor (see the Definition in \cite{OreII} which replaces \cite{Ore}, Definition 5.19). Now Theorem 7.6 of \cite{Ore} states that $\mathscr{C}\to S$ is a Néron-Picard model of its generic fibre if and only if $(\Gamma, l)$ is circuit-coprime (this is true as originally stated with the new definition of circuit-coprimality from \cite{OreII}). \\
The example from above illustrates this result. Indeed, let $\mathscr{C}$ be the model of $C$ from the Example above (constructed in the first push-out diagram). Then we have an  
isomorphism $P_{\mathscr{C}/R}\cong {\mathbf{G}_{\mathrm{m}, R}},$ which is not the Néron lft-model of its generic fibre. This is explained by the fact that the labelled graph associated with this model is not circuit-coprime. The labelled graph associated with the second model $\mathscr{C}'\to \Spec R$ we constructed above is circuit-coprime, as can be easily calculated.  

\subsection{The global case}
Now let $S$ be a regular connected algebraic curve over a field $\kappa.$ Let $K$ be the field of rational functions of $S.$
\begin{proposition}
Let $C$ be a proper, geometrically integral curve over $K$ and suppose that $C$ admits a Néron-Picard model $\mathscr{C}\to S.$ Then $C$ is regular. 
\end{proposition}
\begin{proof}
Because $C$ is projective over $K$ (\cite{Stacks}, Tag 0A26), we can find a dense open subset of $S$ above which $\mathscr{C}$ is projective. We replace $S$ by that dense open subset and assume that $\mathscr{C}\to S$ is a projective morphism. Shrinking $S$ further, we may assume that the morphism $\mathscr{C}\to S$ has geometrically integral fibres. Then \cite{BLR}, Chapter 9.3, Theorem 1 tells us that $P_{\mathscr{C}}$ is separated (i. e., $\mathscr{E}=0$) and has connected fibres. By \cite{BLR}, Chapter 10.1, Corollary 10, $\Pic^0_{C/K}$ admits a Néron model (of finite type), so we must have $\mathrm{uni}(\Pic^0_{C/K})=0$ (\cite{BLR}, Chapter 10.3, Theorem 5; this is where we use that $S$ is a curve over a field). By Corollary \ref{Jacstructcor}, the Jacobian of $C$ is isomorphic to that of its normalisation. Because $C$ is geometrically integral, this implies that $C$ is regular (otherwise the morphism $\Pic^0_{C/K} \to \Pic^0_{\widetilde{C}/K}$ would have non-trivial kernel). 
\end{proof}\\
$\mathbf{Remark}.$ In the light of this Proposition, it seems reasonable to expect that Néron-Picard models exist over a global Dedekind scheme only for regular curves, at least in the geometrically integral case. On the other hand, the existence of semi-factorial models in the global case appears to be a much more delicate problem, and it would be very interesting to gain some insight in this regard. For example, it does not even seem to be clear whether any singular curves admit semi-factorial models in the global case. 
\subsection{Some open questions}
Finally, let us mention a few more questions which this article leaves open. First of all, we have proven Conjecture I and Conjecture II from \cite{BLR}, Chapter 10.3 only for Jacobians of \it geometrically reduced \rm curves. However, both Conjectures make claims for general smooth group schemes of finite type over fields. While the present proof is confined to the world of Jacobians for obvious reasons, the assumption that $C$ is geometrically reduced could very well be an artefact of our proof, and it would be fascinating to know whether this condition can be removed within the boundaries of the present methods. It should be noted, however, that such a generalisation would most probably not be straightforward, as the connection between Picard functors and Néron models so-far seems to require that the curves be geometrically reduced, even in the regular case (see \cite{BLR}, Chapter 9.5, Theorem 4).  \\
For example, we proved that if $\widetilde{C}$ is a geometrically reduced \it regular \rm curve over a field $\kappa,$ then $\mathrm{uni}(\Pic^0_{\widetilde{C}/\kappa})=0$ (see Proposition \ref{unitrivialprop}). The proof of this Proposition which we gave uses that $\widetilde{C}$ is geometrically reduced in an essential way. All attempts to resolve this problem so far ended up involving very difficult problems about Brauer groups over imperfect fields. It would already be interesting to know the answer to\\
\\
$\mathbf{Question\ 1}.$ \it Let $\widetilde{C}$ be a (not necessarily geometrically reduced) regular proper curve over a field $\kappa.$ Is it true that $\mathrm{uni}(\Pic^0_{\widetilde{C}/\kappa})=0?$ If not, is it true that $\mathscr{R}_{us, \kappa}(\Pic^0_{\widetilde{C}/\kappa})=0?$ \rm\\
\\
One can reduce this question to the case where $\Gamma(\widetilde{C}, \Og_{\widetilde{C}})=\kappa$ in a relatively straightforward manner, but beyond that, almost nothing seems to be known. It should also be noted that Conjecture I and Conjecture II quoted above do not seem to be known in general for Jacobians of regular curves.
Moreover, we have seen that semi-factorial models and Néron-Picard models exist in the local case for a rather large class of curves, whereas the situation is much less clear for global bases. This leads to\\
\\
$\mathbf{Question\ 2}.$ \it Let $S$ be a Dedekind scheme and let $C$ be a geometrically integral proper curve over $K.$\\
(i) Suppose that $S$ is global and that $C$ admits a Néron-Picard model over $S.$ Does it follow that $C$ is regular? \\
(ii) Suppose that $C$ admits a semi-factorial model over $S.$ If $S$ is local, does that imply that $C$ is seminormal? If $S$ is global, does it follow that $C$ is regular?
\rm

\end{document}